\newcommand{\g}[1]{\mathfrak{#1}}
\newcommand{\ms}[1]{\mathscr{#1}}
\newcommand{\wt}[1]{\widetilde{#1}}
\newcommand{\wh}[1]{\widehat{#1}}
\newcommand{\op}[1]{\operatorname{#1}}
\theoremstyle{plain}
\newtheorem{theorem}{Theorem}[section]
\newtheorem*{main_theorem}{Main Theorem}
\newtheorem{lemma}[theorem]{Lemma}
\newtheorem{corollary}[theorem]{Corollary}
\newtheorem{proposition}[theorem]{Proposition}
\theoremstyle{definition}
\newtheorem{notation}[theorem]{Notation}
\newtheorem{example}[theorem]{Example}
\newtheorem{remark}[theorem]{Remark}
\begin{document}

\title{Homogeneous Interpolation and Some Continued Fractions}
\author{Ivan Petrakiev}
\address{Knight Capital Group, Jersey City, NJ}
\email{ivan.petrakiev@gmail.com}
\date{Nov 27, 2012}
\subjclass[2010]{Primary 14C20; Secondary 14J26, 14N05.}
\begin {abstract} We prove: if $d/m < 2280/721$, there is no curve of degree $d$ passing through $n=10$ general points with multiplicity $m$ in ${\bf P}^2$. Similar results are given for other special values of $n$. Our bounds can be naturally written as certain palindromic continued fractions.
\end{abstract}

\maketitle

%\begin{flushright}
%{\it ``Some men interpret nine memos.''}
%\end{flushright}

\section{Introduction}\label{sec_intro}

Denote by $\ms{L}(d,n,m)$ the linear system of degree $d$ curves in ${\bf P}^2$ passing through $n$ general points $\ms{P}_1,\dots,\ms{P}_n$ with multiplicity at least $m$. For $n \geq 9$, Nagata's conjecture (\cite{N}) predicts that $\ms{L}(d,n,m)$ is empty if $d < \sqrt{n} m$. The statement is clear if $n=k^2$ is a square, but remains widely open otherwise. A refined conjecture due to Harbourne-Hirschowitz further predicts that in fact $\ms{L}(d,n,m)$ is {\it non-special}, i.e. of expected dimension $\op{max}\{-1,v\}$, where $$v = d(d+3)/2-nm(m+1)/2$$ is the {\it virtual dimension} of $\ms{L}(d,n,m)$. We refer to (\cite{CD},\cite{CM1},\cite{CM2},\cite{E}) for background on the problem and some recent results.

In this paper we prove:

\begin{main_theorem}\label{thmB} Let $n$ be a non-square positive integer. Write $n=k^2+\alpha$ with $k = \lfloor \sqrt{n} \rfloor$. Assume that either:
\begin{enumerate}[i)] 
\item $n = 8, 10,12$, or
\item $k \geq 3$, $\alpha$ is even, $\alpha \mid 2n$.
\end{enumerate}
If the linear system $\ms{L}(d,n,m)$ is nonempty, then $d/m \geq c^{(2)}_n$, where
%$$c^{(2)}_n = k + \dfrac {\alpha} {2k + \dfrac {\alpha} { 2k + \dfrac { \alpha } { 2k + \dfrac \alpha k} } }.$$
$$c^{(2)}_n = k + \dfrac 1 {(2k/\alpha) + \dfrac {1\ \ } { 2k + \dfrac {\ \ 1} { (2k/\alpha) + \dfrac 1 k} } }.$$
\end{main_theorem}
Note that $c^{(2)}_n$ is a palindromic continued fraction with rational coefficients. The value $c^{(2)}_8 = \frac{48}{17}$ is well-known to be sharp (\cite{N}). The next few cases are $c^{(2)}_{10} = \frac{2280}{721}$, $c^{(2)}_{11}=\frac{660}{199}$, $c^{(2)}_{12}=\frac{336}{97}$, $c^{(2)}_{15}=\frac{120}{31}$ and $c^{(2)}_{18} = \frac{2448}{577}$. Since $\sqrt{10} - c^{(2)}_{10} \approx 3 \cdot 10^{-6}$, our bound for ten points is stronger than the bound $\frac{117}{37}$ obtained by Eckl (\cite{E}) and Ciliberto {\it et. al.} (\cite{CD}).

In the case $n=10,11,12$ we have a more refined result (Prop. \ref{prop_refinement}). As a striking application, we are able show that the linear system $\ms{L}(1499,10,474)$ is non-special (with $v=-1$).

The proof of the Theorem consists of two degenerations. First, we specialize the $n$ general points $\ms{P}_i$ in ${\bf P}^2$ to general points $P_i$ on a fixed curve $C$ of degree $k$. The problem is naturally reduced to an interpolation problem on a ruled surface $S= {\bf P}(\ms{E})$ where $\ms{E}$ is a semistable rank 2 vector bundle of degree $\alpha$ on $C$. Second, we specialize the $n$ points in $S$ to a curve $\Gamma$ of self-intersection $0$ (in general, this step requires a deformation of the underlying surface $S$). The methods in this paper extend our previous attempt in \cite{PE}. We were greatly influenced by the work of Ciliberto-Miranda in \cite{CM2}.

The paper is organized as follows. In Section \ref{sec_basic} we introduce a Basic Lemma that will be useful throughout the paper. In Section \ref{sec_ruled} we give background on ruled surfaces and elementary transforms. In Section \ref{sec_first} we perform the first degeneration and obtain a certain weak bound on $d/m$ for any $n \geq 9$. The construction is formalized in the next two sections. In Section \ref{sec_main} we sketch the proof of the Main Theorem. In each subsequent section we verify the theorem for specific values of $n$. In Section \ref{sec_refinement} we prove a certain refinement of the Main Theorem. In Appendix \ref{appendix_A} we review the indecomposable elliptic ruled surface of degree 1. Appendix \ref{appendix_B} has some auxiliary results on continued fractions.

\subsection*{Notation and Conventions} We work over $\mathbb C$. Following EGA IV.4, for given a subscheme $Y \subset X$ we denote by $\ms{N}_{Y/X} \cong \ms{I}_Y /\ms{I}_Y^2$ the {\it conormal} sheaf of $Y$. For {\it any} coherent sheaf $\ms{F}$ on $X$, we denote ${\bf P}(\ms{F}) = \op{Proj}(\oplus_{\mu \geq 0} \op{Sym}^\mu \ms{F})$.

\subsection*{Acknowledgments} The author is grateful to E. Cotterill for valuable comments and suggestions for improving the paper.

\section{Basic Lemma}\label{sec_basic}

The following elementary lemma is the key ingredient to several arguments in this paper.

\begin{lemma}[Basic Lemma]\label{basic_lemma} Let $C$ be a nonsingular curve embedded in a nonsingular projective variety $X$. Let $D$ be an effective divisor on $X$. Denote $\mu = \op{mult}_{C/X}(D)$, the multiplicity of vanishing of $D$ along $C$. Then, there is a natural injective morphism of sheaves $\ms{O}_{C}(-D) \hookrightarrow \op{Sym}^\mu \ms{N}_{C/X}$, where $\ms{N}_{C/X}$ is the conormal bundle of $C$ in $X$. 
\end{lemma}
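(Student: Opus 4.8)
The plan is to produce the map as the ``leading term'' (symbol) of a local equation of $D$ along $C$, using the standard identification of the associated graded of the $\mathscr{I}_C$-adic filtration with the symmetric algebra of the conormal bundle. First I would record that, since $C$ is a nonsingular subvariety of a nonsingular variety, it is regularly embedded; hence $\mathscr{N}_{C/X} = \mathscr{I}_C/\mathscr{I}_C^2$ is locally free, and the canonical surjection assembles into an isomorphism of graded $\mathscr{O}_C$-algebras $\operatorname{Sym}^\bullet \mathscr{N}_{C/X} \xrightarrow{\ \sim\ } \bigoplus_{j\ge 0}\mathscr{I}_C^{\,j}/\mathscr{I}_C^{\,j+1}$; in particular $\mathscr{I}_C^{\,\mu}/\mathscr{I}_C^{\,\mu+1}\cong\operatorname{Sym}^\mu\mathscr{N}_{C/X}$. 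Locally one may pick a regular system of parameters $x_1,\dots,x_c$ cutting out $C$, so that this graded ring is a polynomial ring over $\mathscr{O}_C$; consequently each power $\mathscr{I}_C^{\,\mu}$ is $\mathscr{I}_C$-primary.

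Next, since $D$ is effective, $\mathscr{O}_X(-D)\subset\mathscr{O}_X$ is the ideal sheaf of $D$, invertible and locally generated by an equation $f$ of $D$. By definition of $\mu=\operatorname{mult}_{C/X}(D)$, the section $f$ vanishes to order exactly $\mu$ at the generic point $\eta$ of $C$, i.e.\ $f\in\mathscr{I}_{C,\eta}^{\,\mu}\setminus\mathscr{I}_{C,\eta}^{\,\mu+1}$; because $\mathscr{I}_C^{\,\mu}$ is $\mathscr{I}_C$-primary, $f\in\mathscr{I}_{C,\eta}^{\,\mu}\cap\mathscr{O}_X=\mathscr{I}_C^{\,\mu}$, so $f$ lies in the ideal sheaf $\mathscr{I}_C^{\,\mu}$ everywhere, not merely generically. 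I would then define the desired morphism by sending a local section $g|_C\cdot f$ of $\mathscr{O}_C(-D)=\mathscr{O}_X(-D)\otimes\mathscr{O}_C$ (with $g\in\mathscr{O}_X$, $f$ a local generator) to the class $\overline{gf}\in\mathscr{I}_C^{\,\mu}/\mathscr{I}_C^{\,\mu+1}=\operatorname{Sym}^\mu\mathscr{N}_{C/X}$. This is independent of the lift $g$ since $\mathscr{I}_C\cdot\mathscr{I}_C^{\,\mu}\subseteq\mathscr{I}_C^{\,\mu+1}$, and independent of the choice of local generator: replacing $f$ by $uf$ with $u\in\mathscr{O}_X^\times$ replaces the symbol by $(u|_C)\cdot(\text{symbol})$, compatibly with the $\mathscr{O}_C$-module structure, because $uf\equiv (u|_C)\,f \pmod{\mathscr{I}_C^{\,\mu+1}}$. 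Hence the local assignments glue to a morphism of $\mathscr{O}_C$-modules $\mathscr{O}_C(-D)\to\operatorname{Sym}^\mu\mathscr{N}_{C/X}$.

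Finally, for injectivity I would argue that the source is invertible and the target is locally free, hence torsion free, on the curve $C$ (working on each connected, and so integral, component if $C$ is disconnected); a morphism from an invertible sheaf to a torsion-free sheaf on an integral scheme is injective as soon as it is nonzero. Nonvanishing holds because $\overline{f}\neq 0$ in $(\operatorname{Sym}^\mu\mathscr{N}_{C/X})_\eta$, which is precisely the statement $f\notin\mathscr{I}_{C,\eta}^{\,\mu+1}$, i.e.\ that $\mu$ is the \emph{exact} order of vanishing.

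The genuinely non-formal point is the step asserting that ``order of vanishing $\mu$ at the generic point of $C$'' forces $f\in\mathscr{I}_C^{\,\mu}$ as a global section of the ideal sheaf; this is where the regular-embedding hypothesis is essential, entering through the primariness of the powers $\mathscr{I}_C^{\,\mu}$ (equivalently, through the coincidence of ordinary and symbolic powers of $\mathscr{I}_C$ for a smooth $C$ in a smooth $X$). The remaining ingredients — the identification $\operatorname{gr}_{\mathscr{I}_C}\mathscr{O}_X\cong\operatorname{Sym}^\bullet\mathscr{N}_{C/X}$ and the trivialization-independence check — are routine.
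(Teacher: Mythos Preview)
Your proof is correct, and it follows a genuinely different route from the paper's. The paper blows up $C$ in $X$, writes $\pi^*D=D'+\mu S$ with $S=\mathbf{P}(\mathscr{N}_{C/X})$ the exceptional divisor, and obtains the inclusion by pushing forward $\mathscr{O}_S(-D')\hookrightarrow\mathscr{O}_S$ twisted by $\mathscr{O}_S(\mu)$, using $\pi_*\mathscr{O}_S(\mu)=\operatorname{Sym}^\mu\mathscr{N}_{C/X}$ and $\pi_*\mathscr{O}_S(\mu-D')=\mathscr{O}_C(-D)$. Your argument instead works directly with the $\mathscr{I}_C$-adic filtration, producing the map as the leading symbol of a local equation of $D$. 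The two are of course closely related (the exceptional divisor is $\operatorname{Proj}$ of the associated graded you are using), but the packaging is different. The blowup version is slicker on well-definedness and immediately yields the geometric criterion the paper records in passing --- that the map is a \emph{subbundle} inclusion precisely when $D'|_S$ has no vertical components. Your version is more self-contained, avoids the blowup, and makes explicit the one genuinely nontrivial input: that for a smooth $C$ in a smooth $X$ the ordinary and symbolic powers of $\mathscr{I}_C$ agree, so that generic order of vanishing $\mu$ forces $f\in\mathscr{I}_C^{\,\mu}$ everywhere along $C$.
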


\begin{proof} Let $\pi : X' \rightarrow X$ be the blowup of $C$. Then $\pi^*(D) = D' + \mu S$, where $S = {\bf P}(\ms{N}_{C/X})$ is the exceptional divisor of $\pi$ and $D'$ is the strict transform of $D$. We have $\ms{O}_S(S) = \ms{O}_S(-1)$, the tautological line bundle on $S$. So, $\pi_*(\ms{O}_S(\mu - D')) = \pi_* \pi^* \ms{O}_C(-D) = \ms{O}_C(-D)$ is naturally a subsheaf of $\pi_*\ms{O}_S(\mu) = \op{Sym}^\mu \ms{N}_{C/X}$ (also, it is a {\it subbundle} iff $D'|_S$ has no vertical components).
\end{proof}

We can use the lemma to give a lower bound on $\op{mult}_{C/X}(D)$ based on the ``local data'' $\ms{O}_C(-D)$. This takes a particularly simple form if $\ms{N}_{C/X}$ is a semistable vector bundle. Recall that, by definition, a vector bundle $\ms{E}$ on a curve $C$ is {\it semistable} if and only if for any subsheaf $\ms{F} \hookrightarrow \ms{E}$, we have $\op{slope}(\ms{F}) \leq \op{slope}(\ms{E})$. Here we denote $\op{slope}(\ms{E}) = \op{deg}(\ms{E}) / \op{rank}(\ms{E})$. For background on semistability, see e.g. \cite{PO}.

\begin{corollary}\label{basic_cor} In the above setting, suppose that $\ms{N}_{C/X}$ is semistable. Then
$$\op{slope}(\ms{N}_{C/X}) \op{mult}_{C/X}(D) \geq -C \cdot D.$$
\end{corollary}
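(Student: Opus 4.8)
The plan is to combine the injection supplied by the Basic Lemma with the fact that semistability is preserved by tensor powers. By Lemma~\ref{basic_lemma} we have an injection of sheaves $\ms{O}_C(-D)\hookrightarrow \op{Sym}^\mu \ms{N}_{C/X}$, where $\mu=\op{mult}_{C/X}(D)$. Since we work over $\mathbb C$, the symmetrization map $\op{Sym}^\mu\ms{N}_{C/X}\hookrightarrow \ms{N}_{C/X}^{\otimes\mu}$ is a (split) injection of vector bundles, so composing gives an injection $\ms{O}_C(-D)\hookrightarrow \ms{N}_{C/X}^{\otimes\mu}$.

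Next I would invoke the theorem that on a smooth projective curve over $\mathbb C$ the tensor product of semistable bundles is again semistable (see \cite{PO}); hence $\ms{N}_{C/X}^{\otimes\mu}$ is semistable. Applying the definition of semistability to the subsheaf $\ms{O}_C(-D)$ gives $\op{slope}(\ms{O}_C(-D))\le \op{slope}(\ms{N}_{C/X}^{\otimes\mu})$. The left-hand side is $\deg\ms{O}_C(-D)=-\deg(\ms{O}_X(D)|_C)=-C\cdot D$. For the right-hand side, slope is additive under tensor product, so $\op{slope}(\ms{N}_{C/X}^{\otimes\mu})=\mu\,\op{slope}(\ms{N}_{C/X})$. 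Substituting and rearranging yields $\op{slope}(\ms{N}_{C/X})\,\mu\ge -C\cdot D$, which is exactly the assertion once we recall $\mu=\op{mult}_{C/X}(D)$.

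The only step that is not purely formal — and the one I expect to be the main obstacle — is the semistability of $\ms{N}_{C/X}^{\otimes\mu}$ (equivalently, of $\op{Sym}^\mu\ms{N}_{C/X}$): this is a genuine theorem, specific to characteristic $0$, that I would cite rather than reprove. Everything else is bookkeeping with degrees and ranks. I would also note that the degenerate case $\mu=0$ (when $C\not\subset\op{Supp}(D)$) is consistent with the statement: the inequality then reads $C\cdot D\ge 0$, which holds because $D|_C$ is an effective divisor on $C$. In particular no reducedness or normality hypotheses on $D$ are needed, since the argument only sees $\ms{O}_C(-D)$ together with the conormal bundle.
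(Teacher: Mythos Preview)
Your argument is correct and follows essentially the same route as the paper: invoke the injection from the Basic Lemma and then use that semistability is preserved under tensor operations in characteristic~$0$ (the paper cites \cite{PO}, Thm.~10.2.1 directly for $\op{Sym}^\mu\ms{N}_{C/X}$, whereas you take the cosmetically different step of embedding into $\ms{N}_{C/X}^{\otimes\mu}$ first). The slope computations and the treatment of the case $\mu=0$ are fine.
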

\begin{proof} This follows from Basic Lemma together with the fact that, for any $\mu \geq 0$, $\op{Sym}^\mu \ms{N}_{C/X}$ is semistable of slope $\mu \cdot \op{slope}(\ms{N}_{C/X})$ (\cite{PO}, Thm. 10.2.1).
\end{proof}

\bigskip

\section{Ruled Surfaces and Semistability}\label{sec_ruled}

In this section we give some background on ruled surfaces and elementary transforms. Our reference is (\cite{H}, Ch. V). Let $C$ be a smooth curve of genus $g \geq 0$ and let $S$ be a ruled surface over $C$. Let $C_0$ be {\it a minimal section} of $S$, i.e. a section of minimal self-intersection. The invariant $e = C_0^2$ is {\it the degree} of $S$ (this differs by sign from Hartshorne's notation). We have:

\begin{lemma} Let $\ms{E}$ be a rank 2 vector bundle on $C$. Consider the ruled surface $S = {\bf P}(\ms{E})$. The following are equivalent:
\begin{enumerate} 
\item $\ms{E}$ is semistable;
\item $e \geq 0$;
\item for any effective divisor $D$ of $S$, we have $D^2 \geq 0$.
\end{enumerate}
\end{lemma}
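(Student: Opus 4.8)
The plan is to prove the three-way equivalence by establishing the cycle $(1) \Rightarrow (3) \Rightarrow (2) \Rightarrow (1)$, using standard facts about the Picard group of a ruled surface and relating effective divisors to semistability of $\ms{E}$.

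First I would recall the structure of $\op{Num}(S)$ for a ruled surface $\pi \colon S \to C$: it is generated by the class of a minimal section $C_0$ and a fiber $f$, with intersection numbers $C_0^2 = e$, $C_0 \cdot f = 1$, $f^2 = 0$. Any divisor class is numerically $D \equiv a C_0 + b f$ for integers $a, b$, so $D^2 = a^2 e + 2ab$. Note $\ms{O}_S(C_0) = \ms{O}_{{\bf P}(\ms{E})}(1)$ after a suitable normalization of $\ms{E}$, so that $\pi_* \ms{O}_S(C_0) = \ms{E}$; moreover, writing $\ms{E}$ normalized so that it has sections but no twist down by a line bundle does, one has $e = -\deg \ms{E}$ for the normalized $\ms{E}$. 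The key dictionary is: an effective divisor in the class $a C_0 + bf$ with $a > 0$ exists if and only if $H^0(C, (\op{Sym}^a \ms{E}) \otimes \ms{L}) \neq 0$ for the appropriate line bundle $\ms{L}$ of degree $b$ on $C$.

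For $(1) \Rightarrow (3)$: assume $\ms{E}$ is semistable and let $D$ be effective. If $D$ is a sum of fibers, $D^2 = 0$. Otherwise $D \equiv a C_0 + bf$ with $a \geq 1$. A nonzero section of $(\op{Sym}^a \ms{E}) \otimes \ms{L}$ gives a subsheaf $\ms{O}_C \hookrightarrow (\op{Sym}^a \ms{E}) \otimes \ms{L}$, and since $\op{Sym}^a \ms{E}$ is semistable (by the same Ramanan/Hartshorne fact cited in Corollary~\ref{basic_cor}, \cite{PO}, Thm.\ 10.2.1), so is the twist; semistability forces $\op{slope}(\ms{O}_C) = 0 \leq \op{slope}((\op{Sym}^a\ms{E}) \otimes \ms{L})$, which translates precisely into $a e + 2b \geq 0$ up to the normalization constant — but one must be careful to track the relation between $\deg \ms{L}$, the coefficient $b$, and $e = -\deg \ms{E}$. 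This yields $D^2 = a(ae + 2b) \geq 0$. For $(3) \Rightarrow (2)$: apply $(3)$ to the effective divisor $C_0$ itself, giving $C_0^2 = e \geq 0$. For $(2) \Rightarrow (1)$: this is the content of \cite{H}, Exercise V.2.8 (or can be argued directly): if $\ms{E}$ were not semistable, it would have a destabilizing sub-line-bundle $\ms{M}$ of degree $> \tfrac12 \deg \ms{E}$, hence a section of $\ms{E} \otimes \ms{M}^{-1}$ not vanishing on a divisor pulled back from $C$, producing a section $C_1$ with $C_1^2 < 0$, contradicting minimality of $e \geq 0$ — equivalently, the destabilizing bundle produces a section of self-intersection $2\deg\ms{M} - \deg\ms{E} < 0 \le e$, impossible.

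The main obstacle I anticipate is bookkeeping the normalization: different sources normalize $\ms{E}$ (and hence the sign conventions for $e$, the identification $\ms{O}_S(1)$, and the formula $\pi_* \ms{O}_S(aC_0+bf) = (\op{Sym}^a \ms{E})\otimes \ms{O}_C(b)$) differently, and the paper has already flagged that its sign convention for $e$ differs from Hartshorne's. Getting the inequality $\op{slope}(\ms{O}_C) \le \op{slope}((\op{Sym}^a \ms{E}) \otimes \ms{L})$ to come out as exactly $ae + 2b \ge 0$ requires pinning down these conventions consistently; once that is done, each implication is short. A clean alternative for $(2)\Leftrightarrow(1)$ is simply to cite \cite{H}, Exer.\ V.2.8--2.9 and Prop.\ V.2.20--2.21, reducing the real work to the implication $(1)\Rightarrow(3)$ via the symmetric-power semistability already invoked in Corollary~\ref{basic_cor}.
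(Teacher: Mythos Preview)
Your proposal is correct, but the logical route differs from the paper's. The paper argues $(a)\Leftrightarrow(b)$ by citing \cite{H}, Exer.~V.2.8, then proves $(b)\Rightarrow(c)$ directly from Hartshorne's description of the effective cone (Prop.~V.2.20--21): if $e\ge 0$ and $D\sim \mu C_0-\g{b}f$ is effective, those propositions give $\tfrac12 e\mu - b\ge 0$, whence $D^2\ge 0$; finally $(c)\Rightarrow(b)$ is the trivial evaluation at $C_0$. You instead close the cycle through $(1)\Rightarrow(3)$ using semistability of $\op{Sym}^a\ms{E}$: a section of $(\op{Sym}^a\ms{E})\otimes\ms{L}$ forces $0\le a\,\op{slope}(\ms{E})+\deg\ms{L}$, which unwinds to $D^2\ge 0$. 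This is a genuinely different argument and has the virtue of reusing exactly the symmetric-power fact already invoked in Corollary~\ref{basic_cor}, so it is self-contained within the paper's toolkit; the paper's version, by contrast, offloads the work to Hartshorne's effective-cone propositions and avoids symmetric powers entirely. One small slip: in your direct argument for $(2)\Rightarrow(1)$, the self-intersection of the section coming from a saturated sub-line-bundle $\ms{M}\subset\ms{E}$ is $\deg\ms{E}-2\deg\ms{M}$, not $2\deg\ms{M}-\deg\ms{E}$; with the correct sign the contradiction goes through, and in any case your fallback citation of \cite{H}, Exer.~V.2.8 matches the paper.
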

\begin{proof}
(a) $\Leftrightarrow$ (b) See \cite{H}, Exercise V.2.8.

(b) $\Rightarrow$ (c) Let $C_0$ be a minimal section of $S$. Suppose $ e= C_0^2 \geq 0$. Let $D \sim \mu C_0 - \g{b}f$ be an effective divisor. Then, $\frac 1 2 e \mu - b \geq 0$ where $b = \deg\g{b}$ (this follows from \cite{H}, Prop. V.2.20 if $e=0$ and Prop. 21 if $e > 0$). Hence $D^2 \geq 0$.

(c) $\Rightarrow$ (b) is obvious.
\end{proof}

If $S = {\bf P}(\ms{E})$ with $\ms{E}$ semistable, we will also say that {\it the surface} $S$ is semistable. By the lemma above, $S$ is semistable if and only if $S$ is of degree $e \geq 0$.

\subsection{Elementary Transforms} Let $S$ be a ruled surface over $C$ and let $P$ be a point on $S$. We can create a new ruled surface $S'$ by applying {\it an elementary transform} at $P$ (\cite{H}, Example V.5.7.1). We recall the construction. Denote by $F$ be the fiber of $S$ through $P$. Let $\pi : \wt S \rightarrow S$ be the blowup of $P$ and let $\wt F$ be the strict transform of $F$ in $\wt {S}$. Finally, let $\pi' : \wt S \rightarrow S'$ be the contraction of the (-1)-curve $\wt F$ in $\wt S$ (see Fig. \ref{fig_transform1}).

Similarly, we can define elementary transforms for vector bundles. In the setting above, suppose that $S = {\bf P}(\ms{E})$ where $\ms{E}$ is a rank 2 vector bundle on $C$. Consider the short exact sequence
$$0 \rightarrow \ms{E}' \rightarrow \ms{E} \rightarrow \mathbb{C}(P) \rightarrow 0$$
where the map on the right is just the evaluation map at $P$. The kernel $\ms{E}'$ is again a rank 2 vector bundle on $C$. We can identify $S' = {\bf P}(\ms{E}')$ with the surface constructed above.

The following lemma describes the behavior of semistability under elementary transforms.

\begin{lemma}\label{lemma_ss_fiber} Let $S = {\bf P}(\ms{E})$ be a semistable ruled surface. Let $P$ be a general point on a fixed fiber $F$ of $S$ (the fiber $F$ need not be general). Let $S'$ be the ruled surface obtained from $S$ by applying an elementary transform at $P$. Then $S'$ is semistable, unless $S \cong C \times {\bf P}^1$ is the trivial ruled surface.
\end{lemma}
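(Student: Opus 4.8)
The plan is to translate the statement about semistability of $S'$ into a statement about the degree $e'$ of $S'$, using the equivalence (a) $\Leftrightarrow$ (b) $\Leftrightarrow$ (c) of the previous lemma, and then argue by contradiction. So suppose $S' = {\bf P}(\ms{E}')$ is not semistable; equivalently, $S'$ admits an effective divisor $D'$ with $(D')^2 < 0$, and in fact we may take $D'$ to be a minimal section $C_0'$ with $(C_0')^2 = -e' < 0$. Our goal is to show that this forces $S \cong C \times {\bf P}^1$. The first step is to pull $C_0'$ back through the elementary transform: let $\wt S$ be the blowup of $S$ at $P$ with exceptional curve $E$, so that $\wt S$ is also the blowup of $S'$ at the point $Q = \pi'(\wt F)$, with exceptional curve $\wt F$ (the strict transform of the fiber $F$). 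Let $\wt{C_0'}$ be the strict transform of $C_0'$ in $\wt S$ and let $C_1 = \pi(\wt{C_0'})$ be its image in $S$, a curve (possibly with a new base point along $F$, or possibly passing through $P$). Writing $\nu = \op{mult}_Q(C_0') \geq 0$ for the multiplicity of $C_0'$ at $Q$ and $\mu = \op{mult}_P(C_1) \geq 0$, one has $\wt{C_0'} = \pi^* C_1 - \mu E$ on one hand and $\wt{C_0'} = (\pi')^* C_0' - \nu \wt F$ on the other, and comparing intersection numbers with $E$ and $\wt F$ pins down $\nu$ and $\mu$ in terms of the fiber class.

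The key numerical step is then to compute $(C_1)^2$ on $S$ in terms of $(C_0')^2 = -e'$ and the local multiplicities, and to compare it against the fact that $S$ is semistable, hence (by part (c) of the previous lemma) satisfies $D^2 \geq 0$ for every effective divisor $D$ — in particular $(C_1)^2 \geq 0$. A short blowup computation gives $(C_1)^2 = (C_0')^2 + (\text{correction from } \mu, \nu)$; because $C_0'$ is a section, it meets the fiber $\wt F$ transversally in one point so $\nu \leq 1$, and similarly $\mu \leq 1$. Running through the two or three resulting cases, the inequality $(C_1)^2 \geq 0$ combined with $(C_0')^2 = -e' \leq -1$ forces $e' = 1$ and forces the section $C_1$ on $S$ to itself satisfy $(C_1)^2 = 0$ and to pass through $P$ (this is where the hypothesis that $P$ is \emph{general on the fiber} $F$ enters: a general point of $F$ lies on some section of $S$ of self-intersection $0$ exactly when such sections sweep out $S$, i.e. when $e = 0$; and once $e = 0$, we must decide between the two rank-2 bundles of degree $0$ with $e = 0$, the decomposable one $\ms{O}_C \oplus \ms{O}_C$ and, on an elliptic curve, the Atiyah bundle).

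The final step is to rule out the non-trivial semistable surfaces of degree $0$. If $S$ has $e = 0$ but $S \not\cong C \times {\bf P}^1$, then (for $g = 1$) $S$ is the indecomposable ruled surface associated to the Atiyah bundle, reviewed in Appendix \ref{appendix_A}; its sections of self-intersection $0$ do \emph{not} pass through a general point of a general fiber — in fact there is a unique section of self-intersection $0$ through each point, and the elementary transform at a general point of a fiber produces another degree-$0$ (hence semistable) surface. For $g \geq 2$ with $e = 0$, one uses that $\ms{E}$ semistable of even degree with $e=0$ and $\ms{E}$ not a twist of $\ms{O}_C^{\oplus 2}$ still has the property that the universal section through $P$ has self-intersection $0$, so $C_1^2 = 0$ is consistent and no contradiction with semistability of $S'$ arises; only in the split case $\ms{E} = \ms{O}_C^{\oplus 2}$, where there are sections of self-intersection $0$ moving in a pencil and a general one avoids $P$ while a special one (the ``$C_0$ through $P$'') can be used to push $C_0'$ down to a \emph{negative} section on $S$, does the contradiction bite. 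Thus $(D')^2 < 0$ is impossible unless $S \cong C \times {\bf P}^1$, which is the claim.

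I expect the main obstacle to be the case analysis at the boundary $e' = 1$, $e = 0$: precisely identifying which minimal section $C_1$ on $S$ arises and showing it must pass through $P$ requires using genericity of $P$ on the fiber correctly, and the elliptic indecomposable case (Appendix \ref{appendix_A}) has to be handled by hand since its geometry of sections differs from the split case. Everything else is a routine blowup bookkeeping computation of self-intersections.
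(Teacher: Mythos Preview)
Your overall strategy matches the paper's: assume $S'$ is not semistable, take a section $C_0'$ of $S'$ with $(C_0')^2 < 0$, transport it across the elementary transform to a section $C_1$ of $S$, and use $C_1^2 \ge 0$ to conclude $C_1^2 = 0$ and $C_1$ passes through $P$. The paper does exactly this in three lines. Your intermediate bookkeeping with multiplicities $\mu,\nu$ is unnecessary: since $C_0'$ and $C_1$ are \emph{sections}, they meet each fiber transversally in a single point, so the only possibilities are $\mu,\nu\in\{0,1\}$, and the two cases collapse immediately to $C_1^2 = (C_0')^2 \pm 1$.

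The genuine problem is your final step. You write that sections of self-intersection $0$ sweep out $S$ ``i.e.\ when $e=0$,'' and then launch into a case analysis over the possible $e=0$ surfaces. But $e=0$ is only a \emph{necessary} condition; it is far from sufficient, and your case list is both incomplete (you omit $\mathbf{P}(\ms{O}_C\oplus L)$ with $\deg L=0$, $L\not\cong\ms{O}_C$, and the entire zoo of stable bundles for $g\ge 2$) and in places wrong (the Atiyah bundle has a \emph{single} section of self-intersection $0$, not one through each point; and your $g\ge 2$ paragraph has the direction of the contradiction reversed). The clean argument, which replaces all of this, is: if a section $C_1$ with $C_1^2=0$ passes through a \emph{general} point of $F$, then such sections cover $F$ and hence move in a positive-dimensional family; a moving section with $C_1^2=0$ has $h^0(N_{C_1/S})\ge 1$ with $\deg N_{C_1/S}=0$, so $N_{C_1/S}\cong\ms{O}_{C_1}$. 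Writing $\ms{E}$ as an extension $0\to L\to\ms{E}\to L'\to 0$ with $\deg L=\deg L'$, triviality of the normal bundle forces $L\cong L'$ and the extension to split, i.e.\ $S\cong C\times\mathbf{P}^1$. This is what the paper's one-line ``Since $P$ is a general point on a fiber $F$, it follows that $S\cong C\times\mathbf{P}^1$'' is invoking; no case-by-case analysis over genera or bundle types is needed.
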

\begin{proof} If $S'$ is not semistable, there exists a section $C'$ of $S'$ with $(C')^2 < 0$. Denote by $C$ be the strict transform of $C'$ in $S$. Since $S$ is semistable, we have $C^2 \geq 0$. It follows that $C$ passes through $P$ and $C^2 = (C')^2 + 1 = 0$. Since $P$ is a general point on a fiber $F$, it follows that $S \cong C \times {\bf P}^1$.
\end{proof}

\bigskip

\section{First Degeneration}\label{sec_first}

We introduce the first degeneration. The method in this section extends author's previous work in \cite{PE}. As an application we prove Theorem \ref{thm1} below. We are unaware if the result has appeared previously in the literature in this form. 

\begin{theorem}\label{thm1} Let $n$ be a non-square positive integer. Write $n = k^2 + \alpha$ with $k = \lfloor \sqrt{n} \rfloor$. Assume that either: (i) $\alpha$ is even, or (ii) $k \geq 3$. If the linear system $\ms{L}(d,n,m)$ is nonempty, then $d/m \geq c^{(1)}_n$, where
%$$c^{(1)}_n = k + \dfrac { \alpha } { 2k + \dfrac \alpha k }.$$
$$c^{(1)}_n = k + \dfrac { 1 } { (2k/\alpha) + \dfrac 1 k }.$$
\end{theorem}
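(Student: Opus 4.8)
The plan is to carry out the ``first degeneration'' described in the introduction and then apply Corollary~\ref{basic_cor}. Concretely, suppose $\ms{L}(d,n,m)$ is nonempty and let $\mathcal{D}$ be a curve of degree $d$ with multiplicity $\geq m$ at the $n$ general points $\ms{P}_i$. First I would specialize the points $\ms{P}_i$ to general points $P_i$ lying on a fixed smooth plane curve $C$ of degree $k$; by semicontinuity the specialized linear system is still nonempty, so there is an effective divisor $D$ of degree $d$ in ${\bf P}^2$ with $\op{mult}_{P_i}(D)\geq m$ for all $i$. Let $\mu=\op{mult}_{C/{\bf P}^2}(D)$ be the multiplicity of vanishing of $D$ along $C$. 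Writing $D = \mu C + D'$ with $D'$ not containing $C$, the residual divisor $D'$ has degree $d-\mu k$, and since each $P_i$ imposes a point of multiplicity $\geq m$ on $D$ (with $C$ smooth at $P_i$, so $C$ absorbs at most $\mu$ of that multiplicity along the tangent direction) one gets a lower bound on how $D'$ meets $C$: roughly $C\cdot D' \geq \sum_i (m-\mu) = n(m-\mu)$ when $m\geq\mu$, which I will need to make precise using that the $P_i$ are general on $C$.

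The key point is to package this as a statement about the conormal bundle. Here I would instead work on the ruled surface $S$ obtained by a sequence of blowups/elementary transforms so that the curve $C$ reappears as a section whose conormal (equivalently normal) data is governed by a semistable rank~$2$ bundle $\ms{E}$ of degree $\alpha$ on $C$ — this is exactly the reduction promised in the introduction, and Lemma~\ref{lemma_ss_fiber} together with the genus formula $g(C)=\binom{k-1}{2}$ controls the invariants. On $S$ the (transformed) divisor $\bar D$ acquires a multiplicity $\bar\mu$ of vanishing along the section $\bar C$, and the ``local data'' $\ms{O}_{\bar C}(-\bar D)$ has degree $-\bar C\cdot\bar D$ computed from $d$, $m$, $n$ and the intersection theory on $S$ (using $\bar C^2$ determined by $\alpha$ and $g$, and that $\bar C\cdot f = 1$ for a fiber $f$). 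Because $\ms{N}_{\bar C/S}$ is semistable of slope $\op{slope}(\ms{E}) = \alpha/2$ — wait, more carefully, the conormal bundle of a section in a ruled surface is a line bundle, so I actually want to iterate: I would apply the Basic Lemma at the \emph{plane} stage, where $\ms{N}_{C/{\bf P}^2}$ is a line bundle of degree $k^2 = \deg \ms{O}_C(k)\cdot k$, giving the trivial bound $k\mu \geq -C\cdot D$, and then extract the sharper inequality from the ruled-surface incarnation where the relevant conormal sheaf of the full configuration of $n$ points genuinely is a rank~$2$ semistable bundle; Corollary~\ref{basic_cor} then yields $(\alpha/2)\cdot(\text{mult}) \geq -(\text{intersection number})$.

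Assembling the inequalities, one obtains a system of linear constraints relating $d$, $m$, $\mu$ (and possibly the analogous multiplicity on $S$). I expect the cleanest route is: from the plane, $\mu \leq d/k$ and $C\cdot D' = dk - \mu k^2 \geq $ a multiple of $m-\mu$ coming from the generality of the $P_i$; feeding the residual configuration into the semistable bundle on $C$ and applying Corollary~\ref{basic_cor} once more to the residual divisor gives a second inequality of the form $(\alpha/2)\mu' \geq -(\text{residual intersection})$. Optimizing (i.e. choosing the worst case, where all inequalities are tight) forces $d/m$ to be at least the value obtained by ``folding'' these two linear constraints together — and the arithmetic of a two-step optimization of linear fractional quantities is precisely what produces a length-two continued fraction $k + \cfrac{1}{(2k/\alpha)+\cfrac{1}{k}}$. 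The hypotheses (i) $\alpha$ even, or (ii) $k\geq 3$ enter to guarantee that the semistable bundle $\ms{E}$ of degree $\alpha$ with the required generic splitting actually exists on $C$ (for small $k$ one needs $\alpha$ even to build it from line bundles; for $k\geq 3$ the moduli space is large enough), and that the exceptional case $S\cong C\times{\bf P}^1$ of Lemma~\ref{lemma_ss_fiber} does not arise.

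The main obstacle I anticipate is making the reduction to the ruled surface $S$ genuinely rigorous — in particular, tracking how the $n$ point-conditions of multiplicity $m$ transform through the blowups and elementary transforms into the single ``vanishing along a section'' datum on $S$, verifying that $\ms{E}$ is semistable (so that $C_0^2 = \alpha \geq 0$ and the hypothesis that $\alpha > 0$, i.e. $n$ non-square, is what makes this nontrivial), and checking that the generality of the $P_i$ on $C$ survives to give the sharp intersection estimate rather than a lossy one. Everything after that is the bookkeeping of two linear inequalities, which I would not belabor.
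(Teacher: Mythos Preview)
Your proposal has a genuine gap: you never identify where the rank-$2$ semistable bundle comes from, and as a result the argument cannot close. You correctly observe that $\ms{N}_{C/{\bf P}^2}$ is a line bundle (of degree $-k^2$), so Corollary~\ref{basic_cor} applied in ${\bf P}^2$ gives only the trivial upper bound $\mu\leq d/k$. You then try to find the rank-$2$ bundle on a ruled surface $S$, realize that the conormal of a section there is again a line bundle, and retreat to a vague ``iteration'' of Basic Lemma. But iterating Corollary~\ref{basic_cor} against line bundles cannot produce a \emph{lower} bound on $\mu$: any line-bundle conormal you encounter in this picture has nonpositive degree, so the inequality always points the wrong way. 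The two-step continued fraction does not arise from two applications of the Basic Lemma; it arises from combining one nontrivial lower bound with the trivial upper bound $\mu\le d/k$.

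The missing idea is to work in a \emph{threefold}. The paper takes $X={\bf P}^2\times\Delta$, embeds $C$ in the central fibre $X_0$, and then specializes the $n$ relative points $\ms{P}_i$ onto $C$ and blows them up to obtain $\wt X$. Now $\wt C\subset\wt X$ has codimension two, so $\ms{N}_{\wt C/\wt X}$ is an honest rank-$2$ bundle on $C$; it sits in an extension
\[
0\to\ms{O}_C\to\ms{N}_{\wt C/\wt X}\to A\to 0,\qquad \deg A=\alpha,
\]
coming from the normal directions along $X_0$ and inside $X_0$ respectively, with the blown-up points contributing the $n$ elementary transforms that turn $\ms{O}_C(-kH)$ into $A$. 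A key lemma shows that for a general choice of specialization this extension can be made arbitrary, hence $\ms{N}_{\wt C/\wt X}$ can be taken semistable of slope $\alpha/2$ (this is exactly where the hypothesis ``$\alpha$ even or $k\ge 3$'' enters, via the genus of $C$). A single application of Corollary~\ref{basic_cor} then gives
\[
\tfrac{\alpha}{2}\,\mu\;\ge\;-\wt C\cdot\wt{\ms{C}}\;=\;nm-kd,
\]
and combining this with $\mu\le d/k$ yields $d/m\ge 2kn/(2k^2+\alpha)=c^{(1)}_n$. So there is no second degeneration and no iteration at this stage; the entire content is the threefold setup that makes the conormal rank $2$ with positive slope.
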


In particular, the theorem applies for $n=3,6$ and $8$ and any $n \geq 9$.

\begin{example} For $n=3$, we have $c^{(2)}_3 = \frac 3 2$, which is sharp. The linear system $\ms{L}(3,3,2)$ has a unique section, namely the union of 3 lines each passing through 2 points  (\cite{N}, remark on p.772; see also \cite{CM2}, Prop. 2.3).
\end{example}
\begin{example} If $n=6$, we have $c^{(2)}_6 = \frac {12} 5$, which is also sharp. The linear system $\ms{L}(12,6,5)$ has a unique section, namely the union of six conics each passing through 5 of the 6 points  ({\it Loc. cit.}).
\end{example}

\begin{proof}[Proof of the Theorem]
Assume $\ms{L}(d,n,m)$ is nonempty. The idea is to specialize the $n$ general points in ${\bf P}^2$ to a smooth curve $C$ of degree $k$ and then apply Basic Lemma to estimate the multiplicity of vanishing of a general curve in $\ms{L}(d,n,m)$ along $C$.

\smallskip

{\it Step 1.} Let $\Delta$ be the open unit disk over $\mathbb{C}$ and let $X = {\bf P}^2 \times \Delta$. We view $X$ as a relative plane over $\Delta$. For any $t \in \Delta$, denote the fiber $X_t = X \times \{t\}$. Fix a smooth curve $C \subset X_0$ of degree $k$. We have the following split exact sequence
$$0 \rightarrow \ms{N}_{X_0/X}|_C \rightarrow \ms{N}_{C/X} \rightarrow \ms{N}_{C/X_0} \rightarrow 0$$
where $\ms{N}_{X_0/X}|_C \cong \ms{O}_C$ and $\ms{N}_{C/X_0} \cong \ms{O}_C(-kH)$. Consider the ruled surface 
$$S' = {\bf P}(\ms{N}_{C/X}).$$
Let $C'$ be the section of $S'$ corresponding to the short exact sequence above. Note that $C' \sim \ms{O}_{S'}(1)$ and $\ms{O}_{S'}(C') \otimes \ms{O}_{C'}\cong \ms{O}_C(-kH)$.

\smallskip

{\it Step 2.} Choose any set of $n$ distinct points $P_i$ on $C$ (here we do not require the points $P_i \in C$ to be general). Next, we construct a set of $n$ relative points $\ms{P}_i \rightarrow \Delta$ in $X$ specializing to $P_i$ in a general way. Denote by $P'_i$ the images of $\ms{P}_i$ in $S'$. Thus, each $P'_i$ is a general point on the fiber above $P_i$.

Let $\wt X \rightarrow X$ be the blowup of the relative points $\ms{P}_i$ and let $E_i$ denote the corresponding exceptional divisors. Denote by $\wt{C}$ the strict transform of $C$ in $\wt X$. We have the following short exact sequence:
$$0 \rightarrow \ms{N}_{\wt{X}_0/\wt{X}}|_{\wt{C}} \rightarrow \ms{N}_{\wt{C}/\wt{X}} \rightarrow \ms{N}_{\wt{C}/\wt{X}_0} \rightarrow 0$$
We have $\ms{N}_{\wt{X}_0/\wt{X}}|_{\wt{C}} \cong \ms{O}_C$ and $\ms{N}_{\wt{C}/\wt{X}_0} \cong A$, where
$$A = \ms{O}_C(\sum P_i - kH)$$ is a line bundle of degree $\alpha = n - k^2$ on $C$. The short exact sequence corresponds to a certain element
$$\xi \in \op{Ext}^1(A,\ms{O}_C).$$
Next, consider the ruled surface
$$S = {\bf P}(\ms{N}_{\wt C/\wt X}).$$
We identify $C$ with the section of $S$ corresponding to the above exact sequence. Note that $C \sim \ms{O}_S(1)$ and $\ms{O}_S(C) \otimes \ms{O}_C \cong A$.

\smallskip

\begin{figure}[ht]
\includegraphics[scale=.8]{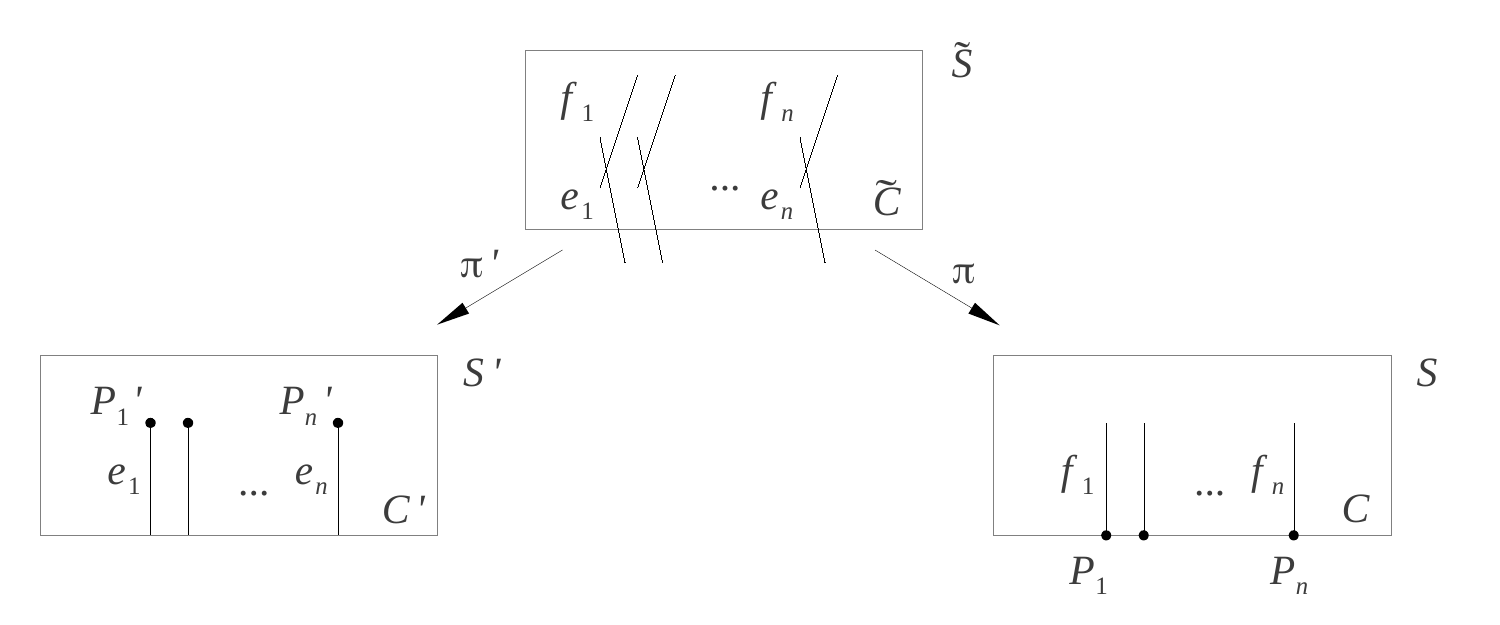}
\caption{Elementary transforms}
\label{fig_transform1}
\end{figure}

The conormal bundles $\ms{N}_{C/X}$ and $\ms{N}_{\wt C/\wt X}$ are related by elementary  transforms at the points $P_i$:
$$0 \rightarrow \ms{N}_{C/X} \rightarrow \ms{N}_{\wt{C}/\wt{X}} \rightarrow \oplus_{i=1}^n {\mathbb C}(P_i) \rightarrow 0$$
Similarly, the ruled surfaces $S'$ and $S$ are related by elementary transforms as on Fig. \ref{fig_transform1} (the intermediate surface $\wt S$ will play a role later in Section \ref{sec_reduction}).

\smallskip

{\it Step 3.} We claim:

\begin{lemma}\label{lemma_xi} Let $\xi \in \op{Ext}^1(A,\ms{O}_C)$ be an arbitrary element. Then $\xi$ can be realized in the above way, for some specialization of points ${\ms P}_i$ to $P_i$.
\end{lemma}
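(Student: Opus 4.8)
The plan is to show that the ``period map'' sending a choice of specialization of the relative points $\ms P_i \to P_i$ to the extension class $\xi \in \op{Ext}^1(A, \ms O_C)$ is surjective. First I would set up the local picture. Fixing the points $P_i \in C \subset X_0 = {\bf P}^2$, a relative point $\ms P_i \to \Delta$ specializing to $P_i$ is, up to the obvious reparametrizations, governed to first order by a normal vector $v_i \in \ms N_{P_i/X_0} = \ms N_{C/X_0}|_{P_i} \oplus (\text{tangent direction along } C)$; the component transverse to $C$ inside $X_0$ is what matters, since a direction tangent to $C$ does not move $P_i$ off $C$ and the $\Delta$-direction is already accounted for by $\ms N_{X_0/X}|_C \cong \ms O_C$. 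So the data of the $n$ specializations, modulo the part that does nothing, is a vector in $\bigoplus_{i=1}^n \ms N_{C/X_0}|_{P_i} = \bigoplus_{i=1}^n \ms O_C(-kH)|_{P_i}$, a space of dimension $n$.

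Next I would identify this $n$-dimensional space with the group computing the extension. Blowing up the $\ms P_i$ performs elementary transforms on the conormal bundle, and the recipe of Section~\ref{sec_ruled} shows that the resulting class $\xi$ is obtained from the trivial extension $\ms N_{C/X} = \ms O_C \oplus \ms O_C(-kH)$ by modifying it at each $P_i$ according to the chosen normal direction $v_i$. Concretely, the coboundary in the long exact sequence associated to
$$0 \rightarrow \ms N_{C/X} \rightarrow \ms N_{\wt C/\wt X} \rightarrow \bigoplus_{i=1}^n \mathbb C(P_i) \rightarrow 0$$
after twisting by $\ms O_C(-\sum P_i)$ and pushing forward, realizes $\op{Ext}^1(A,\ms O_C) = H^1(C, \ms O_C(kH - \sum P_i))$ as a quotient of $\bigoplus_i \ms O_C(-kH)|_{P_i} \otimes (\text{something})$; here one uses that $\alpha = n - k^2 > 0$ so $A$ has positive degree and $\op{Ext}^1(A, \ms O_C) = H^1(C, A^\vee) = H^1(C, \ms O_C(kH - \sum P_i))$ is nonzero in general but is exactly the cokernel of the evaluation-type map. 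The key point is a dimension/surjectivity count: the map from the $n$-dimensional space of normal data to $H^1(C, \ms O_C(kH - \sum P_i))$ is the connecting homomorphism of $0 \to \ms O_C(kH - \sum P_i) \to \ms O_C(kH) \to \ms O_C(kH)|_{\sum P_i} \to 0$, which is surjective because $H^1(C, \ms O_C(kH)) = H^1(C, \omega_C \otimes \ms O_C(\text{very ample}))$ — no, more simply because $kH$ on a plane curve of degree $k$ restricts to the hyperplane class and one checks $H^1$ of the relevant twist vanishes, or directly because the cokernel of $H^0(\ms O_C(kH)) \to \ms O_C(kH)|_{\sum P_i}$ surjects onto $H^1(\ms O_C(kH - \sum P_i))$ and the remaining term $H^1(\ms O_C(kH))$ is dealt with separately; I would insert the precise cohomological bookkeeping here.

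I expect the main obstacle to be making the identification between ``deformation of the specialization'' and ``coboundary class'' completely precise — that is, showing the period map is literally the connecting homomorphism above, not merely that both are linear maps out of the same space. The cleanest route is probably to argue scheme-theoretically: the family of all relative-point configurations specializing to $(P_i)$ is smooth over a base parametrizing normal directions, the relative conormal bundle $\ms N_{\wt C / \wt X}$ varies algebraically over this base, and hence so does the class $\xi$; then a tangent-space computation at the ``trivial'' point (where all $\ms P_i$ are constant sections and $\xi = 0$) shows the derivative of the period map is the connecting map, which we have shown is surjective; since the target $\op{Ext}^1(A, \ms O_C)$ is an affine space and the period map is linear-to-leading-order in the appropriate chart, surjectivity of the derivative gives surjectivity of the map. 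An alternative, perhaps more elementary, approach avoids infinitesimals entirely: realize an arbitrary $\xi$ directly by choosing, for each $i$, the specialization whose transverse normal direction is prescribed by a Čech representative of $\xi$ supported near the $P_i$, and verify by the elementary-transform description that this produces exactly $\xi$; this makes the ``period map is surjective'' statement into the assertion that every Čech $1$-cocycle for $\ms O_C(kH - \sum P_i)$ supported at the $P_i$ is achieved, which is immediate once the dictionary is in place. Either way, the genuine content is the dictionary; the surjectivity is then formal.
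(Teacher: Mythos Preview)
Your approach is workable but considerably more elaborate than the paper's, and the gap you yourself flag --- making the identification between the deformation data and the coboundary class precise --- is exactly the work the paper sidesteps.

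The paper's proof \emph{reverses the construction}. Given an arbitrary $\xi \in \op{Ext}^1(A,\ms O_C)$, it starts with the corresponding extension $0 \to \ms O_C \to \ms E \to A \to 0$, sets $S = {\bf P}(\ms E)$, and then performs elementary transforms at the points $P_i$ lying on the section $C \subset S$ to produce a new bundle $\ms E'$. This $\ms E'$ sits in an extension $0 \to \ms O_C \to \ms E' \to \ms O_C(-kH) \to 0$, and the single cohomological input is
\[
\op{Ext}^1(\ms O_C(-kH),\ms O_C) \cong H^1(C,\ms O_C(kH)) = 0,
\]
so that this extension is forced to split and $\ms E' \cong \ms O_C \oplus \ms O_C(-kH) \cong \ms N_{C/X}$. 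One then simply reads off the points $P'_i \in S' = {\bf P}(\ms E') \cong {\bf P}(\ms N_{C/X})$ produced by the elementary transforms, and chooses the relative points $\ms P_i$ to pass through them. No period map, no tangent-space computation, no \v{C}ech dictionary.

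Both arguments ultimately rest on the same vanishing $H^1(C,\ms O_C(kH)) = 0$; you reach it as surjectivity of a connecting homomorphism, while the paper reaches it as triviality of the ``undone'' extension. The advantage of the paper's route is that it never needs to describe the forward map $\{P'_i\} \mapsto \xi$ at all: it only needs to know that the backward construction lands in the right place, which the vanishing guarantees. Your route, by contrast, must establish that the forward map is literally the connecting map (your ``main obstacle''), and must also handle the fact that the $P'_i$ are points of a ${\bf P}^1$-fiber rather than honest vectors in $\ms O_C(-kH)|_{P_i}$ --- a nuisance that dissolves once you think of elementary transforms as the basic move rather than as a cohomological perturbation.
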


\begin{proof} The idea is to consider the construction in Step 2 in reversed order. Start with any extension
$$0 \rightarrow \ms{O}_C \rightarrow \ms{E} \rightarrow A \rightarrow 0$$
and let $S = {\bf P}(\ms{E})$. As before, we identify $C$ with the section of $S$ corresponding to the above exact sequence. Next, we construct the vector bundle $\ms{E}'$ by applying elementary transforms at the points $P_i$ on $C$:
$$0 \rightarrow \ms{E}' \rightarrow \ms{E} \rightarrow \oplus_{i=1}^n {\mathbb C}(P_i) \rightarrow 0$$
Let $S' = {\bf P}(\ms{E}')$ and let $P'_1,\dots,P'_n$ be as on Fig. \ref{fig_transform1}. It follows that $\ms{E}'$ is realized as an extension
$$0 \rightarrow \ms{O}_C \rightarrow \ms{E}' \rightarrow \ms{O}_C(-kH) \rightarrow 0$$
Now, the key observation is that
$$\op{Ext}^1(\ms{O}_C(-kH),\ms{O}_C) \cong H^1(C, \ms{O}_C(kH)) = 0$$
so the above extension is trivial. This allows us to identify $\ms{E}' \cong \ms{O}_C \oplus \ms{O}_C(-kH)$ with $\ms{N}_{C/X}$, and so $S'$ with ${\bf P}(\ms{N}_{C/X})$. Finally, we choose the relative points $\ms{P}_i$ to pass through $P'_i$ in $S'$. This identifies $\ms{E}$ with $\ms{N}_{\wt C/\wt X}$, and so $S$ with ${\bf P}(\ms{N}_{\wt C/\wt X})$.
\end{proof}

\begin{corollary}\label{lemma_ss} If the specialization of $\ms{P}_i$ to $P_i$ is general enough, the conormal bundle $\ms{N}_{\wt C/\wt X}$ is semistable of slope $\alpha/2$.
\end{corollary}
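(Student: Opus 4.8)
The goal is to show that for a general specialization of the relative points $\ms{P}_i$ to the fixed points $P_i \in C$, the bundle $\ms{E} = \ms{N}_{\wt C/\wt X}$ is semistable of slope $\alpha/2 = (n-k^2)/2$. The slope is immediate: $\ms{E}$ sits in the extension $0 \to \ms{O}_C \to \ms{E} \to A \to 0$ with $A$ of degree $\alpha$, so $\deg \ms{E} = \alpha$ and $\op{slope}(\ms{E}) = \alpha/2$. So the content is semistability.

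My plan is to use Lemma \ref{lemma_xi}, which tells me that as I vary the specialization, the extension class $\xi \in \op{Ext}^1(A, \ms{O}_C)$ ranges over \emph{all} of $\op{Ext}^1(A,\ms{O}_C) \cong H^1(C, A^\vee)$; conversely every $\xi$ arises. So it suffices to exhibit a single $\xi$ for which the resulting rank-2 bundle $\ms{E}$ is semistable, and then observe that semistability is an open condition: the locus of $\xi$ giving a semistable $\ms{E}$ is open in the affine space $\op{Ext}^1(A,\ms{O}_C)$, hence if it is nonempty it contains a general point. (One should note that ``general specialization of the $\ms{P}_i$'' does induce a general $\xi$; this is clear because the map from specialization data to $\xi$ is dominant by Lemma \ref{lemma_xi}, and openness in $\xi$ pulls back to openness in the specialization parameters.)

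So the real task reduces to: produce one extension $0 \to \ms{O}_C \to \ms{E} \to A \to 0$ with $\ms{E}$ semistable, where $A$ has degree $\alpha \geq 1$ on the curve $C$ of genus $g = (k-1)(k-2)/2$. If $\ms{E}$ is destabilized, there is a line subbundle $L \hookrightarrow \ms{E}$ with $\deg L > \alpha/2$, i.e. $\deg L \geq \lceil (\alpha+1)/2 \rceil$. Composing $L \to \ms{E} \to A$, either this composite is zero — forcing $L \hookrightarrow \ms{O}_C$, so $\deg L \leq 0 < \alpha/2$, a contradiction — or it is a nonzero map $L \to A$, which exhibits a section of $A \otimes L^\vee$, a line bundle of degree $\alpha - \deg L \leq \alpha - \lceil(\alpha+1)/2\rceil = \lfloor (\alpha-1)/2\rfloor < \alpha/2$. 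The existence of such a sub-line-bundle $L$ with an injection $L \hookrightarrow \ms{E}$ lifting the map to $A$ is governed by the coboundary: such a lift exists iff the image of the extension class $\xi$ under the natural map $\op{Ext}^1(A, \ms{O}_C) \to \op{Ext}^1(L, \ms{O}_C) = H^1(C, L^\vee)$ vanishes. The strategy is then a dimension count: for $\xi$ general, no such destabilizing $L$ lifts. One runs over all effective divisor classes $M = A \otimes L^\vee$ of degree $\delta \leq \lfloor(\alpha-1)/2\rfloor$ with $h^0(M) > 0$; for each the ``bad locus'' of $\xi$ is the kernel of $\op{Ext}^1(A,\ms{O}_C) \to H^1(C, L^\vee)$, and one checks this kernel is a proper subspace (equivalently, the map $H^0(M) \to \op{Ext}^1(A,\ms{O}_C)^\vee$ dual to it is nonzero / the pairing is nondegenerate in the appropriate sense), then takes the union over the finitely-many relevant Picard components — noting that since $\delta < \alpha/2$ and $g$ is large relative to small $\alpha$, the space of such $M$ is low-dimensional compared with $\op{Ext}^1(A,\ms{O}_C)$, which has dimension $h^1(A^\vee) = \deg A + g - 1 = \alpha + g - 1$ (using $h^0(A^\vee) = 0$ as $\deg A^\vee < 0$).

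I expect the main obstacle to be this final dimension count: showing the union of bad loci over all candidate destabilizing subbundles is a proper closed subset of $\op{Ext}^1(A,\ms{O}_C)$. The delicate point is controlling the map $\op{Ext}^1(A, \ms{O}_C) \to H^1(C, L^\vee)$ — by Serre duality this is dual to the multiplication map $H^0(K_C \otimes A \otimes L^\vee) \otimes \cdots$, wait, more precisely to $H^0(C, K_C \otimes L) \to H^0(C, K_C \otimes A)$ coming from the section of $A \otimes L^\vee$; one needs this to be nonzero, which follows as long as the section is nonzero and $K_C \otimes L$ has a section, and then one needs the resulting stratification by $L$ to have the right codimensions. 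An alternative, cleaner route — probably what the paper intends — is to invoke a known existence theorem for semistable bundles on curves: since $C$ has genus $g \geq 1$ for $k \geq 2$ (and one should check the genus-0 and genus-1 edge cases, i.e. $k=1,2$, by hand or note the hypotheses of the theorem being applied), the moduli space of semistable rank-2 bundles of degree $\alpha$ is nonempty, and a general such bundle fits in an extension of the required shape because a general stable bundle $\ms{E}$ has $h^0(\ms{E}) \geq 1$ when $\deg \ms{E}$ is not too negative — actually one wants a quotient $\ms{E} \twoheadrightarrow A$, i.e. $\op{Hom}(\ms{E}, A) \neq 0$, which by Riemann-Roch and stability holds once $\deg A$ is large enough relative to the genus, or can be arranged by choosing $A$ in its degree-$\alpha$ Picard class suitably. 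Either way, combining a single-example existence statement with the openness of semistability in $\xi$ and the dominance in Lemma \ref{lemma_xi} completes the proof.
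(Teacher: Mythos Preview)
Your overall framework matches the paper's: invoke Lemma~\ref{lemma_xi} to see that every $\xi$ arises from some specialization, note that semistability is open in $\xi$ (the paper cites Maruyama's openness theorem), and reduce to exhibiting a single semistable extension. The difference is entirely in this last existence step. The paper neither runs your dimension count on destabilizing sub-line-bundles nor appeals to the moduli space of stable bundles; instead it isolates the existence statement as a separate lemma and argues by induction on $\alpha$ using elementary transforms. The base case is explicit (for $g=0$ with $\alpha$ even, take $\ms{O}_{{\bf P}^1}(\alpha/2)^{\oplus 2}$); for $g\ge 1$ the inductive step is exactly Lemma~\ref{lemma_ss_fiber}, which says that an elementary transform at a general point of a fixed fiber preserves semistability provided one is not on the trivial ruled surface $C\times{\bf P}^1$. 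This route is more elementary and more constructive than your anticipated dimension count, and it sidesteps precisely the bookkeeping you flag as the ``main obstacle''. Your approach is viable in principle --- the destabilizing-subbundle analysis is a standard way to prove such statements --- but the paper's route via elementary transforms is shorter and fits naturally with the machinery already set up in Section~\ref{sec_ruled}. (In fairness, the paper also leaves the details of the induction as an exercise.)
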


This follows from Lemma \ref{lemma_xi} and the following general fact:

\begin{lemma} Let $C$ be a curve of genus $g \geq 0$. Let $A$ be a line bundle on $C$ of degree $\alpha \geq 0$. Assume that either: (i) $\alpha$ is even, or (ii) $g \geq 1$. Then, a general element $\xi \in \op{Ext}^1(A,\ms{O}_C)$ corresponds to a semistable rank 2 vector bundle $\ms{E}$ on $C$.
\end{lemma}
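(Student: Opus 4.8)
The plan is to produce a semistable extension explicitly by exhibiting a suitable sub-line-bundle situation, handling the two cases separately. Recall that a rank $2$ bundle $\ms{E}$ with $\op{deg}\ms{E} = \alpha$ is semistable iff every line subbundle $\ms{L} \hookrightarrow \ms{E}$ has $\op{deg}\ms{L} \leq \alpha/2$; since the extension places $\ms{O}_C$ inside $\ms{E}$ with quotient $A$ of degree $\alpha$, and any line subbundle either maps to zero in $A$ (hence factors through $\ms{O}_C$, degree $\leq 0 \leq \alpha/2$) or injects into $A$, the only danger is a line subbundle $\ms{L} \hookrightarrow \ms{E}$ with $\alpha/2 < \op{deg}\ms{L} \leq \alpha$. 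So the whole problem is to choose $\xi$ so that no such destabilizing $\ms{L}$ lifts. A subbundle $\ms{L} \subset A$ of degree $\ell$ corresponds to an effective divisor $\Delta$ of degree $\alpha - \ell < \alpha/2$ with $\ms{L} = A(-\Delta)$, and $\ms{L}$ lifts to $\ms{E}$ iff the pullback of $\xi$ under $\ms{L} \hookrightarrow A$ vanishes in $\op{Ext}^1(\ms{L},\ms{O}_C) = H^1(C,\ms{L}^\vee)$, equivalently iff $\xi$ lies in the kernel of the map $\op{Ext}^1(A,\ms{O}_C) \to \op{Ext}^1(A(-\Delta),\ms{O}_C)$.

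In case (ii), where $g \geq 1$, the argument is a standard dimension count. For fixed $\ell$ with $\alpha/2 < \ell \leq \alpha$ and $\ell \leq \alpha + g - 1$ (the relevant range), the divisors $\Delta$ of degree $\alpha - \ell$ sweep out a family of dimension $\alpha - \ell$; for each such $\Delta$ the locus of $\xi$ that lift along $A(-\Delta)$ has dimension $h^1(C,(A(-\Delta))^\vee) = h^1(C, A^\vee(\Delta))$, which by Riemann–Roch is $g - 1 + \op{deg}(A(-\Delta)^\vee) + h^0 = g - 1 + \ell + h^0(C,A^\vee(\Delta))$. I would compare this (plus $\alpha - \ell$ for the choice of $\Delta$) against $\dim \op{Ext}^1(A,\ms{O}_C) = h^1(C,A^\vee) = g - 1 + \alpha + h^0(C,A^\vee)$; the surplus is $(\alpha - \ell) + h^0(C,A^\vee(\Delta)) - h^0(C,A^\vee) - (\alpha - \ell)$, and for $\ell > \alpha/2$, $A^\vee(\Delta)$ has negative degree plus a small effective twist, so for general $\Delta$ its $h^0$ does not grow fast enough — the destabilizing locus is a proper subvariety, and a general $\xi$ avoids the finite union over all admissible $\ell$. (One must also treat the maximal-degree case $\ell = \alpha$ separately, where $\Delta = 0$ and lifting of $A$ itself corresponds to $\xi = 0$, clearly avoided generically.)

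In case (i), where $\alpha$ is even and $g$ may be $0$, the dimension count can degenerate (on $\mathbb{P}^1$ every bundle splits and $\op{Ext}$ groups are small), so instead I would take the direct approach: write $\alpha = 2\beta$ and look for an $\ms{E}$ with $\ms{E} \cong \ms{M} \oplus \ms{M}$ impossible in general, but rather exhibit a single semistable bundle of degree $\alpha$ sitting in the extension. Concretely, choose a line bundle $\ms{M}$ of degree $\beta$; if $A \otimes \ms{M}^{-1}$ is effective but $A \otimes \ms{M}^{-1}$ has a section vanishing on a divisor of degree $\beta$ whose complementary structure is generic, one can build the non-split extension $0 \to \ms{O}_C \to \ms{E} \to A \to 0$ whose pushout/twist realizes $\ms{E}(-\ms{M})$ as a nonsplit self-extension-type bundle $0 \to \ms{M}^{-1} \to \ms{E}(-\ms{M}) \to A\ms{M}^{-1} \to 0$ with both factors of degree $\beta$ — such a bundle is semistable as soon as it is not a direct sum, which holds for generic extension class. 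I would verify that the relevant $\op{Ext}^1$ group is nonzero (using $\alpha$ even to get $\op{deg}(A^\vee\ms{M}^2) = 0$ and choosing $\ms{M}$ so this is nontrivial or the $h^1$ is positive) and that nonsplitness forces semistability by the subbundle criterion above.

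The main obstacle I anticipate is case (i) on $\mathbb{P}^1$ (genus $0$): there "semistable" forces $\ms{E} \cong \ms{O}(\beta) \oplus \ms{O}(\beta)$, which is \emph{not} an extension of $A = \ms{O}(\alpha)$ by $\ms{O}$ unless — wait — it is, via $0 \to \ms{O} \to \ms{O}(\beta)^{\oplus 2} \to \ms{O}(\alpha) \to 0$ where the first map is a generic constant-times-section pair; the subtlety is showing the generic such extension is actually the \emph{balanced} bundle rather than an unbalanced $\ms{O}(a)\oplus\ms{O}(\alpha-a)$. This requires checking that the classifying map from $\op{Ext}^1(\ms{O}(\alpha),\ms{O}) = H^1(\mathbb{P}^1,\ms{O}(-\alpha))$ (dimension $\alpha - 1$) to the moduli of such bundles hits the balanced point, equivalently that the jumping locus is proper — a concrete cohomology computation I would carry out via the lifting criterion, counting for each $a > \beta$ the dimension of extension classes splitting off an $\ms{O}(-a)$ and confirming it is $< \alpha - 1$.
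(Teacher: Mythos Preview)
Your approach is different from the paper's and, as written, contains a real error. The paper argues in two lines: semistability is an open condition on $\xi$ (Maruyama), so it suffices to exhibit one semistable extension; for $g=0$ and $\alpha=2\beta$ this is $\ms{O}_{{\bf P}^1}(\beta)^{\oplus 2}$, and for $g\geq 1$ one produces it by induction on $\alpha$ via elementary transforms (Lemma~\ref{lemma_ss_fiber}).

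In your case (ii), the dimension count is set up correctly but executed wrongly. You say the locus of $\xi$ for which $A(-\Delta)$ lifts has dimension $h^1\bigl((A(-\Delta))^\vee\bigr)$; that is the dimension of the \emph{target} of the restriction map $\op{Ext}^1(A,\ms{O}_C)\to\op{Ext}^1(A(-\Delta),\ms{O}_C)$, not of its kernel. With this error your ``surplus'' collapses to $h^0(A^\vee(\Delta))-h^0(A^\vee)\geq 0$, which proves nothing. The fix is easy: the restriction map is surjective (its cokernel lies in an $H^2$ on a curve), so the kernel has dimension $h^1(A^\vee)-h^1(A^\vee(\Delta))=\alpha-\ell$ (for $\alpha>0$ both $h^0$ terms vanish since the degrees are negative); adding the $\alpha-\ell$ parameters for $\Delta$ gives a bad locus of dimension at most $2(\alpha-\ell)<\alpha+g-1=h^1(A^\vee)$, using $2\ell>\alpha$ and $g\geq 1$. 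So your strategy is salvageable, but not as written.

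Your case (i) is too sketchy, and part of it is wrong. Twisting the extension by $\ms{M}^{-1}$ yields an extension of $A\otimes\ms{M}^{-1}$ (degree $\beta$) by $\ms{M}^{-1}$ (degree $-\beta$), not ``both factors of degree $\beta$''; and nonsplitness of such an extension does \emph{not} force semistability (on ${\bf P}^1$, the bundle $\ms{O}(-1)\oplus\ms{O}(1)$ is a nonsplit extension of $\ms{O}(2)$ by $\ms{O}(-2)$ yet is unstable). Your final idea---counting, for each $a>\beta$, the extension classes on ${\bf P}^1$ admitting an $\ms{O}(a)$ subbundle---does work via the corrected kernel count above, and you should simply carry it out rather than defer it.
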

\begin{proof} The set of elements $\xi \in \op{Ext}^1(A,\ms{O}_C)$ that correspond to semistable vector bundles $\ms{E}$ is open (this follows from \cite{M2}, Thm. 2.8). So, it suffices to show that the set is nonempty. Now, if $g=0$ and $\alpha = 2\alpha_0$ is even, then $\ms{O}_{{\bf P}^1}(\alpha_0)\oplus \ms{O}_{{\bf P}^1}(\alpha_0)$ is semistable. If $g \geq 1$, one can prove the statement by induction on $\alpha$ by using Lemma \ref{lemma_ss_fiber}. We leave this as an exercise.
\end{proof}

\smallskip

{\it Step 4.} We complete the proof of the theorem. Since $\ms{L}(d,n,m)$ is nonempty by assumption, there is a flat family of curves $\wt{\ms{C}} \rightarrow \Delta$, where $\wt{\ms{C}}$ is a nontrivial section of $|\ms{O}_{\wt X}(dH - \sum m E_i)|$. We are interested in estimating $\mu = \op{mult}_{\wt{C}/\wt{X}}(\wt{\ms{C}})$, which of course is the same as $\mu = \op{mult}_{C/X}(\ms{C})$ where $\ms{C}$ is the image of $\wt{\ms{C}}$ in $X$. Obviously,
$$\mu \leq \frac d k. \eqno(\sharp)$$
By Lemma \ref{lemma_ss} and Cor. \ref{basic_cor}, we have:
$$\underbrace{\op{slope}(\ms{N}_{\wt{C}/\wt{X}})}_{\alpha/2} \cdot \mu \geq \underbrace{- \wt{C} \cdot \wt{\ms{C}} }_{nm - kd}.\eqno(\flat)$$
Combining $(\sharp)$ and $(\flat)$, we get:
$$\frac \alpha 2 \frac d k \geq n m - k d.$$
One can easily check that this is equivalent to the inequality in the theorem. See also Lemma \ref{lemma_B}(a) in the Appendix.
\end{proof}

\bigskip

\section{Reduction of Interpolation Problems to Ruled Surfaces}\label{sec_reduction}

We formalize some results from the previous section. Our result here is Theorem \ref{thm2} which will be used through the rest of the paper.

\begin{notation}{\it A marked surface} $(S;P_1,\dots,P_n)$ is simply a surface $S$ together with $n$ distinct points $P_1,\dots,P_n$ on $S$.
\end{notation}

\begin{notation} Let $S=({\bf P}(\ms{E});P_1,\dots,P_n)$ be a marked ruled surface over $C$. For any integers $(\mu,b,\widehat m)$ and a line bundle $\g{b}$ of degree $b$ on $C$, we denote the line bundle
$$\ms{L}_S(\mu,\g{b},\wh m) = \ms{O}_{\wt S}(\mu - \g{b} f - \sum \wh m e_i)$$
on the blowup $\pi: \wt{S} \rightarrow S$ at the points $P_1,\dots,P_n$, with exceptional divisors $e_1,\dots,e_n$. Here we denote $\ms{O}_{\wt S}(\mu) = \pi^* \ms{O}_S(\mu)$.
\end{notation}

\begin{lemma}\label{lemma_chi} We have
$$\chi(\ms{L}_S(\mu,\g{b},\wh m)) = (\mu+1)(\tfrac \alpha 2 \mu - b - g + 1) - n \tbinom { \wh m + 1 } 2$$
where $\alpha = \deg(\ms{E})$ and $g$ is the genus of $C$.
\end{lemma}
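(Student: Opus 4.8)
The plan is to compute $\chi$ in two stages: first strip off the contribution of the blown-up points, reducing to a line bundle on the ruled surface $S$ itself, and then push forward along $p:S\to C$ and invoke Riemann--Roch on the curve.

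\textbf{Step 1 (blowup reduction).} For the blowup $\pi:\wt S\to S$ of a smooth surface at the $n$ distinct points $P_i$, with exceptional curves $e_i$, and any line bundle $\ms M$ on $S$, one has $\chi\big(\pi^*\ms M\otimes\ms O_{\wt S}(-\textstyle\sum\wh m e_i)\big)=\chi(\ms M)-n\tbinom{\wh m+1}{2}$. This is proved one point at a time, using the exact sequences $0\to\pi^*\ms M(-(j{+}1)e_i)\to\pi^*\ms M(-je_i)\to\ms O_{e_i}(-je_i)\to 0$ (the restriction of $\pi^*\ms M$ to $e_i$ is trivial) together with $\ms O_{e_i}(-e_i)\cong\ms O_{{\bf P}^1}(1)$, so that $\chi(\ms O_{e_i}(-je_i))=j+1$, and summing over $j=0,\dots,\wh m-1$ and over the $n$ points. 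Applying this with $\ms M=\ms O_S(\mu)\otimes p^*\ms O_C(-\g b)$ reduces the claim to $\chi\big(S,\ms O_S(\mu)\otimes p^*\ms O_C(-\g b)\big)=(\mu+1)(\tfrac\alpha2\mu-b-g+1)$.

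\textbf{Step 2 (pushforward to $C$).} For $\mu\geq 0$ the fibrewise cohomology $H^1({\bf P}^1,\ms O(\mu))$ vanishes, so by cohomology and base change and the projection formula $R^q p_*\big(\ms O_S(\mu)\otimes p^*\ms O_C(-\g b)\big)=0$ for $q>0$, while $p_*\big(\ms O_S(\mu)\otimes p^*\ms O_C(-\g b)\big)=\op{Sym}^\mu\ms E\otimes\ms O_C(-\g b)$ (using the paper's convention $S={\bf P}(\ms E)=\op{Proj}\bigoplus_\mu\op{Sym}^\mu\ms E$). Hence the Leray spectral sequence collapses and $\chi\big(S,\ms O_S(\mu)\otimes p^*\ms O_C(-\g b)\big)=\chi\big(C,\op{Sym}^\mu\ms E\otimes\ms O_C(-\g b)\big)$. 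Now $\op{Sym}^\mu\ms E$ has rank $\mu+1$ and degree $\tfrac\alpha2\mu(\mu+1)$ — this degree is computed either from the Grothendieck relation $\xi^2=\deg\ms E=\alpha$ where $\xi=c_1(\ms O_S(1))$, or directly by summing the degrees of the symmetric powers of a formal splitting of $\ms E$, which telescopes to $\tfrac\alpha2\mu(\mu+1)$ independently of the splitting type. Twisting by $\ms O_C(-\g b)$ drops the degree by $b(\mu+1)$, and Riemann--Roch on $C$ gives $\chi=\tfrac\alpha2\mu(\mu+1)-b(\mu+1)+(\mu+1)(1-g)=(\mu+1)(\tfrac\alpha2\mu-b-g+1)$, as required.

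\textbf{Step 3 (arbitrary integers).} Both sides of the asserted identity are polynomial in $(\mu,b,\wh m)$ — the left side because $\chi$ of a line bundle on a smooth projective surface depends only on its numerical class, which here is linear in these parameters, and the right side visibly. Steps 1--2 give equality for all $\mu\geq 0$ and all $b,\wh m$, so the identity holds for all integer values. The one point to watch is the ${\bf P}(\ms E)$ versus ${\bf P}(\ms E^\vee)$ normalization: with the paper's convention one gets $p_*\ms O_S(\mu)=\op{Sym}^\mu\ms E$ and $\xi^2=+\deg\ms E$, and it is precisely this sign that yields $+\tfrac\alpha2\mu$ rather than $-\tfrac\alpha2\mu$ in the final formula. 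An alternative, uniform-in-$\mu$ route bypassing Step 3 is to apply Riemann--Roch directly on $\wt S$, using $\chi(\ms O_{\wt S})=\chi(\ms O_S)=1-g$, $K_{\wt S}\equiv-2\xi+p^*(K_C+\det\ms E)+\sum e_i$, and the intersection numbers $\xi^2=\alpha$, $\xi\cdot f=1$, $e_i^2=-1$; this is a direct, if slightly more mechanical, computation.
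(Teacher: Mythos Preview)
Your proof is correct, but the route differs from the paper's. The paper applies Riemann--Roch directly on the surface $S$: it records $K_S\equiv -2C_1+(2g-2+\alpha)f$ (Hartshorne V.2.10), $p_a(S)=-g$ (Hartshorne V.2.5), and the intersection numbers $C_1^2=\alpha$, $C_1\cdot f=1$, $f^2=0$, then expands $\tfrac12 D\cdot(D-K_S)+1+p_a(S)$ with $D=\mu C_1-\g b f$; the blowup term $-n\binom{\wh m+1}{2}$ is taken as understood. You instead strip off the exceptional contribution by exact sequences on $\wt S$, then push $\ms O_S(\mu)\otimes p^*\ms O_C(-\g b)$ forward along $p$ and invoke Riemann--Roch on $C$ via $\deg\op{Sym}^\mu\ms E=\tfrac\alpha2\mu(\mu+1)$. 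Your approach trades knowledge of $K_S$ and $p_a(S)$ for the projection formula and vanishing of $R^1p_*$; it has the minor cost of the polynomiality patch in Step~3 to cover $\mu<0$, which the surface Riemann--Roch argument handles uniformly. Amusingly, the ``alternative, uniform-in-$\mu$ route'' you sketch at the end of Step~3 is essentially the paper's proof.
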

\begin{proof} Denote by $C_1$ the class $c_1(\ms{O}_S(1))$. It follows from (\cite{H}, Lemma V.2.10), that
$$K_S \equiv -2C_1 + (2g-2 + \alpha)f.$$
By the Riemann-Roch formula,
\begin{align*}
\chi(\ms{O}_S(\mu C_1 - \g{b}f)) &= \frac 1 2 (\mu C_1 - \g b f)\cdot (\mu C_1 - \g b f - K_S) + 1 + p_a(S).
\end{align*}
We have $p_a(S) = -g$ (\cite{H}, Cor. V.2.5). The lemma now follows from $C_1^2 = \alpha, C_1 \cdot f = 1$ and $f^2 = 0$.
\end{proof}

\begin{notation} Let $C$ be a smooth curve, $A$ a line bundle on $C$ and let $\xi \in \op{Ext}^1(A,\ms{O}_C)$ corresponding to an extension $$0 \rightarrow \ms{O}_C \rightarrow \ms{E} \rightarrow A \rightarrow 0.$$  We denote by $$S(C,A,\xi)$$ the ruled surface $S = {\bf P}(\ms{E})$ and we identify $C$ with the section determined by the short exact sequence. Note that $C \sim \ms{O}_S(1)$ and $\ms{O}_S(C) \otimes \ms{O}_C \cong A$.
\end{notation}

The following theorem allows to reduce interpolation problems on ${\bf P}^2$ to certain interpolation problems on ruled surfaces.

\begin{theorem}\label{thm2} Let $n$ be a non-square positive integer. Write $n=k^2+\alpha$ with $k \geq 1$ and $\alpha \geq 0$. Consider the linear system $\ms{L}(d,n,m)$ for some positive integers $d$ and $m$. Fix a smooth curve $C$ of degree $k$ in ${\bf P}^2$. Let $S = S(C,A,\xi; \{P_i\})$ be a marked ruled surface where:
\begin{itemize}
\item $A$ is any line bundle of degree $\alpha$ on $C$;
\item $\xi \in \op{Ext}^1(A,\ms{O}_C)$ is any element;
\item $P_1,\dots,P_n$ are distinct points on $C \subset S$ such that $\sum P_i \sim A + kH.$
\end{itemize}
Then, for any $\mu$, we have
$$h^0 (\ms{L}_{{\bf P}^2}(d,n,m)) \leq h^0 (\ms{L}_S(\mu,\g{b},\wh m)) + h^0(\ms{O}_{{\bf P}^2}(\epsilon H)) - h^0(\ms{O}_C(\epsilon H))$$
where:
\begin{itemize}
\item $\g{b} = \ms{O}_C(\sum mP_i - dH)$ of degree $b = nm - kd$;
\item $\wh m = \mu - m$;
\item $\epsilon = d - k \mu$.
\end{itemize}
\end{theorem}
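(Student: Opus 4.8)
The plan is to carry out a degeneration in the spirit of Section~\ref{sec_first}, but keeping track of cohomology groups rather than just the multiplicity $\mu$. First I would take $X={\bf P}^2\times\Delta$, blow up $C\subset X_0$ to obtain $X'\to X$ whose exceptional divisor is the ruled surface $S'={\bf P}(\ms{N}_{C/X})$, so that the central fibre of $X'$ is ${\bf P}^2\cup_{C'}S'$ — the strict transform of $X_0$ glued to $S'$ along the section $C'$. I would then choose relative points $\ms{P}_i\to\Delta$ specializing to $P_i\in C$ so that their strict transforms meet $S'$ at points $P_i'$ on the fibres over $P_i$, lying off $C'$; by Lemma~\ref{lemma_xi} and the hypothesis $\sum P_i\sim A+kH$ these $P_i'$ can be chosen so that the elementary transform of $S'$ at them is exactly the surface $S=S(C,A,\xi)$ of the statement. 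Blowing up those strict transforms, $X''\to X'$, with exceptional divisors $E_i$, the decisive point — and the reason for blowing up $C$ \emph{before} the points — is that the $P_i'$ avoid $C'$, so that the central fibre of $X''$ is the reduced nodal surface
$$X_0''\;=\;{\bf P}^2\ \cup_C\ \wt S,$$
where $\wt S\to S$ is the blowup of $S$ at $P_1,\dots,P_n$ (realized here as $S'$ blown up at $P_1',\dots,P_n'$, the intermediate surface of Fig.~\ref{fig_transform1}), and the ${\bf P}^2$-component carries no base points.

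Second, I would put $\ms{L}=\ms{O}_{X''}(dH-\sum mE_i-\mu\,\wt S)$, with $\wt S$ now the corresponding component of the central fibre. As no component of the central fibre meets a general fibre, $\ms{L}$ restricts on $X''_t$ ($t\ne0$) to $\ms{O}(dH-\sum m\ell_i)$ on a blowup of ${\bf P}^2$ at $n$ points, whence $h^0(\ms{L}|_{X''_t})\ge h^0(\ms{L}_{{\bf P}^2}(d,n,m))$; by semicontinuity $h^0(\ms{L}_{{\bf P}^2}(d,n,m))\le h^0(\ms{L}|_{X_0''})$. Since $X_0''$ is two smooth surfaces meeting transversally along $C$, the Mayer--Vietoris sequence $0\to\ms{L}|_{X_0''}\to\ms{L}|_{{\bf P}^2}\oplus\ms{L}|_{\wt S}\to\ms{L}|_C\to0$ is exact, so
$$h^0(\ms{L}|_{X_0''})=h^0(\ms{L}|_{{\bf P}^2})+h^0(\ms{L}|_{\wt S})-\operatorname{rank}\bigl(H^0(\ms{L}|_{{\bf P}^2})\oplus H^0(\ms{L}|_{\wt S})\to H^0(\ms{L}|_C)\bigr),$$
and it remains to identify the three restrictions.

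Third, the computations. On the ${\bf P}^2$-component the $E_i$ do not appear and $\wt S$ restricts to $\ms{O}_{{\bf P}^2}(C)=\ms{O}_{{\bf P}^2}(k)$, so $\ms{L}|_{{\bf P}^2}=\ms{O}_{{\bf P}^2}(\epsilon H)$ with $\epsilon=d-k\mu$; hence $\ms{L}|_C=\ms{O}_C(\epsilon H)$, and $H^0(\ms{O}_{{\bf P}^2}(\epsilon H))\to H^0(\ms{O}_C(\epsilon H))$ is surjective because $H^1(\ms{O}_{{\bf P}^2}(\epsilon-k))=0$, so the rank above equals $h^0(\ms{O}_C(\epsilon H))$. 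On $\wt S$ the computation is more delicate: from $\ms{N}_{\wt S/X''}=\rho^*\ms{O}_{S'}(-1)$ ($\rho\colon\wt S\to S'$) one gets $\ms{O}_{X''}(-\mu\wt S)|_{\wt S}=\rho^*\ms{O}_{S'}(\mu)$, while $dH|_{\wt S}$ is the pullback of $\ms{O}_C(dH)$ and $E_i|_{\wt S}$ is the exceptional curve over $P_i'$; translating these across the elementary transform $S'\dashrightarrow S$ — under which $\rho^*\ms{O}_{S'}(1)$ becomes $\ms{O}_{\wt S}(1)\otimes\ms{O}_{\wt S}(-\sum e_i)$ and the exceptional over $P_i'$ has class $f-e_i$ — collapses $\ms{L}|_{\wt S}$ to $\ms{O}_{\wt S}(\mu-\g{b}f-\sum\wh m\,e_i)=\ms{L}_S(\mu,\g{b},\wh m)$, with $\g{b}=\ms{O}_C(\sum mP_i-dH)$ and $\wh m=\mu-m$. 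Substituting the three values into the displayed equality yields the asserted inequality.

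I expect the main difficulty to be twofold. Conceptually it is making the right construction — blowing up $C$ before the relative points, so that the ${\bf P}^2$-component of the central fibre is base-point free and the \emph{blown-up} surface $\wt S$ appears; were one to blow up the points first, as in Section~\ref{sec_first}, the central fibre would instead be a blowup of ${\bf P}^2$ glued to $S$ itself, yielding only the weaker bound with $h^0(\ms{O}_S(\mu-\g{b}f))$ in place of $h^0(\ms{L}_S(\mu,\g{b},\wh m))$. Technically it is the book-keeping of line bundles through the elementary transform $S'\dashrightarrow S$ in the third step, which is exactly what forces the precise definitions of $\g{b}$, $\wh m$ and $\epsilon$.
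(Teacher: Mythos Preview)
Your approach is essentially the same as the paper's: both blow up $C$ first and then the (strict transforms of the) relative points to obtain a threefold whose central fibre is ${\bf P}^2\cup_C\wt S$, twist by $-\mu\wt S$, and combine semicontinuity with the Mayer--Vietoris sequence, using surjectivity of $H^0(\ms{O}_{{\bf P}^2}(\epsilon H))\to H^0(\ms{O}_C(\epsilon H))$. The only difference is cosmetic: you identify $\ms{L}|_{\wt S}$ by an explicit chase through the elementary transform $S'\dashrightarrow S$, whereas the paper pins it down more cheaply by checking that $\ms{L}|_{\wt S}$ and $\ms{L}_S(\mu,\g b,\wh m)$ have the same intersection numbers with $f$ and $e_i$ and the same restriction to $\wt C$ (Lemma~\ref{lemma_rest}).
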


The following lemma justifies our definition of $\ms{L}_S(\mu,\g{b},\wh m)$:
\begin{lemma}\label{lemma_rest} In the setting of the theorem, we have $\ms{L}_S(\mu,\g{b},\widehat m) \otimes \ms{O}_{\wt C} \cong \ms{O}_C(\epsilon H)$, where $\wt{C}$ is the strict transform of $C$ in $\wt S$.
\end{lemma}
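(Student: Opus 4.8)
The plan is to compute $\ms{L}_S(\mu,\g b,\wh m)\otimes\ms{O}_{\wt C}$ by restricting the bundle factor by factor along $\wt C$, using that $\wt C$ is essentially a copy of $C$ sitting inside the blowup. First I would observe that, since $\pi\colon\wt S\to S$ blows up the $n$ \emph{distinct} points $P_1,\dots,P_n$, each a smooth point of $C$, the induced map $\pi|_{\wt C}\colon\wt C\to C$ is an isomorphism; I use this identification throughout to move line bundles from $\wt C$ back to $C$. Along with it I would record two standard facts about the blowup: $\pi^*C=\wt C+\sum_i e_i$, and $\wt C$ meets each exceptional curve $e_i$ transversally in the single point over $P_i$ (and is disjoint from $e_j$ for $j\ne i$). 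From the latter, $\ms{O}_{\wt S}(e_i)\otimes\ms{O}_{\wt C}\cong\ms{O}_C(P_i)$ under the identification.

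Next I would write $\ms{L}_S(\mu,\g b,\wh m)=\pi^*\bigl(\ms{O}_S(\mu)\otimes p^*\ms{O}_C(-\g b)\bigr)\otimes\ms{O}_{\wt S}(-\sum_i\wh m\,e_i)$, where $p\colon S\to C$ is the projection, and restrict the three pieces separately. For the first, the definition of $S(C,A,\xi)$ gives $\ms{O}_S(1)\otimes\ms{O}_C\cong A$ for the section $C$, hence $\pi^*\ms{O}_S(\mu)\otimes\ms{O}_{\wt C}\cong A^{\otimes\mu}$. For the second, $p\circ\pi$ restricted to $\wt C$ is, under $\wt C\cong C$, the identity of $C$ (since $C$ is a section of $p$), so this piece restricts to $\ms{O}_C(-\g b)=\ms{O}_C(dH-\sum_i mP_i)$. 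The third piece restricts to $\ms{O}_C(-\sum_i\wh m\,P_i)$ by the first paragraph.

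Finally I would gather the three contributions into one divisor class on $C$ and simplify, using the hypotheses $\sum P_i\sim A+kH$ (that is, $A\sim\sum_i P_i-kH$ on $C$), $\wh m=\mu-m$, and $\epsilon=d-k\mu$. Each $P_i$ then appears with coefficient $\mu-m-\wh m$, which vanishes, while $H$ appears with coefficient $d-k\mu=\epsilon$; hence $\ms{L}_S(\mu,\g b,\wh m)\otimes\ms{O}_{\wt C}\cong\ms{O}_C(\epsilon H)$, as claimed.

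I do not expect a genuine obstacle here: the argument is almost pure bookkeeping. The two points that deserve a little care are the identification $\wt C\cong C$ together with the transversality of $\wt C$ with the $e_i$ (this is where distinctness of the $P_i$ and smoothness of $C$ enter), and the fact that $\ms{O}_S(1)$ restricts to $A$, not some twist, along the section $C$ — but the latter is part of the definition of $S(C,A,\xi)$. The one substantive input is the linear equivalence $\sum P_i\sim A+kH$, which is exactly what forces the $P_i$-contributions to cancel so that only $\ms{O}_C(\epsilon H)$ remains.
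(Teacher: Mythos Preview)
Your proof is correct and follows essentially the same approach as the paper's: both compute the restriction of $\ms{O}_S(\mu-\g{b}f)$ to $C$ as $\ms{O}_C(\mu A-\g{b})\cong\ms{O}_C(\sum\wh m P_i+\epsilon H)$ using $A\sim\sum P_i-kH$, then subtract the contribution $\sum\wh m P_i$ from the exceptional divisors. The paper's version is simply terser, combining your first two pieces into one step and leaving the $e_i$-restriction implicit.
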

\begin{proof} This is an easy computation. We have: $$\ms{O}_S(\mu C - \g{b} f) \otimes \ms{O}_C \cong \ms{O}_C(\mu A - \g b) \cong \ms{O}_C(\sum \wh m P_i + \epsilon H).$$ The lemma follows.
\end{proof}

\begin{proof}[Proof of Theorem] We will use the same construction as in the proof of Theorem \ref{thm1}.

\smallskip

{\it Step 1.} Consider the threefold $X = {\bf P}^2 \times \Delta$. We identify $C$ with a curve on $X_0$. Given \mbox{$\xi \in \op{Ext}^1(A,\ms{O}_C)$}, we specialize the $n$ relative points $\ms{P}_i$ to $P_i \in C$ as in Lemma \ref{lemma_xi}. As before, let $\wt{X} \rightarrow X$ be the blowup of the $\ms{P}_i$, and let $\wt C$ be the strict transform of $C$ in $\wt{X}$. We have the short exact sequence
$$0 \rightarrow \ms{N}_{\wt{X}_0/\wt{X}}|_{\wt{C}} \rightarrow \ms{N}_{\wt{C}/\wt{X}} \rightarrow \ms{N}_{\wt{C}/\wt{X}_0} \rightarrow 0$$
where $\ms{N}_{\wt{X}_0/\wt{X}}|_{\wt{C}} \cong \ms{O}_C$ and $\ms{N}_{\wt{C}/\wt{X}_0} \cong A$. By construction, the above extension corresponds to $\xi$. In particular, $S \cong {\bf P}(\ms{N}_{\wt C/\wt X})$, where $S = S(C,A,\xi)$ is the ruled surface we started with.

\smallskip

{\it Step 2.} Consider the threefold $Y$ obtained from $X = {\bf P}^2 \times \Delta$ by first blowing up $C$ (with exceptional divisor $S' = {\bf P}(\ms{N}_{C/X})$), followed by blowing up the strict transforms of the relative points $\ms{P}_1,\dots,\ms{P}_n$ (with exceptional divisors $E_1,\dots,E_n$). We view $Y \rightarrow \Delta$ as a flat family with general fiber $Y_t \cong \wt{X}_t$. The special fiber $Y_0$ is the union of two surfaces $\wt{S} \cup X_0$ meeting transversely along $\wt C \cong C$. \footnote{We denote by $\wt{C}$, resp. $C$, the same curve in $Y$ viewed as a divisor in $\wt S$, resp. $X_0$.}

This construction is related to the construction in Step 1 as follows. Let $\wt X'$ be the threefold obtained from $\wt{X}$ by blowing up $\wt{C}$ (with exceptional divisor $S = {\bf P}(\ms{N}_{\wt C/\wt X})$). Then, $Y$ can be obtained from $\wt{X}'$ by applying (-1)-transfers to the exceptional curves $e_1,\dots,e_n$ as on Fig. \ref{fig_transform2} (see also \cite{CM1}, Section 4.1). The induced map $\pi : \wt{S} \rightarrow S$ coincides with the corresponding map on Fig. \ref{fig_transform1}.

\begin{figure}[ht]
\includegraphics[scale=.8]{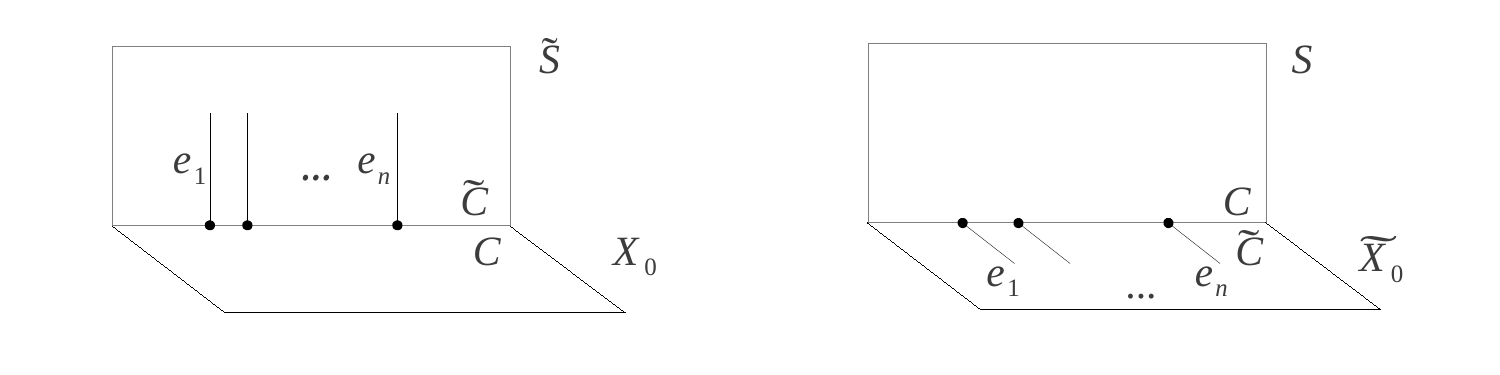}
\caption{(-1) transfers between $Y$ and $\wt X'$}
\label{fig_transform2}
\end{figure}

\smallskip

{\it Step 3.} For a given $\mu$, consider the following line bundle on $Y$:
$$\ms{L}_Y \cong \ms{O}_Y(dH - \sum m E_i - \mu \wt S).$$
We view $\ms{L}_Y$  as a flat family of line bundles with general fiber $\ms{L}_{Y_t} \cong \ms{O}_{\wt X_t} (dH - \sum m E_i)$. The special fiber $\ms{L}_{Y_0}$ is described by the following short exact sequence:
$$0 \rightarrow \ms{L}_{Y_0} \rightarrow \ms{L}_{\wt{S}} \oplus \ms{L}_{X_0} \rightarrow \ms{L}_C \rightarrow 0 \eqno(*)$$
where
\begin{align*}
\ms{L}_{\wt S} \cong \ms{L}_{Y} \otimes \ms{O}_{\wt S}; \ \ \ms{L}_{X_0} \cong \ms{O}_{{\bf P}^2}(\epsilon H); \ \ \ms{L}_C \cong \ms{O}_C(\epsilon H). \ \ \ \ 
\end{align*}

\begin{lemma} We have: $$\ms{L}_{\wt S} \cong \ms{L}_S(\mu,\g{b},\widehat m).$$
\end{lemma}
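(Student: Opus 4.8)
The plan is to restrict $\ms{L}_Y = \ms{O}_Y(dH - \sum mE_i - \mu\wt S)$ class by class to $\wt S$ and to match the outcome with the definition $\ms{L}_S(\mu,\g{b},\wh m) = \ms{O}_{\wt S}(\mu - \g{b}f - \sum\wh m\,e_i)$. First I would fix the geometry of $\wt S$. Realized as the strict transform of $S'$ in $Y = \op{Bl}_{\sqcup\wt{\ms P}_i}Y'$ (with $Y' = \op{Bl}_C X$), the blowdown $b\colon Y\to Y'$ exhibits $\wt S$ as $\op{Bl}_{P'_1,\dots,P'_n}S'$, where $P'_i = \wt{\ms P}_i\cap S'$; write $\sigma\colon\wt S\to S'$ for this map and $g_1,\dots,g_n$ for its exceptional curves. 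By the elementary transforms of Figure~\ref{fig_transform1}, $\wt S$ is at the same time the blowup $\rho\colon\wt S\to S = \op{Bl}_{P_1,\dots,P_n}S$, whose exceptional curves $e_1,\dots,e_n$ are the curves denoted $e_i$ in the definition; one checks that $e_i$ is the strict transform (under $\sigma$) of the fibre of $S'$ through $P'_i$, that $g_i$ is the strict transform (under $\rho$) of the fibre of $S$ through $P_i$, and hence that $e_i + g_i$ is a fibre of the ruling $p\colon\wt S\to C$.

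Next I would compute the three restrictions. (i) The composite $Y\to\mathbb P^2$ restricts on $\wt S$ to the ruling $p$ followed by $C\hookrightarrow\mathbb P^2$, so $\ms{O}_Y(H)|_{\wt S} = p^*\ms{O}_C(H)$; since $\g{b} = \ms{O}_C(m\sum P_i - dH)$ and $p^*P_i = e_i + g_i$, this gives $\ms{O}_Y(dH)|_{\wt S} = m\sum_i(e_i+g_i) - p^*\g{b}$. (ii) In the local model of $b$ the curve $\wt{\ms P}_i$ meets $S'$ transversally at $P'_i$, so $E_i$ meets $\wt S$ transversally, and with multiplicity one, along the exceptional curve $g_i$ of $\op{Bl}_{P'_i}S'$; hence $\ms{O}_Y(E_i)|_{\wt S} = \ms{O}_{\wt S}(g_i)$. (iii) Since no $\wt{\ms P}_i$ is contained in $S'$, the total transform $b^*S'$ equals the strict transform $\wt S$, so $\ms{O}_Y(\wt S) = b^*\ms{O}_{Y'}(S')$, and therefore $\ms{N}_{\wt S/Y} = \sigma^*\ms{N}_{S'/Y'} = \sigma^*\ms{O}_{S'}(-1)$, using that $\ms{N}_{S'/Y'}$ is the tautological bundle, exactly as in the proof of the Basic Lemma. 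Assembling the three pieces yields $\ms{L}_{\wt S} = \mu\,\sigma^*\ms{O}_{S'}(1) + m\sum_i e_i - p^*\g{b}$.

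To finish I need the relation $\rho^*\ms{O}_S(1) = \sigma^*\ms{O}_{S'}(1) + \sum_i e_i$ on $\wt S$. This I would get by a short intersection-theory computation in $\op{Pic}(\wt S)$: writing $\rho^*\ms{O}_S(1) = a\,\sigma^*\ms{O}_{S'}(1) + b\,f + \sum c_i e_i$ and imposing that it has degree $0$ on each $e_i$ (being contracted by $\rho$), degree $1$ on a general fibre, and self-intersection $\alpha = \deg A = (\ms{O}_S(1))^2$ — while $(\sigma^*\ms{O}_{S'}(1))^2 = (C')^2 = -k^2$ and $n = k^2+\alpha$ — forces $a=1$, $c_i=1$, $b=0$; alternatively one invokes the standard behaviour of $\ms{O}(1)$ under the elementary transform attached to the sequence $0\to\ms{N}_{C/X}\to\ms{N}_{\wt C/\wt X}\to\oplus\mathbb C(P_i)\to 0$. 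Substituting $\sigma^*\ms{O}_{S'}(1) = \rho^*\ms{O}_S(1) - \sum e_i$ into the expression for $\ms{L}_{\wt S}$ turns it into $\mu\,\rho^*\ms{O}_S(1) - p^*\g{b} - (\mu-m)\sum e_i = \ms{L}_S(\mu,\g{b},\wh m)$, since $\wh m = \mu-m$. As a consistency check one restricts both sides to $\wt C$ and recovers $\ms{O}_C(\epsilon H)$, in agreement with Lemma~\ref{lemma_rest}.

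I expect the main difficulty to be precisely this birational bookkeeping: verifying that $\ms{N}_{\wt S/Y}$ acquires no extra exceptional twist (which hinges on $\wt{\ms P}_i\not\subset S'$, so that total and strict transform of $S'$ coincide), identifying $E_i\cap\wt S$ with $g_i$ with multiplicity one, and correctly comparing the two tautological bundles $\ms{O}_S(1)$ and $\ms{O}_{S'}(1)$ across the elementary transform. Once the intersection numbers on $\wt S$ are in place, the remainder is linear algebra in $\op{Pic}(\wt S)$.
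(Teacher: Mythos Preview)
Your computation is correct, and the birational bookkeeping you carry out checks: the identification $p^*P_i = e_i + g_i$, the equality $b^*S' = \wt S$ (since $\wt{\ms P}_i$ meets $S'$ only in a point, not along a curve, so the divisor $S'$ has multiplicity zero along the blowup center), and the elementary-transform relation $\rho^*\ms{O}_S(1) = \sigma^*\ms{O}_{S'}(1) + \sum e_i$ all hold, and the final substitution gives $\ms{L}_S(\mu,\g{b},\wh m)$ on the nose.

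The paper, however, takes a much shorter route. Rather than restricting $dH$, $E_i$, $\wt S$ to $\wt S$ one by one and then translating from the $S'$-picture to the $S$-picture, it simply observes that a line bundle on $\wt S$ is determined by three pieces of data: its degree on a fibre $f$, its degree on each exceptional curve $e_i$, and its restriction to the section $\wt C$ (this is because $\op{Pic}(\wt S) \cong \mathbb Z\,\ms{O}_{\wt S}(1) \oplus p^*\op{Pic}(C) \oplus \bigoplus_i \mathbb Z e_i$). For $\ms{L}_{\wt S}$ one reads off $\ms{L}_{\wt S}\cdot f = \mu$, $\ms{L}_{\wt S}\cdot e_i = \wh m$, and $\ms{L}_{\wt S}|_{\wt C} \cong \ms{O}_C(\epsilon H)$ directly from the construction; by Lemma~\ref{lemma_rest} the bundle $\ms{L}_S(\mu,\g{b},\wh m)$ has the same invariants, and the isomorphism follows. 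So what you present as a final consistency check is, in the paper, essentially the entire argument. Your approach has the merit of being self-contained and making the elementary-transform comparison of tautological bundles explicit; the paper's approach avoids all of that by exploiting Lemma~\ref{lemma_rest}, which was proved beforehand for exactly this purpose.
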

\begin{proof} The line bundle $\ms{L}_{\wt S}$ has the following properties:
$$\ms{L}_{\wt S} \otimes \ms{O}_{\wt C} \cong \ms{O}_C(\epsilon H); \ \ \ms{L}_{\wt S} \cdot f = \mu; \ \ \ms{L}_{\wt S} \cdot e_i = \wh m.$$
From Lemma \ref{lemma_rest}, $\ms{L}_S(\mu,\g{b},\widehat m)$ has the same properties. It follows that $\ms{L}_{\wt S} \cong \ms{L}_S(\mu,\g{b},\widehat m)$.
\end{proof}

To complete the proof of the theorem, take cohomology in $(*)$:
$$0 \rightarrow H^0(\ms{L}_{Y_0}) \rightarrow H^0(\ms{L}_{\wt S}) \oplus H^0(\ms{O}_{{\bf P}^2}(\epsilon H)) \rightarrow H^0(\ms{O}_C(\epsilon H)) \mathop {\rightarrow}^\delta H^1(\ms{L}_{Y_0})$$
The restriction $H^0(\ms{O}_{{\bf P}^2}(\epsilon H)) \rightarrow H^0(\ms{O}_C(\epsilon H))$ is surjective. Hence, the coboundary map $\delta = 0$. The theorem now follows from the semicontinuity principle applied to $h^0(\ms{L}_{Y_t})$.
\end{proof}

\begin{corollary}  Assume the above setting.
\begin{enumerate}
\item For any $\mu$, we have:
$$\chi(\ms{L}_{{\bf P}^2}(d,n,m)) = \chi(\ms{L}_S(\mu,\g{b},\wh m)) + \chi(\ms{O}_{{\bf P}^2}(\epsilon H)) - \chi(\ms{O}_C(\epsilon H)).$$
\item For any $\mu \leq d/k$ (hence $\epsilon \geq 0$), we have:
$$\chi(\ms{L}_{{\bf P}^2}(d,n,m)) = \chi(\ms{L}_S(\mu,\g{b},\wh m)) + h^1(\ms{O}_C(\epsilon H)).$$
\end{enumerate}
\end{corollary}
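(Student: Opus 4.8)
The plan is to deduce the Corollary directly from the exact sequence $(*)$ in the proof of Theorem \ref{thm2}, together with the Euler-characteristic version of the formula already established at the level of $h^0$. For part (a), I would take the alternating sum of Euler characteristics in the short exact sequence
$$0 \rightarrow \ms{L}_{Y_0} \rightarrow \ms{L}_{\wt S} \oplus \ms{L}_{X_0} \rightarrow \ms{L}_C \rightarrow 0,$$
which gives $\chi(\ms{L}_{Y_0}) = \chi(\ms{L}_{\wt S}) + \chi(\ms{L}_{X_0}) - \chi(\ms{L}_C)$. Since $\ms{L}_Y \rightarrow \Delta$ is flat, the Euler characteristic is constant in the family, so $\chi(\ms{L}_{Y_0}) = \chi(\ms{L}_{Y_t}) = \chi(\ms{O}_{\wt X_t}(dH - \sum m E_i)) = \chi(\ms{L}_{{\bf P}^2}(d,n,m))$, the last equality because blowing up $n$ points and twisting down by $m$ times the exceptional divisors changes $\chi$ by exactly $-n\binom{m+1}{2}$ on both sides (equivalently, one computes directly on $\wt X_t$). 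Substituting $\ms{L}_{\wt S} \cong \ms{L}_S(\mu,\g{b},\wh m)$, $\ms{L}_{X_0} \cong \ms{O}_{{\bf P}^2}(\epsilon H)$ and $\ms{L}_C \cong \ms{O}_C(\epsilon H)$ from the lemmas in that proof yields (a).

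For part (b), I would observe that when $\mu \leq d/k$ we have $\epsilon = d - k\mu \geq 0$, so $\ms{O}_{{\bf P}^2}(\epsilon H)$ is a nonnegative twist of the structure sheaf on ${\bf P}^2$ and hence has $h^1 = h^2 = 0$, giving $\chi(\ms{O}_{{\bf P}^2}(\epsilon H)) = h^0(\ms{O}_{{\bf P}^2}(\epsilon H))$. On the curve $C$ (of genus $g$), $\chi(\ms{O}_C(\epsilon H)) = h^0(\ms{O}_C(\epsilon H)) - h^1(\ms{O}_C(\epsilon H))$. Therefore
$$\chi(\ms{O}_{{\bf P}^2}(\epsilon H)) - \chi(\ms{O}_C(\epsilon H)) = h^0(\ms{O}_{{\bf P}^2}(\epsilon H)) - h^0(\ms{O}_C(\epsilon H)) + h^1(\ms{O}_C(\epsilon H)) = h^1(\ms{O}_C(\epsilon H)),$$
where the middle cancellation uses that the restriction $H^0(\ms{O}_{{\bf P}^2}(\epsilon H)) \rightarrow H^0(\ms{O}_C(\epsilon H))$ is surjective — exactly the fact already invoked in the proof of Theorem \ref{thm2} to kill the coboundary $\delta$ — so that $h^0(\ms{O}_{{\bf P}^2}(\epsilon H)) - h^0(\ms{O}_C(\epsilon H))$ equals the dimension of the kernel, which is $h^0(\ms{O}_{{\bf P}^2}(\epsilon H - C)) = h^0(\ms{O}_{{\bf P}^2}((\epsilon-k)H))$; one checks this matches on the nose. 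Combining with (a) gives (b).

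The only genuinely nonroutine point — and the step I would be most careful about — is the identification $\chi(\ms{L}_{Y_0}) = \chi(\ms{L}_{{\bf P}^2}(d,n,m))$: one must be sure that the flat family $\ms{L}_Y \rightarrow \Delta$ has general fiber $\ms{O}_{\wt X_t}(dH - \sum m E_i)$ with the same numerical invariants as the blowup of ${\bf P}^2$ at $n$ points, so that $\chi(\ms{L}_{Y_t}) = \binom{d+2}{2} - n\binom{m+1}{2} = \chi(\ms{L}_{{\bf P}^2}(d,n,m))$. This is immediate from the constructions in the proof of Theorem \ref{thm2}, where $\wt X_t$ is precisely such a blowup, but it is worth stating explicitly rather than leaving to the reader. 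Everything else is bookkeeping with Riemann--Roch on ${\bf P}^2$ and on $C$, and with the surjectivity of the restriction map on global sections that was already used.
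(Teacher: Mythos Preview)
Your approach for both parts is exactly the paper's: (a) is additivity of $\chi$ in the sequence $(*)$ together with constancy of $\chi(\ms{L}_{Y_t})$ in the flat family, and (b) is deduced from (a). The paper's own proof of (b) is the single sentence ``This follows from (a),'' so you are simply spelling out what is intended.

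There is, however, a loose end in your derivation of (b). You assert
\[
h^0(\ms{O}_{{\bf P}^2}(\epsilon H)) - h^0(\ms{O}_C(\epsilon H)) + h^1(\ms{O}_C(\epsilon H)) \;=\; h^1(\ms{O}_C(\epsilon H)),
\]
but surjectivity of the restriction map only tells you that the first difference equals the kernel dimension $h^0(\ms{O}_{{\bf P}^2}((\epsilon-k)H))$, as you yourself observe. For this kernel to vanish one needs $\epsilon < k$, not merely $\epsilon \ge 0$; your ``one checks this matches on the nose'' glosses over precisely this point. In fact the identity in (b) is \emph{false} once $\epsilon \ge k$: already at $\epsilon = k$ one computes $\chi(\ms{O}_{{\bf P}^2}(kH)) - \chi(\ms{O}_C(kH)) = 1$ while $h^1(\ms{O}_C(kH)) = 0$. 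So the hypothesis in (b) should really read $0 \le \epsilon < k$ (equivalently $\mu = \lfloor d/k \rfloor$), which is the only case the paper ever uses (cf.\ Cor.~\ref{cor_thm2}). With that extra constraint in place, your argument is complete and coincides with the paper's.
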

\begin{proof} (a) This follows from the short exact sequence $(*)$ and the fact that $\chi(\ms{L}_{Y_t})$ is a constant function of $t$.

(b) This follows from (a).
\end{proof}

\begin{corollary}\label{cor_thm2} Suppose the linear system $\ms{L}(d,n,m)$ is nonempty. Then $|\ms{L}_S(\mu,\g{b},\wh m)|$ is nonempty with $\mu = \lfloor d/k \rfloor$.
\end{corollary}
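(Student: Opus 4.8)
The plan is to read off the statement directly from Theorem \ref{thm2} by choosing the only natural value of $\mu$ that keeps everything in the ``effective'' range. Concretely, I would set $\mu = \lfloor d/k \rfloor$, which forces $\epsilon = d - k\mu \geq 0$, and then feed this choice into the inequality
$$h^0(\ms{L}_{{\bf P}^2}(d,n,m)) \leq h^0(\ms{L}_S(\mu,\g{b},\wh m)) + h^0(\ms{O}_{{\bf P}^2}(\epsilon H)) - h^0(\ms{O}_C(\epsilon H)).$$
Since $\ms{L}(d,n,m)$ is nonempty by hypothesis, $h^0(\ms{L}_{{\bf P}^2}(d,n,m)) \geq 1$.

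The key observation is then that because $\epsilon \geq 0$, the restriction map $H^0(\ms{O}_{{\bf P}^2}(\epsilon H)) \to H^0(\ms{O}_C(\epsilon H))$ is surjective (this was already used in the proof of Theorem \ref{thm2}, and holds because $C$ is a plane curve so $H^1(\ms{I}_{C}(\epsilon H)) = H^1(\ms{O}_{{\bf P}^2}((\epsilon - k)H)) = 0$). Hence $h^0(\ms{O}_{{\bf P}^2}(\epsilon H)) - h^0(\ms{O}_C(\epsilon H)) = h^0(\ms{O}_{{\bf P}^2}(\epsilon H) \otimes \ms{I}_C) = h^0(\ms{O}_{{\bf P}^2}((\epsilon-k)H))$, which is the dimension of the space of plane curves of degree $\epsilon$ containing $C$. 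I would then argue that this quantity cannot already ``absorb'' the section we started with: a curve in $\ms{L}_{{\bf P}^2}(d,n,m)$ has multiplicity $m \geq 1$ at each of the $n \geq 2$ general points $\ms{P}_i$, whereas a curve of the form $C + (\text{degree } \epsilon - k)$ passes through the $n$ general points $P_i \in C$ only with multiplicity $1$ there (for a general residual curve). More cleanly: the degeneration was set up so that $\ms{L}_{Y_0}$ restricted to $X_0$ is exactly $\ms{O}_{{\bf P}^2}(\epsilon H)$ with the multiplicity conditions transferred entirely to the $\wt S$ component, so a nonzero limiting section whose $X_0$-part lies in the image of $H^0(\ms{O}_{{\bf P}^2}(\epsilon H))$ must have nonzero $\wt S$-part. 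Therefore $h^0(\ms{L}_S(\mu,\g{b},\wh m)) \geq 1$, i.e. $|\ms{L}_S(\mu,\g{b},\wh m)|$ is nonempty.

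I expect the only subtle point to be the bookkeeping that separates ``the section could come entirely from $\ms{O}_{{\bf P}^2}(\epsilon H)$'' from ``the section genuinely contributes to $\ms{L}_S$''. The safest route is to avoid this dichotomy altogether and instead combine the $h^0$ inequality of Theorem \ref{thm2} with the Euler-characteristic identity of Corollary (part (b)): since $\mu = \lfloor d/k \rfloor \leq d/k$, part (b) gives $\chi(\ms{L}_{{\bf P}^2}(d,n,m)) = \chi(\ms{L}_S(\mu,\g{b},\wh m)) + h^1(\ms{O}_C(\epsilon H))$, and the surjectivity of the restriction map yields $h^1(\ms{O}_C(\epsilon H)) = h^1(\ms{O}_{{\bf P}^2}(\epsilon H)) - h^1(\ms{O}_{{\bf P}^2}((\epsilon - k)H)) + \dots$; pairing the cohomology long exact sequence of $(*)$ on the $Y_0$ level with semicontinuity for $h^0(\ms{L}_{Y_t})$ shows directly that $h^0(\ms{L}_{Y_0}) = h^0(\ms{L}_{\wt S}) \geq h^0(\ms{L}_{Y_t}) \geq 1$, because the coboundary $\delta$ vanishes and the $X_0$-contribution is killed by the surjectivity. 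This makes the conclusion $h^0(\ms{L}_S(\mu,\g{b},\wh m)) \geq 1$ immediate, and the corollary follows.
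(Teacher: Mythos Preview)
You are missing the one observation that makes the argument immediate: with $\mu = \lfloor d/k \rfloor$ you have $\epsilon = d - k\mu \in \{0,1,\dots,k-1\}$, so $\epsilon < k = \deg C$. Hence the restriction map $H^0(\ms{O}_{{\bf P}^2}(\epsilon H)) \to H^0(\ms{O}_C(\epsilon H))$ is not merely surjective but an \emph{isomorphism} (its kernel is $H^0(\ms{O}_{{\bf P}^2}((\epsilon-k)H)) = 0$). Plugging this into the inequality of Theorem~\ref{thm2} gives
\[
h^0(\ms{L}_S(\mu,\g{b},\wh m)) \;\geq\; h^0(\ms{L}_{{\bf P}^2}(d,n,m)) \;\geq\; 1,
\]
and you are done. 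This is exactly the paper's proof, phrased via the exact sequence $(*)$: the restriction being an isomorphism forces $h^0(\ms{L}_{Y_0}) = h^0(\ms{L}_{\wt S})$, and semicontinuity finishes.

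Because you never record $\epsilon < k$, your first paragraph stalls at $h^0(\ms{O}_{{\bf P}^2}((\epsilon-k)H))$ without noticing it vanishes, and then detours into an ad hoc discussion of whether the section can be ``absorbed'' by $C$ plus a residual curve --- unnecessary once you see that no degree-$\epsilon$ curve contains $C$. Your second paragraph compounds this: you repeatedly invoke \emph{surjectivity} of the restriction as the mechanism that kills the $X_0$-contribution, but surjectivity only gives $\delta = 0$ and the equality $h^0(\ms{L}_{Y_0}) = h^0(\ms{L}_{\wt S}) + h^0(\ms{O}_{{\bf P}^2}(\epsilon H)) - h^0(\ms{O}_C(\epsilon H))$. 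To conclude $h^0(\ms{L}_{\wt S}) \geq 1$ you need the difference on the right to be $\leq 0$, i.e.\ \emph{injectivity} of the restriction. So the logical structure of your ``safer route'' is inverted, and as written it does not close the argument.
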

\begin{proof} %It suffices to take $\mu$ as in the proof of Theorem \ref{thmA}.
Consider the long exact cohomology sequence associated to $(*)$. If $\mu = \lfloor d/k \rfloor$, the restriction $H^0(\ms{O}_{{\bf P}^2}(\epsilon H)) \rightarrow H^0(\ms{O}_C(\epsilon H))$ is an isomorphism. The claim follows.
\end{proof}

\bigskip

\section{Families of Ruled Surfaces}\label{sec_family}

We can use the degeneration technique from the previous section to reduce an interpolation problem on ${\bf P}^2$ to an interpolation problem on a certain ruled surface $S$. We would like to perform further degenerations to study the later problem. Our first goal is to define an object $\g{S}(Z,\ms{A},\xi) \rightarrow \Delta$ which is a relative analogue of $S(C,A,\xi)$. We conclude with a technical result (Prop. \ref{prop_smooth}) which will be used in Sections \ref{sec_eleven} and \ref{sec_last}. As usual, $\Delta$ denotes the open unit disk over ${\mathbb C}$.

\begin{notation} Let $\ms{A}$ be a torsion-free sheaf of rank 1 on $C \times \Delta$. Assume that $\ms{A} = \ms{A}' \otimes \ms{I}_W$ where $\ms{A}'$ is invertible and $\ms{I}_W$ is the ideal sheaf of a l.c.i. zero-dimensional subscheme $W \subset C \times \Delta$ (possibly $W=\emptyset$). Let $\xi \in \op{Ext}^1(\ms{A},\ms{O}_{C\times \Delta})$ be an element corresponding to an extension $$0 \rightarrow \ms{O}_{C \times \Delta} \rightarrow \ms{E} \rightarrow \ms{A} \rightarrow 0$$
such that $\ms{E}$ is locally free. We denote by $$\g{S}(Z,\ms{A},\xi) \rightarrow \Delta$$ the relative ruled surface $\g{S} = {\bf P}(\ms{E}) \rightarrow \Delta$ together with the subscheme $Z = {\bf P}(\ms{A})$ defined by the above exact sequence. In particular, $Z$ is a divisor of $\g{S}$ with $Z \sim \ms{O}_{\g S}(1)$. 
\end{notation}

We will assume that $W$ is supported on $C \times \{0\}$. In applications, $W$ will be reduced; however, everything we say in this section holds in the more general setting. 

\begin{lemma} In the above setting, the projection $p : Z \rightarrow C \times \Delta$ is just the blowup of $W \subset C \times \Delta$. Denote by $F$ the exceptional divisor of $p$. Then $p^* \ms{A} \cong p^* \ms{A}' \otimes \ms{O}_{Z}(-F) \cong \ms{O}_{\g{S}}(Z) \otimes \ms{O}_{Z}$.
\end{lemma}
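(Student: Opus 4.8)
The claim has three parts, and the plan is to verify each in turn, reading everything off the defining exact sequence $0 \to \ms{O}_{C \times \Delta} \to \ms{E} \to \ms{A} \to 0$ together with the identification $Z = {\bf P}(\ms{A})$ as the closed subscheme of $\g{S} = {\bf P}(\ms{E})$ cut out by the surjection $\ms{E} \twoheadrightarrow \ms{A}$.

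First I would establish that $p : Z \to C \times \Delta$ is the blowup of $W$. The point is that $Z = {\bf P}(\ms{A}) = \op{Proj}(\bigoplus_{\mu \geq 0} \op{Sym}^\mu \ms{A})$, and since $\ms{A} = \ms{A}' \otimes \ms{I}_W$ with $\ms{A}'$ invertible, twisting by the invertible sheaf $\ms{A}'$ does not change the $\op{Proj}$, so ${\bf P}(\ms{A}) \cong {\bf P}(\ms{I}_W)$ over $C \times \Delta$. Now ${\bf P}(\ms{I}_W) = \op{Proj}(\bigoplus_{\mu \geq 0} \op{Sym}^\mu \ms{I}_W)$, and because $W$ is a local complete intersection of codimension... well, $C \times \Delta$ is a surface and $W$ is zero-dimensional, so $W$ is a l.c.i. subscheme and the natural surjection $\op{Sym}^\mu \ms{I}_W \to \ms{I}_W^\mu$ is an isomorphism (this is the standard fact that for a regular sequence the symmetric algebra of the ideal agrees with the Rees algebra). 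Hence ${\bf P}(\ms{I}_W) = \op{Proj}(\bigoplus_{\mu \geq 0} \ms{I}_W^\mu) = \op{Bl}_W(C \times \Delta)$ by the very definition of the blowup as the Proj of the Rees algebra. This identifies $Z$ with the blowup and $F$ with its exceptional divisor.

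Next I would identify the pullback $p^*\ms{A}$ on $Z$. On the blowup $Z = \op{Bl}_W(C \times \Delta)$ we have the standard relation $p^{-1}\ms{I}_W \cdot \ms{O}_Z = \ms{O}_Z(-F)$, i.e. the inverse image ideal sheaf of $W$ is the invertible sheaf $\ms{O}_Z(-F)$. Tensoring by the line bundle $p^*\ms{A}'$ gives $p^*\ms{A} = p^*\ms{A}' \otimes p^{-1}\ms{I}_W \cdot \ms{O}_Z \cong p^*\ms{A}' \otimes \ms{O}_Z(-F)$, which is the first asserted isomorphism. (Strictly, I should note $p^*\ms{A}$ means $p^*$ of the sheaf $\ms{A} = \ms{A}'\otimes\ms{I}_W$, and since $p^*$ is right exact the image of $p^*\ms{I}_W \to \ms{O}_Z$ is exactly the inverse image ideal sheaf; here I am implicitly using the saturated/invertible version $p^*\ms{A}' \otimes \ms{O}_Z(-F)$, which is the line bundle that genuinely lives on $Z$.) For the second isomorphism, I would use the universal property of ${\bf P}(\ms{E})$: the surjection $\pi^*\ms{E} \twoheadrightarrow \ms{O}_{\g S}(1)$ on $\g S = {\bf P}(\ms{E})$ restricts, on the subscheme $Z$ defined by $\ms{E} \twoheadrightarrow \ms{A}$, to the surjection $p^*\ms{E}|_Z \twoheadrightarrow p^*\ms{A} \to \ms{O}_{\g S}(1)|_Z$ which factors through $p^*\ms{A}$ and is in fact an isomorphism $p^*\ms{A} \xrightarrow{\sim} \ms{O}_{\g S}(1)|_Z$ (this is precisely what it means for $Z = {\bf P}(\ms{A}) \hookrightarrow {\bf P}(\ms{E})$ to be the subscheme associated to the quotient $\ms{A}$). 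Finally, since $Z \sim \ms{O}_{\g S}(1)$ as stated in the defining notation, $\ms{O}_{\g S}(1)|_Z \cong \ms{O}_{\g S}(Z) \otimes \ms{O}_Z$, giving the last isomorphism.

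The main obstacle is purely bookkeeping around the non-invertible sheaf $\ms{A}$: one has to be careful that ${\bf P}(\ms{A})$ in the paper's convention ($\op{Proj}$ of the full symmetric algebra) really does coincide with the blowup, which rests on the l.c.i. hypothesis on $W$ making $\op{Sym}^\bullet \ms{I}_W = \op{Rees}(\ms{I}_W)$, and that all the twists by the invertible sheaf $\ms{A}'$ (which are harmless for $\op{Proj}$ but matter for the line-bundle identifications) are tracked consistently. Once the identification $Z \cong \op{Bl}_W(C\times\Delta)$ is in hand, the two displayed isomorphisms for $p^*\ms{A}$ are immediate from the universal properties of the blowup and of the projective bundle, so I do not expect any further difficulty.
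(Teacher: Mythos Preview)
Your proposal is correct and follows the same approach as the paper's proof, which reads in its entirety: ``Since $\ms{A}'$ is invertible, $Z = {\bf P}(\ms{A}) = {\bf P}(\ms{I}_W)$ which is exactly the definition of a blowup. The rest is clear.'' You have simply unpacked what the paper leaves implicit, in particular the point that the l.c.i.\ hypothesis on $W$ is what makes $\op{Sym}^\bullet \ms{I}_W$ coincide with the Rees algebra, and the tautological identifications of $\ms{O}_{\g S}(1)|_Z$ with both $p^*\ms{A}$ and $\ms{O}_{\g S}(Z)\otimes\ms{O}_Z$.
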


\begin{proof} Since $\ms{A}'$ is invertible, $Z = {\bf P}(\ms{A}) = {\bf P}(\ms{I}_W)$ which is exactly the definition of a blowup. The rest is clear.
\end{proof}

Consider the following question: given $\ms{A}$ as above, which elements $\xi \in \op{Ext}^1(\ms{A},\ms{O}_{C\times\Delta})$ correspond to a locally free extension $\ms{E}$? Clearly, if $W = \emptyset$, then $\ms{A}$ is locally free and so any $\xi$ will do. In the general case, we have:

\begin{proposition}\label{prop_serre} Let $\ms{A} = \ms{A}' \otimes \ms{I}_W$ be as above. Then, there is a natural exact sequence
$$0 \rightarrow H^1(C \times \Delta, (\ms{A}')^{-1}) \rightarrow \op{Ext}^1(\ms{A},\ms{O}_{C\times\Delta}) \mathop\rightarrow^{\delta} H^0(\ms{E}xt^1(\ms{A},\ms{O}_{C\times \Delta})) \rightarrow 0$$

Moreover, we have:

(a) $\ms{E}xt^1(\ms{A},\ms{O}_{C\times\Delta}) \cong \ms{E}xt^2(\ms{O}_W,\ms{O}_{C\times\Delta}) \cong \ms{O}_W$.

(b) The extension $\ms{E}$ corresponding to $\xi \in \op{Ext}^1(\ms{A},\ms{O}_{C\times \Delta})$ is locally free if and only if $\delta(\xi)$ generates the sheaf $\ms{E}xt^1(\ms{A},\ms{O}_{C\times\Delta})$.
\end{proposition}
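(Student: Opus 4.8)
The plan is to extract everything from the local-to-global spectral sequence for $\ms{E}xt$,
$$E_2^{p,q} = H^p\big(C\times\Delta,\ \ms{E}xt^q(\ms{A},\ms{O}_{C\times\Delta})\big) \Longrightarrow \op{Ext}^{p+q}(\ms{A},\ms{O}_{C\times\Delta}),$$
once the local $\ms{E}xt$-sheaves have been identified. First I would compute $\ms{E}xt^q(\ms{A},\ms{O}_{C\times\Delta})$ locally. Since $C\times\Delta$ is a smooth surface and $W$ is l.c.i. of codimension $2$, applying $\ms{H}om(-,\ms{O})$ to $0\to\ms{I}_W\to\ms{O}\to\ms{O}_W\to 0$ and using the local Koszul resolution of $\ms{O}_W$ gives $\ms{H}om(\ms{I}_W,\ms{O})\cong\ms{O}$, $\ms{E}xt^1(\ms{I}_W,\ms{O})\cong\ms{E}xt^2(\ms{O}_W,\ms{O})$, and $\ms{E}xt^{\geq 2}(\ms{I}_W,\ms{O})=0$, while dualizing the Koszul complex shows $\ms{E}xt^2(\ms{O}_W,\ms{O})$ is an invertible $\ms{O}_W$-module. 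Tensoring by the line bundle $\ms{A}'$ (which commutes with $\ms{E}xt$) then yields $\ms{E}xt^0(\ms{A},\ms{O})\cong(\ms{A}')^{-1}$, $\ms{E}xt^1(\ms{A},\ms{O})\cong\ms{E}xt^2(\ms{O}_W,\ms{O})\otimes(\ms{A}')^{-1}$, and $\ms{E}xt^{\geq 2}(\ms{A},\ms{O})=0$. The middle sheaf is an invertible module on the zero-dimensional scheme $W$, hence abstractly isomorphic to $\ms{O}_W$; this gives (a).

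With these identifications the only nonzero entries of the spectral sequence lie in the columns $p=0,1$, and moreover $E_2^{2,0}=H^2(C\times\Delta,(\ms{A}')^{-1})=0$, since $C\times\Delta\to\Delta$ is proper with one-dimensional fibers over the (coherently acyclic) base $\Delta$, so $H^i(C\times\Delta,\ms{F})=0$ for every coherent $\ms{F}$ and $i\geq 2$. The five-term exact sequence of the spectral sequence then reads
$$0\to H^1\big((\ms{A}')^{-1}\big)\to\op{Ext}^1(\ms{A},\ms{O}_{C\times\Delta})\xrightarrow{\ \delta\ }H^0\big(\ms{E}xt^1(\ms{A},\ms{O}_{C\times\Delta})\big)\to H^2\big((\ms{A}')^{-1}\big)=0,$$
which is precisely the asserted sequence; the map $\delta$ is the edge homomorphism, i.e.\ restriction of a global extension class to its germs along $W$. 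Naturality of $\delta$ and of the inclusion $H^1((\ms{A}')^{-1})\hookrightarrow\op{Ext}^1$ is built into the spectral sequence.

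For (b) I would dualize the defining sequence $0\to\ms{O}_{C\times\Delta}\to\ms{E}\to\ms{A}\to 0$ of $\xi$. Applying $\ms{H}om(-,\ms{O}_{C\times\Delta})$ and using $\ms{E}xt^{\geq 1}(\ms{O},\ms{O})=0$ together with $\ms{E}xt^{\geq 2}(\ms{A},\ms{O})=0$ produces
$$0\to\ms{A}^{\vee}\to\ms{E}^{\vee}\to\ms{O}_{C\times\Delta}\xrightarrow{\ \partial\ }\ms{E}xt^1(\ms{A},\ms{O}_{C\times\Delta})\to\ms{E}xt^1(\ms{E},\ms{O}_{C\times\Delta})\to 0,$$
in which the connecting map $\partial$ is cup product with the extension class, so $\partial(1)=\delta(\xi)$ as a section of $\ms{E}xt^1(\ms{A},\ms{O})\cong\ms{O}_W$. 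Hence $\ms{E}xt^1(\ms{E},\ms{O})$ is the cokernel of multiplication by $\delta(\xi)$ on the invertible $\ms{O}_W$-module $\ms{E}xt^1(\ms{A},\ms{O})$, so it vanishes if and only if $\delta(\xi)$ generates that module. Finally $\ms{E}$, being an extension of torsion-free sheaves, is torsion-free, hence of projective dimension $\leq 1$ on the smooth surface $C\times\Delta$ (Auslander--Buchsbaum), so $\ms{E}$ is locally free if and only if $\ms{E}xt^1(\ms{E},\ms{O}_{C\times\Delta})=0$; combining the two statements gives (b).

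The step I expect to need the most care is the last paragraph: checking that the connecting map $\partial$ of the dualized sequence is genuinely cup product with $\xi$, so that its image is the submodule generated by $\delta(\xi)$ from the spectral sequence, and invoking the correct form of ``torsion-free on a smooth surface $\Rightarrow$ $\ms{E}xt^1(-,\ms{O})=0$ detects local freeness''. The Koszul computation of the local $\ms{E}xt$-sheaves and the vanishing $H^2(C\times\Delta,-)=0$ are routine.
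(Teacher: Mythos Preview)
Your argument is correct and follows the same route the paper takes: the paper derives the exact sequence from the local-to-global spectral sequence and the vanishing $E_2^{i,0}=H^i((\ms{A}')^{-1})=0$ for $i\geq 2$ (via the Leray spectral sequence for $\pi:C\times\Delta\to\Delta$), and otherwise simply cites Friedman's book (Chapter~2, Lemma~7 and Theorem~8) for the identification of the local $\ms{E}xt$-sheaves and for the local-freeness criterion. What you have written is essentially a self-contained reconstruction of those cited results, and your concerns in the last paragraph are well placed but not problematic: the equality $\partial(1)=\delta(\xi)$ and the Auslander--Buchsbaum step are exactly the ingredients Friedman uses.
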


\begin{proof} See \cite{F}, Chapter 2, p. 36--37. The exact sequence follows from the local-to-global spectral sequence
$$E^{i,j}_2 = H^i(\ms{E}xt^j(\ms{A},\ms{O}_{C\times\Delta})) \Longrightarrow \op{Ext}^{i+j}(\ms{A},\ms{O}_{C\times\Delta}).$$
Note that $E^{i,0}_2 = H^i(\ms{A}^{-1}) = H^i((\ms{A}')^{-1}) = H^0(\Delta,R^i \pi_* (\ms{A}')^{-1})$ where $\pi: C\times\Delta \rightarrow \Delta$ is the projection. In particular, $E^{i,0}_2 = 0$ for $i \geq 2$.

(a) {\it Ibid.}, Lemma 7 (note that the isomorphisms are not canonical).

(b) {\it Ibid.}, Theorem 8.
\end{proof}

Consider a relative ruled surface $\g{S}(Z,\ms{A},\xi) \rightarrow \Delta$. For any $t \in \Delta$, $\g{S}_t = {\bf P}(\ms{E}_t)$ is a ruled surface over $C \times \{t\}$ where $\ms{E}_t$ arises as an extension
$$0 \rightarrow \ms{O}_{C\times\{t\}} \rightarrow \ms{E}_t \rightarrow \ms{A}_t \rightarrow 0$$
We have $Z_t \sim \ms{O}_{\g{S}_t}(1)$. For a general $t$, the subscheme $Z_t$ is a section of $\g{S}_t$. On the special fiber, we have $Z_0 = C_0 \cup F_0$, where $C_0$ is a section of $\g{S}_0$ and $F_0 = F \times \{0\}$ is the vertical component of $Z_0$. If $F_0 \neq \emptyset$, we will say that $Z_0$ is a {\it degenerate section} of $\g{S}_0$.

Next, we will show that any degenerate section can be smoothed, in the following sense.

\begin{proposition}\label{prop_smooth} Let $\ms{A} = \ms{A}' \otimes \ms{I}_W$ be as above, with $W$ is supported on $C \times \{0\}$. Let
$$0 \rightarrow \ms{O}_{C\times\{0\}} \rightarrow \ms{E}_0 \rightarrow \ms{A}_0 \rightarrow 0$$
be any extension, with $\ms{E}_0$ locally free. Then, the exact sequence can be extended to 
$$0 \rightarrow \ms{O}_{C\times \Delta} \rightarrow \ms{E} \rightarrow \ms{A} \rightarrow 0$$
with $\ms{E}$ locally free on $C \times \Delta$.
\end{proposition}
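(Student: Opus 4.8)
The plan is to build $\ms{E}$ as the extension attached to a class on $C\times\Delta$ that lifts the class $\xi_0\in\op{Ext}^1_{C\times\{0\}}(\ms{A}_0,\ms{O}_{C\times\{0\}})$ of the given sequence, and then to show that any such lift is forced to be locally free after shrinking $\Delta$. The two ingredients that make this work are a pair of vanishing statements. Since $\ms{A}'$ is invertible, $\ms{H}om(\ms{A},\ms{O}_{C\times\Delta})\cong(\ms{A}')^{-1}$; and since $W$ is finite, $\ms{A}=\ms{A}'\otimes\ms{I}_W$ has projective dimension $\le 1$ at each point of the regular surface $C\times\Delta$ (Auslander--Buchsbaum), so $\ms{E}xt^{\ge 2}(\ms{A},\ms{O}_{C\times\Delta})=0$ while $\ms{E}xt^1(\ms{A},\ms{O}_{C\times\Delta})$ is supported on the finite set $\op{Supp}W$. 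Feeding this into the local-to-global spectral sequence of Proposition~\ref{prop_serre}, the only surviving contribution to $\op{Ext}^2$ is $H^2(C\times\Delta,(\ms{A}')^{-1})$, which vanishes because $C\times\Delta\to\Delta$ is proper over a Stein base with one-dimensional fibres --- this is exactly the vanishing $E^{i,0}_2=0$, $i\ge 2$, already invoked in the proof of that proposition. Hence $\op{Ext}^2_{C\times\Delta}(\ms{A},\ms{O}_{C\times\Delta})=0$.

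Next I would lift the class. As $\ms{A}$ is torsion-free, the coordinate $t$ on $\Delta$ acts injectively on it, so $\op{Tor}^{\ms{O}_{C\times\Delta}}_1(\ms{A},\ms{O}_{C\times\{0\}})=0$; consequently $\ms{A}_0$ is the undervied restriction of $\ms{A}$, and adjunction identifies $\op{Ext}^i_{C\times\Delta}(\ms{A},\ms{O}_{C\times\{0\}})$ with $\op{Ext}^i_{C\times\{0\}}(\ms{A}_0,\ms{O}_{C\times\{0\}})$ compatibly with restriction of extension classes. Applying $\op{Hom}(\ms{A},-)$ to the short exact sequence $0\to\ms{O}_{C\times\Delta}\xrightarrow{t}\ms{O}_{C\times\Delta}\to\ms{O}_{C\times\{0\}}\to0$ then gives
$$\op{Ext}^1_{C\times\Delta}(\ms{A},\ms{O}_{C\times\Delta})\longrightarrow\op{Ext}^1_{C\times\{0\}}(\ms{A}_0,\ms{O}_{C\times\{0\}})\longrightarrow\op{Ext}^2_{C\times\Delta}(\ms{A},\ms{O}_{C\times\Delta})=0,$$
so restriction is surjective on $\op{Ext}^1$. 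I would pick $\xi$ restricting to $\xi_0$ and let $\ms{E}$ be the corresponding extension $0\to\ms{O}_{C\times\Delta}\to\ms{E}\to\ms{A}\to0$.

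Finally I would check that this $\ms{E}$ does the job. Being an extension of the torsion-free sheaf $\ms{A}$ by $\ms{O}_{C\times\Delta}$ on the integral scheme $C\times\Delta$, $\ms{E}$ is itself torsion-free; hence $t$ --- and likewise any uniformiser of any local ring of $\Delta$ --- acts injectively on $\ms{E}$, which makes $\ms{E}$ flat over $\Delta$ and gives $\op{Tor}^{\ms{O}_{C\times\Delta}}_1(\ms{E},\ms{O}_{C\times\{0\}})=0$. The vanishing $\op{Tor}_1(\ms{A},\ms{O}_{C\times\{0\}})=0$ makes $\ms{E}\otimes\ms{O}_{C\times\{0\}}$ fit in a short exact sequence $0\to\ms{O}_{C\times\{0\}}\to\ms{E}\otimes\ms{O}_{C\times\{0\}}\to\ms{A}_0\to0$ whose class is the restriction of $\xi$, i.e.\ $\xi_0$; since an extension is determined by its class, $\ms{E}\otimes\ms{O}_{C\times\{0\}}\cong\ms{E}_0$, which is locally free by hypothesis. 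A sheaf flat over $\Delta$ whose fibre over $0$ is locally free is locally free in a neighbourhood of that fibre, and since $C\times\{0\}\cong C$ is proper this neighbourhood contains $C\times\Delta'$ for a smaller disk $\Delta'$; replacing $\Delta$ by $\Delta'$ completes the argument.

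I expect the genuinely delicate point to be not any single computation but the bookkeeping around the two $\op{Tor}_1$-against-$\ms{O}_{C\times\{0\}}$ vanishings: these are what ensure both that restriction honestly carries $\op{Ext}^1$ on $C\times\Delta$ into $\op{Ext}^1$ on the special fibre with no correction term, and that $\ms{E}\otimes\ms{O}_{C\times\{0\}}$ is literally the extension with class $\xi_0$ rather than merely one sharing its class. Granting those compatibilities, the $\op{Ext}^2$-vanishing and the flat specialisation of local freeness are routine.
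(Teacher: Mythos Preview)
Your argument is correct and takes a genuinely different route from the paper's. Both proofs first establish surjectivity of the restriction $\op{Ext}^1_{C\times\Delta}(\ms{A},\ms{O}_{C\times\Delta})\to\op{Ext}^1_{C\times\{0\}}(\ms{A}_0,\ms{O}_{C\times\{0\}})$ and then verify local freeness of the lift, but the mechanisms differ. The paper writes down the commutative diagram whose rows are the local-to-global short exact sequences of Proposition~\ref{prop_serre} on $C\times\Delta$ and on $C\times\{0\}$; it checks that the two outer vertical maps are surjective (using the splitting $\ms{A}_0\cong\ms{A}'_0(-W_0)\oplus\ms{O}_{W_0}$ to split the bottom row) and concludes by the Short Five Lemma. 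Local freeness then drops out of the Serre-type criterion Proposition~\ref{prop_serre}(b): the right-hand square of the diagram shows that $\delta(\xi)$ restricts to $\delta(\xi_0)$, which generates $\ms{E}xt^1(\ms{A}_0,\ms{O}_{C\times\{0\}})$ since $\ms{E}_0$ is locally free, and Nakayama finishes. You instead kill the obstruction group directly via $\op{Ext}^2=0$ and read surjectivity off the long exact sequence for multiplication by $t$, and you trade the Serre criterion for the fibrewise flatness criterion. Your approach is more portable --- it needs neither the explicit decomposition of $\ms{A}_0$ nor Proposition~\ref{prop_serre}(b) --- at the cost of the $\op{Tor}_1$ bookkeeping you rightly flag. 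One minor point: the final shrinking of $\Delta$ is unnecessary, since away from $C\times\{0\}$ the sheaf $\ms{A}$ agrees with the invertible sheaf $\ms{A}'$, so $\ms{E}$ is already locally free there as an extension of line bundles.
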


\begin{proof} Consider the commutative diagram with exact rows:
$$
\begin{array}{ccccccc}
0 \rightarrow & H^1(C\times\Delta, (\ms{A}')^{-1}) & \rightarrow & \op{Ext}^1(\ms{A},\ms{O}_{C\times\Delta}) & \rightarrow & H^0(\ms{E}xt^1(\ms{A},\ms{O}_{C\times \Delta})) & \rightarrow 0 \\
              & \downarrow         &             & \downarrow                            &             & \downarrow                              &               \\
0 \rightarrow & H^1(C \times \{0\},(\ms{A}_0)^{-1}) & \rightarrow & \op{Ext}^1(\ms{A}_0,\ms{O}_{C\times \{0\}}) & \rightarrow & H^0(\ms{E}xt^1(\ms{A}_0,\ms{O}_{C\times \{0\}})) & \rightarrow 0
\end{array}
$$
\\
Note that $\ms{A}_0 \cong \ms{A}'_0(-W_0) \oplus \ms{O}_{W_0}$ where $W_0 = W \times \{0\}$. Therefore, $(\ms{A}_0)^{-1} \cong (\ms{A}'_0(-W_0))^{-1}$. It follows that the bottom row of the diagram splits. Now, the map on the left factors through 

$$\underbrace{H^1(C\times\Delta, (\ms{A}')^{-1})}_{H^0(\Delta,R^1\pi_*(\ms{A}')^{-1})} \rightarrow \underbrace{H^1(C\times\{0\}, (\ms{A}'_0)^{-1})}_{(R^1\pi_*(\ms{A}')^{-1})_0} \rightarrow H^1(C \times \{0\}, (\ms{A}_0)^{-1})$$ 
\\
which is surjective. The map on the right is just the restriction
$$H^0(\ms{O}_W) \rightarrow H^0(\ms{O}_{W_0})$$
which is also surjective. By the Short Five Lemma, the map in the middle is surjective as well. Hence, any given extension $\xi_0 \in \op{Ext}^1(\ms{A}_0,\ms{O}_{C \times \{0\}})$ can be lifted to $\xi \in \op{Ext}^1(\ms{A},\ms{O}_{C\times\Delta})$. The resulting $\ms{E}$ is locally free by Prop. \ref{prop_serre}(b).
\end{proof}

\bigskip

\section{Main Result -- Overview}\label{sec_main}

The following theorem was announced in the introduction. The proof will occupy the rest of the paper. We will consider a certain refinement in Section \ref{sec_refinement}.

\begin{main_theorem} Let $n$ be a non-square positive integer. Write $n=k^2+\alpha$ with $k=\lfloor \sqrt{n} \rfloor$. Assume that either:
\begin{enumerate}[i)]
\item $n=8,10,12$, or
\item $k\geq 3$, $\alpha$ is even, $\alpha \mid 2n$.
\end{enumerate}
If the linear system $\ms{L}(d,n,m)$ is nonempty, then $d/m \geq c^{(2)}_n$.
\end{main_theorem}

\smallskip

The proof of the theorem consists of the four steps outlined below.

\subsection{Setup} We assume $\ms{L}(d,n,m)$ is nonempty. Fix a smooth curve $C$ of degree $k$ in ${\bf P}^2$. By Cor. \ref{cor_thm2}, the linear system $|\ms{L}_{S}(\mu,\mathfrak{b},\wh m)|$ is nonempty, with $\mu = \lfloor d/k \rfloor$, for any marked ruled surface $S(C,A,\xi;\{P_i\})$ as in Theorem \ref{thm2}.

\subsection{Degeneration} We will construct a relative marked ruled surface $\g{S}(Z,\ms{A},\xi;\{\ms{P}_i\})$ over $\Delta$ such that the general fiber $\g{S}_t$ satisfies the assumptions of Theorem \ref{thm2}, i.e.:

\begin{itemize}

\item $\ms{A}$ is a line bundle on $C \times \Delta$ of relative degree $\alpha = \deg \ms{A}_t$.

\item $\ms{P}_1,\dots,\ms{P}_n$ lie on $Z$. We denote by $\overline{\ms{P}}_i$ the projection of $\ms{P}_i$ under $Z \rightarrow C \times \Delta$.

\item For $t \in \Delta$ general, we have $\sum \ms{P}_{i,t} \sim \ms{A}_t + kH$ on $Z_t \cong C$.
\end{itemize}

We now describe the special fiber $\g{S}_0$. We assume that $\g{S}_0$ is semistable. Next, we assume that there is a smooth (possibly disconnected) curve $\Gamma$ on $\g{S}_0$ with the following two properties:

\begin{itemize}
\item $\Gamma$ meets $Z_0$ transversely at $n$ distinct points points $P_i = \ms{P}_{i,0}$;
\item $\Gamma^2 = 0$.
\end{itemize}

This determines uniquely the numerical class of $\Gamma$:
$$\Gamma \equiv \frac {2n}\alpha(Z_0 - \frac \alpha 2 f)$$
where $Z_0 \sim \ms{O}_{\g{S}_0}(1)$. In particular, a necessary condition for the existence of $\Gamma$ is that $\alpha \mid 2n$.

Let $\Gamma = \sum_{i=1}^s \Gamma_i$ where each $\Gamma_i$ is a smooth irreducible curve, with $\Gamma_i \cdot \Gamma_j = 0$ for $i \neq j$. Since $\Gamma^2 = 0$ and $\g{S}$ is semistable, $\Gamma$ lies on the boundary of the effective cone of $\g{S}_0$ (this follows from Lemma \ref{lemma_ss_fiber}). Therefore, $\Gamma_i \equiv \lambda_i \Gamma$ for some $\lambda_i \in \mathbb{Q}$ with $\sum \lambda_i = 1$ (in fact, in applications we will always have $\lambda_1 = \dots = \lambda_s = 1/s$).

\subsection{Semistability} Denote by $\pi : \wt{\g{S}} \rightarrow \g{S}$ the blowup of $\ms{P}_1,\dots,\ms{P}_n$ and let $E_1,\dots,E_n$ be the corresponding exceptional divisors. Denote by $\wt{\Gamma}$ the strict transform of $\Gamma$. We make the following hypothesis:
\begin{itemize}
\item for each $i$, the conormal bundle $\ms{N}_{\wt{\Gamma}_i/\wt{\g{S}}}$ is semistable of slope $\frac 1 2 \lambda_i n$.
\end{itemize}

\subsection{Invariants} Assuming the construction above can be realized, we complete the proof of the theorem. Consider the line bundle
$$\ms{L}_{\g{S}}(\mu,\ms{B},\wh m) = \ms{O}_{\wt{\mathfrak S}}(\mu - \ms{B} f - \sum \wh m E_i)$$
on the blowup $\pi : \wt{\g{S}} \rightarrow \g{S}$, where
\begin{itemize}
\item $\mu = \lfloor d /k \rfloor$;
\item $\ms{B} = \ms{O}_{C\times \Delta}(\sum m \overline{\ms{P}}_i - dH)$ of relative degree $b = \deg{\ms{B}}_t = nm - kd$.
\item $\wh m = \mu - m$.
\end{itemize}

By construction, there is a flat family of curves $\wt{\ms{C}} \rightarrow \Delta$ in $\wt{\g{S}}$, where $\wt{\ms{C}}$ is a section of $|\ms{L}_{\g{S}}(\mu,\ms{B},\wh m)|$. Denote by $\ms{C}$ the projection of $\wt{\ms{C}}$ in $\g{S}$. The following is a key computation:

\begin{lemma} Define $\gamma = -\frac 1 n \wt \Gamma \cdot \wt {\ms{C}}.$ Then
$$\gamma = \frac{n+k^2}\alpha m - \frac {2k}\alpha d.$$
\end{lemma}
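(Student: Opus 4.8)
The statement is a pure intersection-number computation on the ruled surface $\g{S}_0$ (or equivalently on the blowup $\wt{\g{S}}$). The plan is to compute $\wt\Gamma \cdot \wt{\ms{C}}$ by pushing everything down to $\g{S}$, using that the blowup $\pi$ identifies the strict transforms with $\pi^*(\text{class}) - (\text{multiplicity})\sum E_i$. Since $\wt{\ms{C}}$ is a section of $\ms{L}_{\g{S}}(\mu,\ms{B},\wh m) = \ms{O}_{\wt{\g{S}}}(\mu - \ms{B}f - \sum \wh m E_i)$, its class on $\wt{\g{S}_0}$ is $\mu Z_0 - b f - \sum \wh m E_i$ (using $\deg \ms{B}_0 = b = nm - kd$ and $Z_0 \sim \ms{O}_{\g{S}_0}(1)$). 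The curve $\Gamma$ has class $\frac{2n}{\alpha}\bigl(Z_0 - \frac\alpha2 f\bigr)$, and since $\Gamma$ meets $Z_0$ transversely at the $n$ points $P_i = \ms{P}_{i,0}$ (one branch through each $E_i$), its strict transform has class $\wt\Gamma \equiv \frac{2n}{\alpha} Z_0 - n f - \sum E_i$ on $\wt{\g{S}_0}$.

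\textbf{Main computation.} With these two classes in hand, I would compute the intersection product on $\wt{\g{S}_0}$ using $Z_0^2 = \alpha$, $Z_0 \cdot f = 1$, $f^2 = 0$, $E_i^2 = -1$, $E_i \cdot E_j = 0$ for $i\ne j$, and $E_i \cdot (\pi^* \text{anything}) = 0$. Thus
$$\wt\Gamma \cdot \wt{\ms{C}} = \Bigl(\tfrac{2n}{\alpha}Z_0 - nf - \textstyle\sum E_i\Bigr)\cdot\Bigl(\mu Z_0 - bf - \textstyle\sum \wh m E_i\Bigr).$$
Expanding: the $Z_0$–$Z_0$ term gives $\frac{2n}{\alpha}\mu\,\alpha = 2n\mu$; the cross terms $Z_0 \cdot f$ give $-\frac{2n}{\alpha}b - n\mu$; the $f$–$f$ term vanishes; the exceptional part contributes $-n\wh m$. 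Dividing by $-n$:
$$\gamma = -\tfrac1n \wt\Gamma \cdot \wt{\ms{C}} = -2\mu + \tfrac{2b}{\alpha} + \mu + \wh m = \tfrac{2b}{\alpha} + \wh m - \mu.$$
Now substitute $b = nm - kd$ and $\wh m - \mu = (\mu - m) - \mu = -m$ to obtain $\gamma = \frac{2(nm-kd)}{\alpha} - m = \frac{2nm - 2kd - \alpha m}{\alpha}$. Finally, using $n = k^2 + \alpha$, i.e. $\alpha m = (n - k^2)m$, the numerator becomes $2nm - 2kd - (n-k^2)m = (n + k^2)m - 2kd$, which yields exactly $\gamma = \frac{n+k^2}{\alpha}m - \frac{2k}{\alpha}d$.

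\textbf{Caveats and obstacles.} The computation is routine; the only real care needed is in justifying that the relevant intersection numbers may be computed on the special fiber. This follows because $\wt{\ms{C}} \to \Delta$ and (the strict transform of) $\Gamma$ are flat families, so $\wt\Gamma \cdot \wt{\ms{C}}$ is the $t$-independent intersection number of the classes on a single fiber; on the general fiber this is a product of divisor classes on the surface $\wt{\g{S}_t}$, and the answer agrees with the computation on $\wt{\g{S}_0}$ by constancy. One subtlety worth a remark: on the special fiber $\Gamma$ may be reducible, $\Gamma = \sum \Gamma_i$ with $\Gamma_i \equiv \lambda_i \Gamma$, and the $n$ intersection points with $Z_0$ are distributed among the components; but since we only use the total class $\wt\Gamma \equiv \sum \wt\Gamma_i$ and $\sum \lambda_i = 1$, this causes no difficulty — the aggregate class is as stated. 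I expect no genuine obstacle here; this lemma is the bookkeeping step that feeds the subsequent application of Corollary \ref{basic_cor} to $\ms{N}_{\wt\Gamma_i/\wt{\g{S}}}$.
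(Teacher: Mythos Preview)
Your proof is correct and essentially identical to the paper's: both compute $\wt\Gamma\cdot\wt{\ms C}=\Gamma\cdot\ms C-n\wh m$ on the special fiber, reduce to $\gamma=-m+\tfrac{2b}{\alpha}$, and then substitute $b=nm-kd$ and $n=k^2+\alpha$. The only cosmetic difference is that you expand the intersection product directly in the Picard group of $\wt{\g S}_0$, whereas the paper first records $\Gamma\cdot Z_0=n$ and $\Gamma\cdot f=\tfrac{2n}{\alpha}$ and works downstairs on $\g S_0$.
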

\begin{proof}
Using the fact that $\Gamma \cdot Z_0 = n$, $\Gamma \cdot f = \frac{2n}\alpha$ and $\ms{C}_0 \sim \mu Z_0 - \g{b} f$, we find:
\begin{align*}
\gamma &= - \frac 1 n \wt \Gamma \cdot \wt{\ms{C}} \\
&= -\frac 1 n \left(\Gamma \cdot \ms{C} - n\wh m \right) \\
&= -(\mu - \frac 2 \alpha b) + \wh m \\
&= -m + \frac 2 \alpha b.
\end{align*}
Finally, substitute $b=nm-kd$ and $n=k^2+\alpha$.
\end{proof}

To complete the proof of the theorem, we will estimate $\mu_i = \op{mult}_{\Gamma_i/\g{S}}(\ms{C})$ in two ways. First, there is an obvious upper bound which comes from the numerical class of $\Gamma$:
$$\sum \lambda_i \mu_i \leq \frac {\mu} {2n/\alpha}.$$
Since $\mu \leq d/k$, this becomes:
$$\sum \lambda_i \mu_i \leq \frac{d}{2kn/\alpha}. \eqno(\sharp)$$
By Basic Lemma and the semistability hypothesis, we have:
$$\underbrace{\op{slope}(\ms{N}_{\wt \Gamma_i/\wt{\mathfrak{S}}})}_{\frac 1 2 \lambda_i n} \cdot \mu_i \geq \underbrace{-\wt \Gamma_i \cdot \wt{\ms{C}}}_{\lambda_i n\gamma}$$
i.e.
$$\frac 1 2 \mu_i \geq \gamma. \eqno(\flat)$$
From $(\sharp)$ and $(\flat)$, and using $\sum \lambda_i = 1$, we get:
$$\frac 1 2 \frac{d}{2kn/\alpha} \geq \gamma = \frac{n+k^2}{\alpha} m - \frac{2k}\alpha d.$$
It turns out that this is equivalent to the inequality in the theorem. We will check this explicitly for specific values of $n$. For the general case, see Lemma \ref{lemma_B}(b) in the Appendix.

\bigskip

\section{Eight Points}\label{sec_eight}

We verify Main Theorem in the case $n=8$. The value $c^{(2)}_8 = \frac{48}{17}$ is well-known to be sharp (see example below). We include this case for illustration purposes.

\subsection{Setup} Since $k=2$, we take $C$ to be a smooth conic in ${\bf P}^2$. The line bundle $A \cong \ms{O}_{{\bf P}^1}(4)$ on $C$ is of degree $\alpha=4$. Consider an extension
$$0 \rightarrow \ms{O}_C \rightarrow \ms{E} \rightarrow A \rightarrow 0$$
corresponding to a general $\xi \in \op{Ext}^1(A,\ms{O}_C)$. It follows that
$$\ms{E} \cong \ms{O}_{{\bf P}^1} (2) \oplus \ms{O}_{{\bf P}^1}(2).$$
Hence, $S = {\bf P}(\ms{E}) \cong C \times {\bf P}^1 \cong {\bf P}^1 \times {\bf P}^1$. We identify $C$ with the section of $S$ corresponding to the short exact sequence. It follows that $C \sim C_0+2f$ where $C_0$ is a horizontal section of $S$.

\subsection{Degeneration} Let $\g{S} = S \times \Delta$ and $Z = C \times \Delta \subset \g{S}$. We identify $S$ with the special fiber of $\g{S} \rightarrow \Delta$. We take $\Gamma = \Gamma_1 + \dots + \Gamma_4$ on $S$, where each $\Gamma_i \sim C_0$ is a general horizontal section of $S$. Let $\Gamma_i \cap C = \{P_{2i-1},P_{2i}\}$. Next we specialize the eight relative points $\ms{P}_{i} \subset Z$ to $P_i \in C$ in a general way.

\subsection{Semistability} Let $\wt{\g{S}}$ be the blowup of $\g{S}$ along the relative points $\ms{P}_i$. We have to show that, for each $i$, the conormal bundle $\ms{N}_{\wt \Gamma_i/\wt{\g{S}}}$ is semistable (of slope 1). Denote by $P'_i$ the image of $\ms{P}_i$ in the corresponding ${\bf P}(\ms{N}_{\Gamma/\g{S}}) \cong \Gamma \times {\bf P}^1$. Then, ${\bf P}(\ms{N}_{\wt \Gamma_i/\wt{\g{S}}})$ is obtained from ${\bf P}(\ms{N}_{\Gamma_i/\g{S}})$ by performing elementary transforms at the points $P_{2i-1}',P_{2i}'$. If the specialization is general enough, the points $P_{2i-1}',P_{2i}'$ do not belong to the same horizontal section of ${\bf P}(\ms{N}_{\Gamma_i/\g{S}})$. It follows that $\ms{N}_{\wt \Gamma_i/\wt{\g{S}}} \cong \ms{O}_{\Gamma_i}(P_{2i-1}) \oplus \ms{O}_{\Gamma_i}(P_{2i}) \cong \ms{O}_{{\bf P}^1}(1) \oplus \ms{O}_{{\bf P}^1}(1)$, which is semistable of slope 1.

\subsection{Invariants}
Denote $\mu_i = \op{mult}_{\Gamma_i/\g{S}}(\ms{C})$. By symmetry, $\mu_1 = \dots = \mu_4$. Since $\Gamma_i \sim C_0$, we have the following upper bound:
$$\mu_1 \leq \frac \mu { 4 } \leq \frac {d} { 4 \cdot 2 }. \eqno(\sharp)$$
The lower bound from Basic Lemma is:
$$\frac 1 2 \mu_1 \geq \gamma = 3m - d. \eqno(\flat)$$
Combining $(\sharp)$ and $(\flat)$, we get:
$$\frac 1 2 \cdot \frac {d} { 8 } \geq 3 m - d \Longleftrightarrow  17 d \geq 48m,$$
q.e.d.

The bound is sharp:
\begin{example} The linear system $\ms{L}(48,8,17)$ has a unique section, namely the union of 8 curves of degree 6 each passing through 7 points with multiplicity 2 and 1 point with multiplicity 3 (\mbox{\cite{N}, remark on p. 772}; \cite{CM2}, Prop. 2.3).
\end{example}

\bigskip

\section{Ten Points}\label{sec_ten}

Here we prove Main Theorem for $n=10$ points.

\subsection{Setup} Since $k=3$, we take $C$ to be a smooth cubic in ${\bf P}^2$. Fix a point $W$ on $C$ such that
$$9W \sim 3H.$$
For example, we can take $W$ to be a Weierstrass point of $C$ (however, later in Section \ref{sec_refinement} we will require that $3W \nsim H$). Let $A = \ms{O}_C(W)$ which is of degree $\alpha=1$. Since $h^1(A^\vee) = 1$, there is a unique nontrivial extension
$$0\rightarrow \ms{O}_C \rightarrow \ms{E} \rightarrow A \rightarrow 0$$
The surface $S = {\bf P}(\ms{E})$ is an indecomposable elliptic ruled surface of degree 1 (see Appendix \ref{appendix_A} for background). The short exact sequence determines a minimal section of $S$ which we identify with the curve $C$.

\subsection{Degeneration} Let $\g{S} = S \times \Delta$ and $Z = C \times \Delta \subset \g{S}$. We identify $S$ with the special fiber of $\g{S} \rightarrow \Delta$. Next, we take $\Gamma = \Gamma_1 + \dots + \Gamma_5$, where each $\Gamma_i$ is a general section of the pencil $|-2K_S|$. By Prop. \ref{prop_elliptic}, each $\Gamma_i \equiv 4C - 2f$ is a smooth elliptic curve. Denote $\Gamma_i \cap C = \{P_{2i-1},P_{2i}\}$. Note that $P_{2i-1} + P_{2i} \sim 2A.$ Since $3A \sim H$, we have
$$\sum P_i \sim 10A \sim A + 3H.$$
Finally, we specialize the ten relative points $\ms{P}_1,\dots,\ms{P}_{10}$ in $Z$ to $P_1,\dots,P_{10}$ in a general way such that
$$\sum \ms{P}_{i,t} \sim A + 3H$$
for any $t \in \Delta$.

\subsection{Semistability} We claim that, for each $i$, $\ms{N}_{\wt \Gamma_i/\wt{\g{S}}}$ is semistable (of slope 1). Denote by $P'_i$ the image of $\ms{P}_i$ in ${\bf P}(\ms{N}_{\Gamma/\g{S}}) \cong {\bf P}(\ms{O}_\Gamma \oplus \ms{O}_\Gamma) \cong \Gamma \times {\bf P}^1$. Now, ${\bf P}(\ms{N}_{\wt \Gamma_i/\wt{\g{S}}})$ is obtained from ${\bf P}(\ms{N}_{\Gamma_i/\g{S}})$ by performing elementary transforms at the points $P_{2i-1}',P_{2i}'$. If the specialization is general enough, the points $P_{2i-1}',P_{2i}'$ do not belong to the same horizontal section of ${\bf P}(\ms{N}_{\Gamma_i/\g{S}})$. It follows that $\ms{N}_{\wt \Gamma_i/\wt{\g{S}}} \cong \ms{O}_{\Gamma_i}(P_{2i-1}) \oplus \ms{O}_{\Gamma_i}(P_{2i})$, which is semistable of slope 1.

\begin{figure}[ht]
\includegraphics[scale=.8]{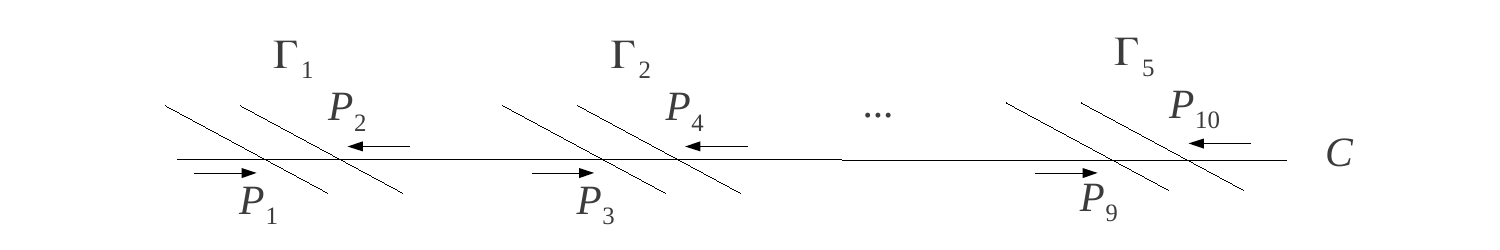}
\caption{Specialization to $\Gamma$ $(n=10)$}
\label{ten_fig}
\end{figure}

\subsection{Invariants} Denote $\mu_i = \op{mult}_{\Gamma_i/\g{S}}(\ms{C})$. By symmetry, $\mu_1 = \dots = \mu_5$. Since $\Gamma_i \equiv 4C - 2f$, we have the following upper bound:
$$\mu_1 \leq \frac \mu { 5 \cdot 4 } \leq \frac {d} { 5 \cdot 4 \cdot 3 }. \eqno(\sharp)$$
The lower bound from Basic Lemma is:
$$\frac 1 2 \mu_1 \geq \gamma = 19m - 6d. \eqno(\flat)$$
From $(\sharp)$ and $(\flat)$, we get:
$$\frac 1 2 \cdot \frac {d} { 60 } \geq 19 m - 6 d \Longleftrightarrow  721 d \geq 2280m,$$
q.e.d.

\bigskip

\section{Eleven Points}\label{sec_eleven}

We prove Main Theorem for $n=11$ points. This is the first time when we study an interpolation problem $\ms{L}_S(\mu,\g{b},\wh m)$ by deforming the underlying surface $S$ itself.

\subsection{Setup} As before, $C$ is a smooth cubic in ${\bf P}^2$. Fix a point $W$ on $C$ such that $9W \sim 3H.$
Let $A$ be any line bundle of degree $\alpha=2$ on $C$. It is easy to see that for a general $\xi \in \op{Ext}^1(A,\ms{O}_C)$, the ruled surface $S(C,A,\xi)$ is decomposable of degree 0. Denote by $C_{(i)}$, $i=0,1$, the two minimal sections of $S$. It follows that $C \equiv C_{(i)} + f$.

\subsection{Degeneration} We will construct a relative marked ruled surface $\g{S}(Z,\ms{A},\xi;\{\ms{P}_{ij}\})$ such that:

\begin{itemize}
\item The special fiber $\g{S}_0$ is simply $C \times {\bf P}^1$.
\item The special section $Z_0 = C_0 \cup F$; here $C_0$ is a horizontal section of $\g{S}_0$ and $F$ is the fiber of $\g{S}_0$ above $W$.
\item The relative points $\{\ms{P}_i\}$ on $Z$ are such that $\sum \ms{P}_{i,t} \sim \ms{A}_t + kH$ on $Z_t \cong C$, for general $t$.
\item Each limit point $P_{i} = \ms{P}_{i,0}$ is a general point on $F$.
\end{itemize}

The construction is done as follows. First, we choose relative points $\overline{\ms{P}}_i$ in $C \times \Delta$ specializing to $W \times \{0\}$ in a general way. Let
\begin{align*} \ms{A}' &= \ms{O}_{C\times\Delta}(\sum \overline{\ms{P}}_i - 3H); \\
\ms{A} &= \ms{A}' \otimes \ms{I}_{W\times \{0\}}.
\end{align*}
Since $9W \sim 3H$, it follows that
\begin{align*}
\ms{A}'_0 &\cong \ms{O}_C(11W - 3H) \cong \ms{O}_{C}(2W); \\
\ms{A}_0 &\cong \ms{O}_C(W) \oplus \ms{O}_W(W).
\end{align*}
Consider the following short exact sequence on $C \times \{0\}$:
$$0 \rightarrow \ms{O}_C \rightarrow \ms{O}_C(W) \oplus \ms{O}_C(W) \rightarrow \ms{O}_C(W) \oplus \ms{O}_W(W) \rightarrow 0$$
By Prop. \ref{prop_smooth}, the sequence can be extended to 
$$0 \rightarrow \ms{O}_{C\times\Delta} \rightarrow \ms{E} \rightarrow \ms{A} \rightarrow 0$$
with $\ms{E}$ locally free. Let $\g{S} = {\bf P}(\ms{E})$ and $Z = {\bf P}(\ms{A})$. Hence $\g{S}_0 = {\bf P}(\ms{O}_C(W) \oplus \ms{O}_C(W)) = C \times {\bf P}^1$. Now, $Z \rightarrow C \times \Delta$ is just the blowup of $W \times \{0\}$ with exceptional divisor $F$. Finally, we take $\ms{P}_i$ to be the strict transform of $\overline{\ms{P}}_i$ in $Z$.

\begin{figure}[ht]
\includegraphics[scale=.8]{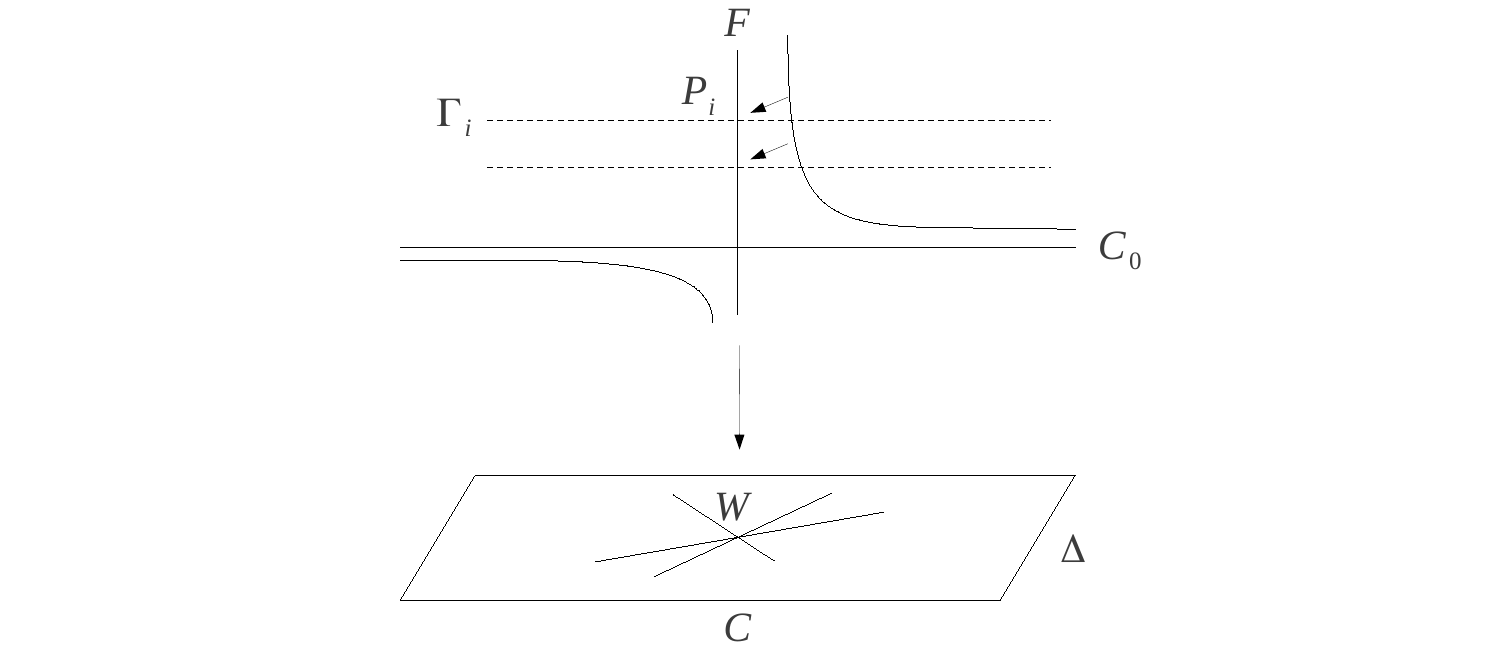}
\caption{The blowup $Z \rightarrow C \times \Delta$}
\label{eleven_fig}
\end{figure}

\subsection{Semistability} We take $\Gamma = \Gamma_1 + \dots + \Gamma_{11}$ where $\Gamma_i$ is the horizontal section of $\g{S}_0 = C\times{\bf P}^1$ through $P_i$ (see Fig. \ref{eleven_fig}). Consider the blowup $\wt{\g{S}} \rightarrow \g{S}$ at the relative points $\ms{P}_i$. We claim that for each $i=1,\dots,11$, $\ms{N}_{\wt \Gamma_i/\wt{\g{S}}}$ is indecomposable of degree 1 (hence semistable of slope 1/2). First, we will show that $\ms{N}_{\Gamma_i/\g{S}}$ is indecomposable of degree 0.

We will need some deformation theory. Let $D = {\mathbb{C}} [t]/t^2$ be the ring of dual numbers. Let $\g{S}' = \g{S} \times_\Delta D$ viewed as an infinitesimal deformation of $\g{S}_0 \cong C \times {\bf P}^1$ over $D$. We will say that a section $T$ of $\g{S}_0$ is {\it (infinitesimally) unobstructed} if and only if $T$ can be extended to a subscheme $T'$ of $\g{S}'$ flat over $D$.

\begin{lemma} Assume the above setting.
\begin{enumerate}
\item $T$ is unobstructed if and only if the following short exact sequence splits:
$$0 \rightarrow \ms{N}_{\g{S}_0/\g{S}}|_T \rightarrow \ms{N}_{T/\g{S}} \rightarrow \ms{N}_{T/\g{S}_0} \rightarrow 0$$ 
\item Suppose there are 3 disjoint horizontal sections $T_1,T_2,T_3$ of $\g{S}_0 \cong C \times {\bf P}^1$ that are unobstructed. Then, $\g{S}'$ is an infinitesimally trivial deformation, i.e. $\g{S}' \cong \g{S}_0 \times D$. In particular, any section of $\g{S}_0$ is unobstructed.
\item $C_0$ is obstructed.
\end{enumerate}
\end{lemma}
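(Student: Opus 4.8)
The plan is to compute everything via the normal bundle exact sequence and the geometry of the deformation $\g{S} \to \Delta$. For part (a), I would apply the standard deformation-theoretic criterion: a section $T \subset \g{S}_0$ extends to a subscheme $T' \subset \g{S}' = \g{S}\times_\Delta D$ flat over $D$ if and only if the obstruction in $H^1(T, \ms{N}_{T/\g{S}_0})$ vanishes, but more precisely the obstruction lives in the extension class of the displayed sequence
$$0 \rightarrow \ms{N}_{\g{S}_0/\g{S}}|_T \rightarrow \ms{N}_{T/\g{S}} \rightarrow \ms{N}_{T/\g{S}_0} \rightarrow 0.$$
Here $\ms{N}_{\g{S}_0/\g{S}}|_T \cong \ms{O}_T$ since $\g{S}_0$ is a fiber of a smooth morphism to $\Delta$, and $\ms{N}_{T/\g{S}_0}$ is a line bundle on $T \cong C$. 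The claim is precisely that unobstructedness is equivalent to the splitting of this sequence; I would cite the first-order deformation picture (the class of this sequence in $\op{Ext}^1(\ms{N}_{T/\g{S}_0}, \ms{O}_T)$ is exactly the obstruction to lifting $T$ along the first-order thickening $D$), so that it splits iff $T$ lifts.

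For part (b), I would argue as follows. If $T_1, T_2, T_3$ are three disjoint unobstructed horizontal sections, each lifts to a section $T_i'$ of $\g{S}'$. Since $\g{S}_0 \cong C \times {\bf P}^1$ and the $T_i$ are disjoint horizontal sections, they determine (together with the ${\bf P}^1$-bundle structure) a trivialization of $\g{S}_0$; concretely, three disjoint sections of a ${\bf P}^1$-bundle, together with the lifts $T_i'$, pin down an isomorphism $\g{S}' \cong {\bf P}({\ms O} \oplus {\ms O})\times_C D$ by sending $T_1', T_2', T_3'$ to the sections $0, \infty, 1$. More precisely: $\g{S}' = {\bf P}(\ms{E}')$ for some rank-$2$ bundle $\ms{E}'$ on $C\times D$ restricting to $\ms{O}_C^{\oplus 2}$ on $C\times\{0\}$; the lifts $T_1', T_2'$ give two sub-line-bundles splitting $\ms{E}'$ as a direct sum of two line bundles $\ms{L}_1 \oplus \ms{L}_2$ on $C\times D$ restricting to $\ms{O}_C$, and $T_3'$ forces the gluing to be trivial, hence $\ms{L}_i \cong \ms{O}_{C\times D}$. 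Therefore $\g{S}' \cong \g{S}_0 \times D$, so the Kodaira–Spencer class of the deformation vanishes and \emph{every} section of $\g{S}_0$ is unobstructed.

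For part (c), I would show directly that $C_0$ does \emph{not} lift. Recall $Z_0 = C_0 \cup F$ where $C_0 \sim \ms{O}_{\g{S}_0}(1)\otimes(\text{vertical correction})$ is a horizontal section and $F$ is the fiber over $W$; the whole point of the construction via $\ms{A} = \ms{A}' \otimes \ms{I}_{W\times\{0\}}$ and Proposition \ref{prop_smooth} is that $Z = {\bf P}(\ms{A})$ is the blowup of $C\times\Delta$ at $W\times\{0\}$, so $Z$ is an \emph{honest} family whose general fiber is a section but whose special fiber acquires the vertical component $F$. If $C_0$ lifted to a section of $\g{S}'$, then — since $Z\subset \g{S}$ is a divisor with $Z\cdot(\text{generic fiber})$ a section — one could compare the lift of $C_0$ with $Z$ restricted to $\g{S}'$ and derive that $F$ would have to deform away, contradicting that $Z$ is a blowup at a point lying in the central fiber only. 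Concretely, I would compute the extension class of
$$0 \rightarrow \ms{O}_{C_0} \rightarrow \ms{N}_{C_0/\g{S}} \rightarrow \ms{N}_{C_0/\g{S}_0} \rightarrow 0$$
and show it is nonzero in $\op{Ext}^1(\ms{N}_{C_0/\g{S}_0},\ms{O}_{C_0}) \cong H^1(C, \ms{N}_{C_0/\g{S}_0}^{-1})$, using that the Kodaira–Spencer class of $\g{S}\to\Delta$ at $0$ is supported near $F$ (it is forced to be nonzero exactly because $Z$ is a nontrivial blowup) and pairs nontrivially against $C_0$ since $C_0$ meets $F$.

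I expect part (c) to be the main obstacle: parts (a) and (b) are essentially formal once the deformation-theory dictionary is set up, but (c) requires genuinely identifying the Kodaira–Spencer class of the constructed family $\g{S}\to\Delta$ and checking it obstructs $C_0$. The cleanest route is probably to observe that $C_0 + F = Z_0$ \emph{is} the specialization of the section $Z_t$, so $Z_0$ as a whole "lifts" (to $Z$), whence $C_0$ lifts if and only if $F$ does; but $F$ is the exceptional divisor of the blowup $Z\to C\times\Delta$ at a point of the central fiber, so $F$ manifestly does not deform out of $\g{S}_0$, giving the obstruction. I would phrase (c) along exactly these lines to avoid an explicit Čech computation.
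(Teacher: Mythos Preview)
Your treatment of (a) and (b) is essentially the paper's: (a) is the standard deformation-theoretic dictionary, and for (b) the paper also extracts quotient line bundles $\ms{L}'_i$ from the lifted sections and uses Nakayama to identify $\ms{E}'\cong\ms{L}'_i\oplus\ms{L}'_j$, then observes all three $\ms{L}'_i$ are isomorphic to the common cokernel of $\ms{E}'\to\ms{L}'_1\oplus\ms{L}'_2\oplus\ms{L}'_3$, giving $\g{S}'\cong\g{S}_0\times D$.

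Part (c), however, has a real gap. Your ``cleanest route'' claims that $C_0$ lifts iff $F$ does, and that $F$ \emph{does not} deform out of $\g{S}_0$ because it is the exceptional curve of the blowup $Z\to C\times\Delta$. But the obstructedness in question is about lifting inside $\g{S}$, not inside $Z$. In $\g{S}={\bf P}(\ms{E})\to C\times\Delta$ the curve $F$ is simply the fiber over $(W,0)$, and it lifts trivially to $\g{S}'$ (to the fiber over $W\times D$). So your contradiction evaporates: $F$ \emph{is} unobstructed in $\g{S}$, and the implication ``$Z_0$ lifts, hence $C_0$ lifts iff $F$ lifts'' fails in any case, since liftability of a reducible divisor does not decompose componentwise.

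The paper's argument for (c) is different and avoids any Kodaira--Spencer computation. It computes $\ms{N}_{C_0/\g{S}}$ via the \emph{other} filtration $C_0\subset Z\subset\g{S}$:
\[
0 \rightarrow \ms{N}_{Z/\g{S}}|_{C_0} \rightarrow \ms{N}_{C_0/\g{S}} \rightarrow \ms{N}_{C_0/Z} \rightarrow 0,
\]
with $\ms{N}_{Z/\g{S}}|_{C_0}\cong\ms{O}_C(-W)$ and $\ms{N}_{C_0/Z}\cong\ms{O}_C(W)$. Because $h^0(C,\ms{O}_C(W))=1$ on the elliptic curve $C$, any map $\ms{O}_C(-W)\hookrightarrow\ms{O}_C\oplus\ms{O}_C$ factors through a single summand and has torsion cokernel; hence $\ms{N}_{C_0/\g{S}}\ncong\ms{O}_C\oplus\ms{O}_C$. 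But the sequence in (a) for $T=C_0$ sits between $\ms{O}_C$ and $\ms{O}_C$, so its splitting would force exactly $\ms{N}_{C_0/\g{S}}\cong\ms{O}_C\oplus\ms{O}_C$. Thus the sequence does not split and $C_0$ is obstructed. This is the missing idea: use the second, $Z$-based filtration to pin down the isomorphism type of the rank-two conormal bundle, rather than trying to deduce rigidity from the blowup picture.
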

\begin{proof} 
(a) This is clear.

(b) We have $\g{S}' = {\bf P}(\ms{E}')$ where $\ms{E}' = \ms{E} \otimes D$. For any $i=1,2,3$, the embedding $f_i : T'_i \rightarrow \g{S}'$ induces a surjective morphism $\ms{E}' \rightarrow \ms{L}'_i$ where $\ms{L}'_i = f_i^* \ms{O}_{\g{S}'}(1)$ is a line bundle on $C' = C \times D$. By Nakayama's lemma, for any $i \neq j$, the induced map $\ms{E}' \rightarrow \ms{L}'_i \oplus \ms{L}'_j$ is an isomorphism. Hence $\ms{L}'_1 \cong \ms{L}'_2 \cong \ms{L}'_3 \cong \op{coker}(\ms{E}' \rightarrow \ms{L}'_1 \oplus \ms{L}'_2 \oplus \ms{L}'_3)$, i.e. $\g{S}' \cong \g{S}_0 \times D$.

(c) Consider the short exact sequence
$$0 \rightarrow \ms{N}_{Z/\g{S}}|_{C_0} \rightarrow \ms{N}_{C_0/\g{S}} \rightarrow \ms{N}_{C_0/Z} \rightarrow 0$$
where $\ms{N}_{Z/\g{S}}|_{C_0} \cong \ms{O}_C(-W)$ and $\ms{N}_{C_0/Z} \cong \ms{O}_C(W)$. Since $h^0(C,\ms{O}_C(W)) = 1$, it is clear that $\ms{N}_{C_0/\g{S}} \ncong \ms{O}_C \oplus \ms{O}_C$. Now consider the short exact sequence from part (a):
$$0 \rightarrow \ms{N}_{\g{S}_0/\g{S}}|_{C_0} \rightarrow \ms{N}_{C_0/\g{S}} \rightarrow \ms{N}_{C_0/\g{S}_0} \rightarrow 0$$ 
where $\ms{N}_{\g{S}_0/\g{S}}|_{C_0}\cong \ms{O}_C$ and $\ms{N}_{C_0/\g{S}_0} \cong \ms{O}_C$. It follows that $\ms{N}_{C_0/\g{S}_0}$ is indecomposable of degree 0. Hence, $C_0$ is obstructed.
\end{proof}

Part (c) of the lemma implies that $\g{S}'$ is not an infinitesimally trivial deformation. By part (b) and by symmetry, $\Gamma_i$ is obstructed for any $i$. It follows that the conormal bundle $\ms{N}_{\Gamma_i/\g{S}}$ is indecomposable of degree 0.

Denote by $P'_i$ the image of $\ms{P}_i$ in ${\bf P}(\ms{N}_{\Gamma_i/\g{S}})$. Clearly, $P'_i$ does not belong to the unique minimal section of ${\bf P}(\ms{N}_{\Gamma_i/\g{S}})$ (because $\ms{P}_i$ meets $\g{S}_0$ transversely). Finally, ${\bf P}(\ms{N}_{\wt \Gamma_i / \wt {\g{S}}})$ is obtained from ${\bf P}(\ms{N}_{\Gamma_i/\g{S}})$ by performing an elementary transform at $P'_i$. It follows that $\ms{N}_{\wt \Gamma_i / \wt {\g{S}}}$ is indecomposable of degree 1.

\subsection{Invariants}
Denote $\mu_i = \op{mult}_{\Gamma_i/\g{S}}(\ms{C})$. By symmetry, $\mu_1 = \dots = \mu_{11}$. Since $\Gamma_i \equiv C_0$, we have the following upper bound:
$$\mu_1 \leq \frac \mu { 11 } \leq \frac {d} {11 \cdot 3}. \eqno(\sharp)$$
The lower bound from Basic Lemma is:
$$\frac 1 2 \mu_1 \geq \gamma = 10m - 3d.\eqno(\flat)$$
Combining $(\sharp)$ and $(\flat)$, we get:
$$\frac 1 2 \cdot \frac d { 33 } \geq 10m - 3d \Longleftrightarrow 199d \geq 660m.$$
This completes the proof for eleven points.

\begin{remark}
It might be also profitable to study the behavior of $\ms{C}$ along the fiber $F$. We have $\ms{N}_{\wt F/\wt{\g{S}}} \cong \ms{O}_{{\bf P}^1}(10) \oplus \ms{O}_{{\bf P}^1}(1)$, which follows from the split exact sequence
$$0 \rightarrow \ms{N}_{\wt{Z}/\wt{\g{S}}}|_{\wt F} \rightarrow \ms{N}_{\wt F/\wt{\g{S}}} \rightarrow \ms{N}_{\wt F/\wt Z} \rightarrow 0$$
with $\ms{N}_{\wt{Z}/\wt{\g{S}}}|_{\wt F} \cong \ms{O}_{{\bf P}^1}(10)$ and $\ms{N}_{\wt F/\wt Z} \cong \ms{N}_{F/Z} \cong \ms{O}_{{\bf P}^1}(1)$. The fact that $\ms{N}_{\wt F/\wt{\g{S}}}$ is unstable causes certain multiplicity and tangency conditions on the limit curve $\ms{C}_0$ at the point $W=C_0 \cap F$. A more careful analysis of the situation is beyond of the scope of this paper.
\end{remark}

\bigskip

\section{Twelve Points}\label{sec_twelve}

In this section we prove Main Theorem for $n=12$. This case is similar to $n=10$.

\subsection{Setup} As before, $C$ is a smooth cubic in ${\bf P}^2$. Fix a point $W$ on $C$ such that $9W \sim 3H$. Take $A = \ms{O}_C(3W)$ which is of degree $\alpha = 3$. It is easy to see that for a general $\xi \in \op{Ext}^1(A,\ms{O}_C)$, $S(C,A,\xi)$ is an indecomposable elliptic ruled surface of degree 1. It follows that $C \equiv C_0 + f$ where $C_0$ is a minimal section of $S$.

\subsection{Degeneration}
Let $\g{S} = S \times \Delta$ and $Z = C \times \Delta \subset \g{S}$. We identify $S$ with the special fiber of $\g{S}$. Take $\Gamma = \Gamma_1 + \Gamma_2$ where each $\Gamma_i$ is a general section of the pencil $|-2K_S|$. In particular, $\Gamma_i \cdot C = (4C_0 - 2f) \cdot (C_0 + f) = 6$. Let $C \cap \Gamma_1 = \{P_1,\dots,P_6\}$ and $C \cap \Gamma_2 = \{P_7,\dots,P_{12}\}$. It follows that 
$$\sum P_i \sim 4 A \sim A + 3H.$$
Next, we specialize $\ms{P}_i$ in $Z$ to $P_i$ in a general way such that 
$$\sum \ms{P}_{i,t} \sim A + 3H$$
for any $t \in \Delta$.

\subsection{Semistability} We have to show that if the specialization of the points $\ms{P}_i$ is general enough, the conormal bundle $\ms{N}_{\wt \Gamma_i / \wt {\g{S}}}$ is semistable (of slope 3). Since semistability is an open property (\mbox{\cite{M2}, Thm. 2.8}), it suffices to describe a particular specialization for which $\ms{N}_{\wt \Gamma_i / \wt {\g{S}}}$ is semistable. This is not hard. In fact, we claim that we can specialize the points in such a way that
$$\ms{N}_{\wt \Gamma_1 / \wt {\g{S}}} \cong \ms{O}_{\Gamma_1}(P_1+P_2+P_3) \oplus \ms{O}_{\Gamma_1}(P_4+P_5+P_6)$$ 
and similarly for $\Gamma_2$. This can be achieved by moving the triples of points $\{\ms{P}_{3i-2},\ms{P}_{3i-1},\ms{P}_{3i}\}$ ``in parallel'' while being assigned to the same section of the pencil $|-2K_S|$.

\begin{figure}[ht]
\includegraphics[scale=.8]{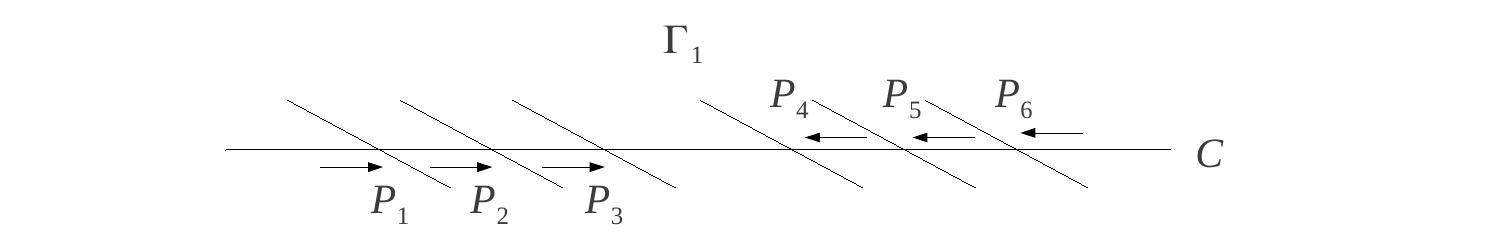}
\caption{Specialization to $\Gamma$ $(n=12)$}
\label{twelve_fig}
\end{figure}

\subsection{Invariants} Denote $\mu_i = \op{mult}_{\Gamma_i/\g{S}}(\ms{C})$. By symmetry, $\mu_1 = \mu_2$. Since $\Gamma_i \equiv 4 C_0 - 2f$, we have the following upper bound:
$$\mu_1 \leq \frac \mu { 2 \cdot 4 } \leq \frac {d} { 2 \cdot 4 \cdot 3 }. \eqno(\sharp)$$
The lower bound from Basic Lemma is:
$$\frac 1 2 \mu_1 \geq \gamma = 7m - 2d.\eqno(\flat)$$
Combining $(\sharp)$ and $(\flat)$, we get:
$$\frac 1 2 \cdot \frac d { 24 } \geq 7m - 2d \Longleftrightarrow 97d \geq 336m.$$
This completes the proof for twelve points.

\bigskip

\section{A Refinement}\label{sec_refinement}

Here we prove a certain refinement of the Main Theorem in the case of $n=10,11$ and $12$ points. We work in the setting of the previous sections. The idea is to show that, under some additional assumptions, the inequality $(\flat)$ can be replaced by a stronger inequality $(\flat\flat)$. First, we have:

\begin{lemma} Let $n=10,11$ or $12$. Consider the degeneration described above for the particular value of $n$. Let $\g{b} := \ms{B}_0 = \ms{O}_C(\sum m P_i - dH)$.

\begin{enumerate}
\item We have $\g{b} - bW \sim d(3W - H)$. In particular, $3\g{b} \sim 3bW$.
\item Assume there is an equality in $(\flat)$, i.e. $\frac 1 2 \mu_1 = \gamma$. Then $\g{b} \sim bW$.
\end{enumerate}
\end{lemma}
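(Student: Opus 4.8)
The plan is to treat the two parts separately: (a) is a direct computation on $C$, and (b) extracts a divisibility statement from the equality case of Corollary \ref{basic_cor}.

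\emph{Part (a).} First I would record $\sum_i\overline{\ms{P}}_{i,0}\sim nW$ on $C$ in each of the three constructions: for $n=10$ this reads $\sum P_i\sim 10A\sim 10W$, using $P_{2i-1}+P_{2i}\sim 2A$ and $A=\ms{O}_C(W)$; for $n=12$, $\sum P_i\sim 4A\sim 12W$; and for $n=11$ every $\overline{\ms{P}}_{i,0}$ equals $W$, so the sum is $11W$ tautologically. Since $\g{b}=\ms{O}_C(\sum m\overline{\ms{P}}_{i,0}-dH)$ and $b=nm-kd=nm-3d$, this gives
$$\g{b}-bW\sim(nm-b)W-dH=3dW-dH=d(3W-H),$$
which is the first assertion; the ``in particular'' follows because $3(3W-H)=9W-3H\sim 0$ by the choice of $W$.

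\emph{Part (b).} The point is that equality in $(\flat)$ forces the inclusion $\ms{O}_{\wt\Gamma_i}(-\wt{\ms{C}})\hookrightarrow\op{Sym}^{\mu_i}\ms{N}_{\wt\Gamma_i/\wt{\g{S}}}$ of the Basic Lemma to have slope equal to $\op{slope}(\op{Sym}^{\mu_i}\ms{N}_{\wt\Gamma_i/\wt{\g{S}}})=\mu_i\cdot\op{slope}(\ms{N}_{\wt\Gamma_i/\wt{\g{S}}})$; such a sub-line-bundle is automatically saturated, since a strictly larger saturation would contradict semistability of $\op{Sym}^{\mu_i}\ms{N}_{\wt\Gamma_i/\wt{\g{S}}}$, and in the $n=11$ case this already forces $\mu_i=2\gamma$ to be even. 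I would then compute $\ms{O}_{\wt\Gamma_i}(-\wt{\ms{C}})$ in two ways. On one hand $\pi$ restricts to an isomorphism $\wt\Gamma_i\xrightarrow{\sim}\Gamma_i$, and restricting $\wt{\ms{C}}\sim\ms{L}_{\g{S}}(\mu,\ms{B},\wh m)=\mu\,\pi^*Z-\pi^*\op{pr}^*\ms{B}-\sum\wh m E_j$ to $\wt\Gamma_i$ — using that $\Gamma_i\cdot Z_0$ is concentrated along $\Gamma_i\cap Z_0=\{P_j\}_{j\in I_i}$, that $\wt\Gamma_i\cap E_j=\emptyset$ for $j\notin I_i$, and that $\mu-\wh m=m$ — gives the forced identity
$$\ms{O}_{\wt\Gamma_i}(-\wt{\ms{C}})\cong q_i^*\g{b}\otimes\ms{O}_{\Gamma_i}\big(-m(\Gamma_i\cap Z_0)\big),$$
where $q_i\colon\Gamma_i\to C$ is the projection ($\deg q_i=1$ for $n=11$, $\deg q_i=4$ for $n=10,12$). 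On the other hand the explicit forms of $\ms{N}_{\wt\Gamma_i/\wt{\g{S}}}$ from the Semistability subsections classify the maximal-slope sub-line-bundles of $\op{Sym}^{\mu_i}\ms{N}_{\wt\Gamma_i/\wt{\g{S}}}$ on the elliptic curve $\wt\Gamma_i$: when $\ms{N}_{\wt\Gamma_i/\wt{\g{S}}}=\ms{O}_{\Gamma_i}(D')\oplus\ms{O}_{\Gamma_i}(D'')$ with $\deg D'=\deg D''$ ($n=10,12$) they are the summands $\ms{O}_{\Gamma_i}(jD'+(\mu_i-j)D'')$; when $\ms{N}_{\wt\Gamma_i/\wt{\g{S}}}$ is indecomposable of degree $1$ ($n=11$), writing it as $\phi_*\ms{L}'$ for a degree-$2$ isogeny $\phi\colon C'\to\Gamma_i$ and a line bundle $\ms{L}'$ on $C'$, one finds $\op{Sym}^{\mu_i}\ms{N}_{\wt\Gamma_i/\wt{\g{S}}}$ splits off a unique maximal-slope line subbundle, isomorphic to $\op{Nm}(\ms{L}')^{\otimes\gamma}$, which equals $\ms{O}_{\Gamma_i}(\gamma(\Gamma_i\cap Z_0))$ up to a $2$-torsion twist. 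For $n=11$, where $\Gamma_i\cap Z_0=\{P_i\}$ maps isomorphically to $W$ and $m+\gamma=b$, matching the two expressions gives $q_i^*(\g{b}-bW)\in\op{Pic}^0(\Gamma_i)[2]$; this is also $3$-torsion by Part (a), hence $0$, hence $\g{b}\sim bW$. For $n=10,12$ the same matching shows $q_i^*(\g{b}-bW)$ lies in $\ker\big(q_i^*\colon\op{Pic}^0(C)\to\op{Pic}^0(\Gamma_i)\big)$ — a group of order $\deg q_i=4$ — so $\g{b}-bW$ is $2$-primary torsion; combined once more with Part (a) and $\gcd(2,3)=1$, we conclude $\g{b}\sim bW$.

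The main obstacle is the bookkeeping behind the matching step for $n=10,12$: there one must track actual line-bundle classes (not merely numerical classes) of $Z_0$, of the fibres, and of the exceptional divisors $E_j$ under restriction to $\wt\Gamma_i$, and then produce the linear-equivalence identity $(m+\mu_i-j)D'+(m+j)D''\sim b\,q_i^*W$ on $\Gamma_i$ (equivalently, that $q_i^*(\g{b}-bW)=0$), which is where the elliptic structure of $\Gamma_i\in|-2K_S|$ and the relation $D'+D''=\Gamma_i\cap C_0$ must be used; pinning down the residual torsion to be $2$-primary, coprime to the $3$-torsion of Part (a), is the crux. Everything else — the equality $\Rightarrow$ maximal slope $\Rightarrow$ saturated reasoning, and the Atiyah-type splitting of $\op{Sym}^{\mu_i}$ of the relevant rank-$2$ bundle on an elliptic curve — is formal.
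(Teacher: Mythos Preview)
Your Part~(a) and the $n=11$ case of Part~(b) follow the paper's argument. One small correction for $n=11$: the maximal-slope sub-line-bundle of $\op{Sym}^{2\gamma}\ms{N}_{\wt\Gamma_1/\wt{\g{S}}}$ is not unique --- by Appendix~\ref{appendix_A} the even symmetric power splits as a direct sum of several line bundles, each of the form $\ms{O}_C(\gamma W)\otimes L$ with $L^{\otimes 2}\cong\ms{O}_C$ --- but your ``up to a $2$-torsion twist'' is exactly the right conclusion, and $2(\g{b}-bW)\sim 0$ follows.

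For $n=10,12$ there is a genuine gap: the matching does \emph{not} yield $q_i^*(\g{b}-bW)=0$, and the identity $(m+\mu_i-j)D'+(m+j)D''\sim b\,q_i^*W$ you propose to establish is in fact false for generic $j$. Concretely for $n=10$: from $(-K_S)|_{\Gamma_1}\cong\ms{O}_{\Gamma_1}$ and $-K_S\sim 2C-f_W$ one gets $2(P_1+P_2)\sim q_1^*W$ on $\Gamma_1$, hence $b\,q_1^*W\sim 2b(P_1+P_2)$; using $m+\gamma=2b$, the matching gives
\[
q_1^*(\g{b}-bW)\;\sim\;(\gamma-j)(P_1-P_2)\quad\text{on }\Gamma_1,
\]
with $j\in\{0,\dots,2\gamma\}$ the unknown summand index. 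Since $P_1-P_2$ is a general point of $\op{Pic}^0(\Gamma_1)$, this need not vanish, and no amount of bookkeeping on $\Gamma_1$ alone will force it to. The paper proceeds differently: it applies the \emph{norm} of the degree-$4$ isogeny $q_1$ (rather than the pullback) to the relation on $\Gamma_1$, obtaining $4\g{b}\sim iP_1+jP_2$ on $C$ with $i+j=4b$; a symmetry argument swapping $P_1\leftrightarrow P_2$ then yields the companion relation $4\g{b}\sim jP_1+iP_2$, and adding these together with $P_1+P_2\sim 2W$ on $C$ gives $8(\g{b}-bW)\sim 0$. Combined with Part~(a) and $\gcd(8,3)=1$, this finishes. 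The ingredients missing from your sketch are thus the norm map and the symmetry step.
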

\begin{proof}
(a) We have $\sum P_i \sim n W$. Therefore
\begin{align*}\g{b} - bW &\sim (nmW - dH) - (nm - 3d)W  \\
&\sim d(3W-H).
\end{align*}
Since $9W \sim 3H$, it follows that $3\g{b} \sim 3bW$.

(b) Assume $\frac 1 2 \mu_1 = \gamma$. By Basic Lemma, there is an injective morphism:
$$\ms{O}_{\wt \Gamma_1}(-\wt{\ms{C}}) \hookrightarrow \op{Sym}^{2\gamma} \ms{N}_{\wt \Gamma_1/\wt{\g{S}}}.$$
Now the idea is to show that the vector bundle on the right hand side decomposes as a direct sum of line bundles of the same degree as $\ms{O}_{\wt \Gamma_1}(-\wt{\ms{C}})$. It will follow that $\ms{O}_{\wt \Gamma_1}(-\wt{\ms{C}})$ is isomorphic to one of the summands. Below we consider each case for $n$ separately.

\smallskip

{\it Case n=10.} Here $\ms{O}_{\wt \Gamma_1}(-\wt{\ms{C}})$ is of degree $2\gamma$. More precisely, since $\ms{O}_{\Gamma_1}(C) \cong \ms{O}_{\Gamma_1}(P_1+P_2)$, we have:
$$\ms{O}_{\wt \Gamma_1}(-\wt{\ms{C}}) \cong \ms{O}_{\Gamma_1}(-m(P_1 + P_2) + \g{b}f).$$
Recall that $\ms{N}_{\wt \Gamma_1/\wt{\g{S}}} \cong \ms{O}_{\Gamma_1}(P_1) \oplus \ms{O}_{\Gamma_1}(P_2)$. Therefore:
$$\op{Sym}^{2\gamma} \ms{N}_{\wt \Gamma_1/\wt{\g{S}}} \cong \bigoplus_{i+j = 2\gamma} \ms{O}_{\Gamma_1}(iP_1 + jP_2).$$
It follows that $\ms{O}_{\wt \Gamma_1}(-\wt{\ms{C}})$ must be isomorphic to one of the summands. Hence $\g{b}f \sim iP_1 + jP_2$ on $\Gamma_1$ for some $i,j$ with $i+j = 4b$. Since $\Gamma_1 \rightarrow C$ is an isogeny of degree 4, it follows that $4\g{b} \sim i P_1 + jP_2$ on $C$. By symmetry, $4\g{b} \sim jP_1 + iP_2$. Therefore, $8\g{b} \sim 4b(P_1+P_2) \sim 8bW$ on $C$. Since $3\g{b} \sim 3bW$ and $gcd(8,3)=1$, it follows that $\g{b} \sim bW$.

\smallskip

{\it Case n=11.} Here $\ms{O}_{\wt \Gamma_1}(-\wt{\ms{C}})$ is of degree $\gamma$. More precisely, since $\ms{O}_{\Gamma_1}(Z_0) \cong \ms{O}_{\Gamma_1}(P_1)$, we find:
$$\ms{O}_{\wt \Gamma_1}(-\wt{\ms{C}}) \cong \ms{O}_{\Gamma_1}(-mP_1 + \g{b}f) \cong \ms{O}_C(-mW + \g{b}).$$
Recall that $\ms{N}_{\wt \Gamma_1/\wt{\g{S}}}$ is indecomposable of degree 1 (determinant $W$). From the results in \mbox{Appendix \ref{appendix_A}}, $\op{Sym}^{2\nu}(\ms{N}_{\wt \Gamma_1/\wt{\g{S}}})$ is a direct sum of line bundles of the form $\ms{O}_C(\nu W + L_i)$ where $L_i^{\otimes 2} \cong \ms{O}_{C}$. We conclude that $\ms{O}_{\wt \Gamma_1}(-\wt{\ms{C}})$ is isomorphic to one of the summands. It follows that $2\g{b} \sim 2bW$. Since $3\g{b} \sim 3bW$ and $gcd(2,3)=1$, we conclude that $\g{b} \sim bW$.

\smallskip

{\it Case n=12.} This is similar to the case of ten points. We leave the details to the reader.

\end{proof}

The following result is a refinement of the Main Theorem. Note that it only applies when $3 \nmid d$.

\begin{proposition}\label{prop_refinement} Let $n=10,11$ or $12$. If $\ms{L}(d,n,m)$ is nonempty and $3\nmid d$, then $\kappa_n \geq 0$ with 
$$
\begin{array}{lcrcrcl}
\kappa_{10} &=& 721d &-& 2280 m &-& 60\\
\kappa_{11} &=& 199d &-& 660 m &-& 33\\
\kappa_{12} &=&  97d &-& 336m &-& 24.
\end{array}
$$
\end{proposition}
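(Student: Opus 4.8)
The plan is to run the same two-estimate argument that proves the Main Theorem, but to improve the lower bound $(\flat)$ by one unit whenever the subbundle coming from the Basic Lemma is forced to be a \emph{subbundle} rather than merely a subsheaf. Concretely, recall from the proof of the Main Theorem that we have $\mu_i = \op{mult}_{\Gamma_i/\g{S}}(\ms{C})$, all equal by symmetry to $\mu_1$, and the two inequalities are the upper bound $(\sharp)$ coming from $\Gamma_i$ lying on the boundary of the effective cone, and the lower bound $(\flat)$, $\tfrac12\mu_1 \geq \gamma$, coming from Corollary \ref{basic_cor}. First I would observe that $(\sharp)$ actually reads $\mu_1 \le \mu/(s\cdot \Gamma_i\!\cdot\! f /f^2\text{-normalization})$ with $\mu=\lfloor d/k\rfloor$, and that when $3\nmid d$ we have $\mu = \lfloor d/3\rfloor < d/3$ strictly, so $(\sharp)$ can be sharpened to an integer-valued statement: e.g.\ for $n=10$, $\mu_1 \le \mu/20$ with $3\mu \le d-1$, giving $60\mu_1 \le d-1$ rather than $60\mu_1\le d$. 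Combined with the \emph{unchanged} $(\flat)$ this already yields $\kappa_n\ge 0$ unless $(\flat)$ is an equality, so the whole proposition reduces to deriving a contradiction in the boundary case $\tfrac12\mu_1=\gamma$.

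So the second, and main, step is to rule out $\tfrac12\mu_1=\gamma$ under the hypothesis $3\nmid d$. Here I would invoke the Lemma just proved: if $\tfrac12\mu_1=\gamma$ then $\g{b}\sim bW$ on $C$, where $\g{b}=\ms{O}_C(\sum mP_i - dH)$ and $b=nm-kd$. But part (a) of that same Lemma says $\g{b}-bW\sim d(3W-H)$, so equality in $(\flat)$ forces $d(3W-H)\sim 0$, i.e.\ $d(3W-H)$ is trivial in $\op{Pic}^0(C)$. Now the point of the parenthetical remark in the Setup for ten points — ``later in Section \ref{sec_refinement} we will require that $3W\nsim H$'' — is precisely that we choose the Weierstrass-type point $W$ so that $3W-H$ is a \emph{nonzero} element of $\op{Pic}^0(C)$; indeed $9W\sim 3H$ only forces $3(3W-H)\sim 0$, so $3W-H$ is either trivial or $3$-torsion, and a generic such $W$ gives a nontrivial $3$-torsion class. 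Then $d(3W-H)\sim 0$ in a group where $3W-H$ has order exactly $3$ forces $3\mid d$, contradicting the hypothesis $3\nmid d$. Hence $(\flat)$ is strict, and feeding the strict integral form of $(\sharp)$ together with $\tfrac12\mu_1 > \gamma$ (equivalently $\mu_1 \ge 2\gamma+1$ by integrality of $\mu_1$ and $\gamma$) back into the combination gives exactly $\kappa_n\ge 0$.

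In more detail for the bookkeeping: for $n=10$ one has $\gamma = 19m-6d$ and $\mu_1\le \mu/20$ with $\mu\le (d-1)/3$; using $\mu_1\ge 2\gamma+1$ wherever equality in $(\flat)$ is excluded — but in fact it is cleaner to keep $(\flat)$ as $\tfrac12\mu_1\ge\gamma$ and instead use the improved $(\sharp)$: $60\mu_1\le d-1$, hence $60\gamma \le 30\mu_1 \le (d-1)/2$, i.e.\ $120(19m-6d)\le d-1$, which rearranges to $721d\ge 2280m+1$; but the claimed $\kappa_{10}$ asks for $721d\ge 2280m+60$, so one genuinely needs the equality case of $(\flat)$ excluded \emph{and} the integrality of $\mu_1$ to push from $+1$ to $+60$. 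The right argument is: $\mu_1$ is an integer, $60\mu_1\le d-1$, and $\mu_1\ge 2\gamma = 2(19m-6d)$; if this last were an equality we'd reach the torsion contradiction above, so $\mu_1\ge 2\gamma+1$ whenever $2\gamma>0$, whence $60(2\gamma+1)\le d-1$... this still only gives a $+61$-type bound, so actually one should track that $\mu_1 \le \lfloor \mu/20\rfloor$ and $\mu=\lfloor d/3\rfloor$ both involve floors, and chase the residue of $d$ mod $3$ and of $\mu$ mod $20$ to land exactly on $60$; alternatively note $721d-2280m$ is an integer $\equiv$ something mod $60$ forcing the gap. The analogous computations for $n=11$ (using $\gamma=10m-3d$, $\mu_1\le\mu/11$, $33\mid$-bookkeeping) and $n=12$ ($\gamma=7m-2d$, $\mu_1\le\mu/8$) are parallel and routine. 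The hard part — and the conceptual heart of the refinement — is the exclusion of equality in $(\flat)$, which rests entirely on the choice $3W\nsim H$ and the divisibility lemma above; once that is in hand, the final inequalities are a matter of carefully tracking floors and residues modulo $60$, $33$, and $24$ respectively.
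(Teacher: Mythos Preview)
Your conceptual core matches the paper exactly: choose $W$ with $9W\sim 3H$ but $3W\nsim H$, observe that part (a) of the preceding Lemma gives $\g{b}-bW\sim d(3W-H)$, so $3\nmid d$ forces $\g{b}\nsim bW$, and then part (b) excludes equality in $(\flat)$. Since $\mu_1$ and $\gamma$ are integers, this yields the strengthened inequality $(\flat\flat)$: $\tfrac12\mu_1\ge\gamma+\tfrac12$. That is precisely the paper's argument.

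Where you go astray is in the bookkeeping. You try to \emph{also} sharpen $(\sharp)$ using $3\nmid d$, obtain a ``$+61$-type bound'', and then propose chasing floors and residues mod $60$, $33$, $24$ to reconcile. None of this is needed. The paper leaves $(\sharp)$ entirely unchanged --- it keeps the real-valued inequality $\mu_1\le d/60$ (resp.\ $d/33$, $d/24$) --- and combines it directly with $(\flat\flat)$. For $n=10$: $2\gamma+1\le\mu_1\le d/60$ gives $120\gamma+60\le d$, i.e.\ $120(19m-6d)+60\le d$, i.e.\ $721d\ge 2280m+60$, which is exactly $\kappa_{10}\ge 0$. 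The cases $n=11,12$ are identical. No floor-tracking or congruence argument is involved.

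Incidentally, your ``$+61$'' bound is \emph{stronger} than the claimed ``$+60$'', so your argument was already complete at that point --- you simply did not recognise that $\kappa_n\ge 1$ implies $\kappa_n\ge 0$. The detour through improving $(\sharp)$ is harmless but obscures the fact that the single improvement $(\flat)\to(\flat\flat)$ already does all the work.
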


\begin{proof}
Fix $W$ so that $9W \sim 3H$ but $3W \nsim H$. Since $3 \nmid d$, part (a) of the lemma implies that $\g{b} \nsim bW$. From part (b), we get:
$$\frac 1 2 \mu_1 \geq \gamma + \frac 1 2. \eqno(\flat\flat)$$
Finally, $(\sharp)$ together with $(\flat\flat)$ imply the desired inequality.
\end{proof}

\begin{corollary} The following linear systems $\ms{L}(d,n,m)$ with \mbox{$v=-1$} are empty, hence non-special:
$$
\begin{array}{rrr|r|rrrr|r|rr}
d	& n	& m	& \chi_{{\bf P}^2}	& \mu	& \epsilon	& b	& \wh m	& \chi_{S}	& \gamma	& \kappa_{n}\\
\hline
1499	&10	&474	&0	&499	&2	&243	&25	&0	&12	&-1 \\
778	&10	&246	&0	&259	&1	&126	&13	&0	&6	&-2 \\
428	&11	&129	&0	&142	&2	&135	&13	&0	&6	&-1 \\
229	&11	&69	&0	&76	&1	&72	&7	&0	&3	&-2 \\ 
215	&12	&62	&0	&71	&2	&99	&9	&0	&4	&-1 \\
118	&12	&34	&0	&39	&1	&54	&5	&0	&2	&-2
\end{array}
$$
\end{corollary}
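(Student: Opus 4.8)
The plan is to read off every one of the six cases directly from Proposition \ref{prop_refinement}. Fix a row $(d,n,m)$ of the table. Since $n\in\{10,11,12\}$, the proposition applies as soon as $3\nmid d$, and this is immediate from the digit sums --- none of $1499, 778, 428, 229, 215, 118$ is divisible by $3$. Suppose, toward a contradiction, that $\ms{L}(d,n,m)$ is nonempty. Then Proposition \ref{prop_refinement} forces $\kappa_n\geq 0$. But evaluating the affine forms $\kappa_{10}=721d-2280m-60$, $\kappa_{11}=199d-660m-33$, $\kappa_{12}=97d-336m-24$ at the six triples produces exactly the values $-1$ or $-2$ recorded in the last column, each of which is negative. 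This contradiction gives $\ms{L}(d,n,m)=\emptyset$ in every case.

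For the non-speciality statement I would check, by a one-line substitution in each row, that $v=d(d+3)/2-nm(m+1)/2=-1$, equivalently $\chi_{{\bf P}^2}=v+1=0$ (the value in the $\chi_{{\bf P}^2}$ column). Once $v=-1$ the expected dimension $\op{max}\{-1,v\}$ equals $-1$, so the expected behaviour of $\ms{L}(d,n,m)$ is precisely to be empty; hence the emptiness established above \emph{is} the assertion of non-speciality, and the proof is complete.

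The remaining columns $\mu,\epsilon,b,\wh m,\chi_S,\gamma$ are not logically required, and I would add a remark that they serve as internal consistency checks: one has $\mu=\lfloor d/3\rfloor$, $\epsilon=d-3\mu\in\{1,2\}$, $b=nm-3d$, $\wh m=\mu-m$; since $C$ is a plane cubic and $\epsilon\geq 1$ one has $h^1(\ms{O}_C(\epsilon H))=0$, so the corollary to Theorem \ref{thm2} gives $\chi_S=\chi_{{\bf P}^2}=0$, in agreement with Lemma \ref{lemma_chi} applied with $g=1$ and $\alpha=n-9$, while the displayed $\gamma$ matches $\gamma=\tfrac{n+9}{\alpha}m-\tfrac{6}{\alpha}d$ from the Invariants step of Section \ref{sec_main}. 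The only real obstacle is arithmetic precision: the cancellations defining $\kappa_n$ are very tight (for the first row $\kappa_{10}=1{,}080{,}779-1{,}080{,}720-60=-1$), so the computations must be carried out exactly --- but there is no conceptual difficulty once Proposition \ref{prop_refinement} is in hand.
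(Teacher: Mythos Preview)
Your proposal is correct and matches the paper's intended argument: the corollary is stated immediately after Proposition \ref{prop_refinement} with no separate proof, and the implicit reasoning is exactly the direct application you describe---verify $3\nmid d$ in each row, observe $\kappa_n<0$, conclude emptiness, and note that $v=-1$ makes emptiness synonymous with non-speciality. Your additional remark that the intermediate columns $\mu,\epsilon,b,\wh m,\chi_S,\gamma$ are consistency checks rather than logical ingredients is also accurate.
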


\begin{remark} The assumption $3 \nmid d$ in the proposition cannot be dropped. For example, consider the nonempty linear system $\ms{L}(57,10,18)$ (with $v=0$, $\kappa_{10} = -3$ and $3\mid d$). Unfortunately, it is not clear to us how to extend the proposition in the case $3 \mid d$. Our discussion will not be complete without mentioning the following interesting open problems: $\ms{L}(2220,10,702)$, $\ms{L}(627,11,189)$ and $\ms{L}(312,12,90)$ (with $v=0$, $\kappa_n = 0$ and $3\mid d$).
\end{remark}

\bigskip

\section{The Remaining Case}\label{sec_last}

Here we prove the Main Theorem in the case when $k \geq 3$, $\alpha$ is even, $\alpha \mid 2n$. This generalizes the case of eleven points (in fact, the proof can be also applied in the case of eight points).

\subsection{Setup} Let $C$ be a smooth plane curve of degree $k$. We will make the following assumption: there is a divisor $W = W_1 + \dots + W_{\frac \alpha 2}$ on $C$, where $W_i$'s are distinct points, such that
$$\tfrac {2k^2}\alpha W \sim kH.$$
Here is one way to construct such a curve. Fix a line $\ell \subset {\bf P}^2$ and let $W_1,\dots,W_{\frac \alpha 2}$ be distinct points on $\ell$. Now, take $C$ to be any smooth curve of degree $k$ which is tangent to $\ell$ to order $\frac{2k}\alpha$ at each of the points $W_i$. It follows that $\frac{2k}\alpha W \sim H$, which satisfies the assumption.

\subsection{First Degeneration} It will be convenient to re-index the $n$ relative points as $\{\ms{P}_{ij}\}$ where $i=1,\dots,\frac {2n}\alpha$ and $j=1,\dots,\frac \alpha 2$. We will construct a relative marked ruled surface $\g{S}(Z,\ms{A},\xi;\{\ms{P}_{ij}\})$ such that:

\begin{itemize}
\item The special fiber $\g{S}_0$ is simply $C \times {\bf P}^1$.
\item The special section $Z_0 = C_0 \cup F_1 \cup \dots \cup F_{\frac \alpha 2}$; here $C_0$ is a horizontal section of $\g{S}_0$ and $F_j$ is the fiber of $\g{S}_0$ above $W_j$, for each $j$.
\item The relative points $\{\ms{P}_{ij}\}$ on $Z$ are such that $\sum \ms{P}_{ij,t} \sim \ms{A}_t + kH$ on $Z_t \cong C$, for general $t$.
\item Each limit point $P_{ij} = \ms{P}_{ij,0}$ is a general point on the fiber $F_j$.
\end{itemize}

The construction generalizes the case of eleven points. Namely, we first choose relative points $\overline{\ms{P}}_{ij}$ in $C \times \Delta$ specializing to $W_j \times \{0\}$ in a general way. Next, let $\ms{A}' = \ms{O}_{C\times\Delta}(\sum \ms{P}_{ij} - kH)$ and $\ms{A} = \ms{A}' \otimes I_{W\times\{0\}}$. It follows that
\begin{align*}
\ms{A}'_0 &\cong \ms{O}_{C}(\tfrac {2n}\alpha W - kH) \cong \ms{O}_C(2W);\\
\ms{A}_0 &\cong \ms{O}_C(W) \oplus \ms{O}_W(W).
\end{align*}
Consider the short exact sequence on $C \times \{0\}$:
$$0 \rightarrow \ms{O}_{C} \rightarrow \ms{O}_C(W) \oplus \ms{O}_C(W) \rightarrow \ms{O}_C(W) \oplus \ms{O}_W(W) \rightarrow 0$$
(Note that $h^0(C,\ms{O}_C(W)) = 1$, so the sequence is unique). By Prop. \ref{prop_smooth}, the sequence can be extended to
$$0 \rightarrow \ms{O}_{C\times \Delta} \rightarrow \ms{E} \rightarrow \ms{A} \rightarrow 0$$
over $C \times \Delta$, where $\ms{E}$ is locally free. Finally, we take $\g{S} = {\bf P}(\ms{E})$ and $Z = {\bf P}(\ms{A})$. It follows that $\g{S}_0 \cong {\bf P}(\ms{O}_C \oplus \ms{O}_C) \cong C \times {\bf P}^1$. We take $\ms{P}_{ij}$ to be the strict transform of the $\overline{\ms{P}}_{ij}$ on the blowup $Z \rightarrow C \times \Delta$ at $W \times \{0\}$.

Next, we will distinguish between two cases: $\alpha=2$ and $\alpha \geq 4$.

\subsection{Semistability $(\alpha=2)$} Let $\Gamma_i$ be the horizontal section of $\g{S}_0$ through $P_{i,1}$. Just as in the case of eleven points, we can show that the conormal bundle $\ms{N}_{\wt \Gamma_i/\wt{\g{S}}}$ is semistable of slope 1/2.

\subsection{Second Degeneration $(\alpha \geq 4)$} In this case, we perform another degeneration on the trivial ruled surface $S = C \times {\bf P}^1$.
Fix $\frac {2n}\alpha$ general horizontal sections $\Gamma_1,\dots,\Gamma_{\frac{2n}\alpha}$ of $S$. Let $P_{ij} = \Gamma_i \cap F_j$ for $i=1,\dots,\frac{2n} \alpha$ and $j=1,\dots,\frac \alpha 2$. Next, we specialize the $n$ relative points $\ms{P}_{ij}$ to $P_{ij}$ by ``sliding'' them along the corresponding fibers $F_j$, in a general way.

\subsection{Semistability $(\alpha \geq 4)$} Denote by $\wt{S\times\Delta}$ the blowup of $S \times \Delta$ at the relative points $\ms{P}_{ij}$ constructed in the previous step. Now, ${\bf P}(\ms{N}_{\wt \Gamma_i/ \wt{S \times \Delta}})$ is obtained from ${\bf P}(\ms{N}_{\Gamma_i/S \times \Delta}) = {\bf P}(\ms{O}_C \oplus \ms{O}_C)$ by applying $\alpha/2$ elementary transforms at general points on the fixed fibers through $W_1,\dots,W_{\frac \alpha 2}$. Since $\alpha/2 \geq 2$, it follows that the resulting vector bundle is semistable of slope $\alpha/4$ (the proof is similar to that of Lemma \ref{lemma_ss_fiber}).

\subsection{Invariants} The computation of invariants was carried out in Section \ref{sec_main}. This completes the proof of the Main Theorem.

\bigskip

\appendix
\section{The Indecomposable Elliptic Ruled Surface of Degree 1}\label{appendix_A}

Below we summarize some facts about the indecomposable elliptic ruled surface of degree 1. Our references are \cite{A} and (\cite{H}, Chapter V.2).

Let $C$ be an elliptic curve. Let $\ms{E}$ be an indecomposable rank 2 vector bundle of degree 1 on $C$. Then, $\ms{E}$ arises as the unique nontrivial extension
\begin{align}\label{ext1}
0 \rightarrow \ms{O}_C \rightarrow \ms{E} \rightarrow A \rightarrow 0
\end{align}
where $A = \op{det}(\ms{E})$.

Let us compute the symmetric powers of $\ms{E}$. By (\cite{A}, Lemma 22 on p.439), we have: $$\ms{E}^* \otimes \ms{E} \cong \ms{O}_C \oplus L_1 \oplus L_2 \oplus L_3$$ where the $L_i$ are the nontrivial line bundles with $L_i^{\otimes 2} \cong \ms{O}_C$. Also, by (\cite{A}, Cor. to Thm. 7 on p.434), we have $\ms{E}\otimes L_i \cong \ms{E}$ and $\ms{E}^* \cong \ms{E} \otimes A^{-1}$. Finally, we have the Clebsch-Gordan formula (\cite{A}, p.438) for a rank 2 vector bundle:
$$\op{Sym}^m \ms{E} \otimes \ms{E} \cong \op{Sym}^{m+1} \ms{E} \oplus (A \otimes \op{Sym}^{m-1} \ms{E}).$$
Using the above, we find:
\begin{align*}
\op{Sym}^2 \ms{E} &\cong A \otimes (L_1 \oplus L_2 \oplus L_3) \\
\op{Sym}^3 \ms{E} &\cong A \otimes (\ms{E} \oplus \ms{E}) \\
\op{Sym}^4 \ms{E} &\cong A \otimes (\ms{O}_C^{\oplus 2} \oplus L_1 \oplus L_2 \oplus L_3) \\
\dots
\end{align*}

In general, if $m=2k$ is even, $\op{Sym}^m \ms{E}$ decomposes as a sum of line bundles that are isomorphic to $A^{\otimes k}$ or $A^{\otimes k} \otimes L_i$. If $m=2k+1$ is odd, $\op{Sym}^m \ms{E}$ decomposes as a sum of $k+1$ copies of $A^{\otimes k} \otimes \ms{E}$.

Next, consider the ruled surface $S = {\bf P}(\ms{E})$ together with the projection $\pi:S \rightarrow C$. We identify $C$ with the unique section  of $|\ms{O}_{S}(1)|$. The anticanonical class of $S$ is $$-K_S \sim 2C - \pi^*(A).$$

Note that $$K_S^2 = 0.$$

We have:

\begin{proposition}\label{prop_elliptic} Let $\ms{E}$ be the indecomposable vector bundle of rank 2 and degree 1, $\op{det}(\ms{E})=A$. Consider the ruled surface $S = {\bf P}(\ms{E})$.
\begin{enumerate}
\item For any $x \in \op{Pic}^0(C)$, there is a unique curve $C_x \sim C + \pi^*(x)$ on $S$.
\item There are precisely 3 curves $\Gamma_i$, $i=1,2,3$, on $S$ that are numerically equivalent to $-K_S$. Their rational equivalence classes are given by $\Gamma_i \sim 2C - \pi^*(A + L_i)$ for each nontrivial line bundle $L_i$ with $L_i^{\otimes 2} \cong \ms{O}_C$.
\item The linear system $|-2K_S|$ sweeps a base-point free pencil on $S$. There are 3 nonreduced sections, namely $2\Gamma_i$, $i=1,2,3$. Any other section is a smooth elliptic curve $\Gamma$ isomorphic to $C$ (the natural projection $\Gamma \rightarrow C$ being the usual multiplication-by-2 map). 
\end{enumerate}
\end{proposition}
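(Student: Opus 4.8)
The plan is to compute everything by pushing forward along $\pi$, using $\pi_*\ms{O}_S(1)\cong\ms{E}$ (with our $\op{Proj}(\op{Sym})$ convention) together with the decompositions of $\op{Sym}^m\ms{E}$ recalled above, and two standard facts about the elliptic curve $C$: a degree-$0$ line bundle has $h^0$ equal to $1$ or $0$ according as it is or is not trivial, and (Atiyah) every indecomposable bundle on $C$ is semistable, so by Riemann--Roch and Serre duality $h^0(\ms{E}\otimes M)=0$ whenever $\ms{E}\otimes M$ is indecomposable of negative degree, while $h^0(\ms{E}\otimes M)=1$ whenever it is indecomposable of degree $1$. For part (a) the projection formula gives $\pi_*\ms{O}_S(C+\pi^*x)\cong\ms{E}\otimes\ms{O}_C(x)$, again indecomposable of rank $2$ and degree $1$, so $h^0(\ms{O}_S(C+\pi^*x))=1$; the unique effective member $C_x$ has no fibre component (a section vanishing along $\pi^{-1}(p)$ would factor through $\ms{E}\otimes\ms{O}_C(x-p)$, of degree $-1$), and $C_x\cdot f=1$ then forces $C_x$ to be a section in the class $C+\pi^*x$, uniqueness being $h^0=1$.

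For part (b), an effective divisor numerically equivalent to $-K_S\equiv 2C-f$ lies in a class $2C+\pi^*z$ with $\deg z=-1$, and using $\op{Sym}^2\ms{E}\cong A\otimes(L_1\oplus L_2\oplus L_3)$ one gets $h^0(\ms{O}_S(2C+\pi^*z))=\sum_{i=1}^3 h^0(A\otimes L_i\otimes\ms{O}_C(z))$, each summand of degree $0$. Since $L_1,L_2,L_3$ are distinct, exactly three classes carry an effective (necessarily unique) member, namely $\Gamma_i\sim 2C-\pi^*(A+L_i)$ (using $L_i\cong L_i^{-1}$). The same negative-degree vanishing rules out a fibre component or a splitting into two sections, so each $\Gamma_i$ is reduced and irreducible; adjunction gives $p_a(\Gamma_i)=1$, and $\Gamma_i$ cannot be singular, since it maps finitely onto the elliptic curve $C$ (degree $\Gamma_i\cdot f=2$) and so admits no rational normalisation. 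Hence the $\Gamma_i$ are smooth elliptic.

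For part (c), from $\op{Sym}^4\ms{E}\cong A^{\otimes2}\otimes(\ms{O}_C^{\oplus2}\oplus L_1\oplus L_2\oplus L_3)$ one gets $h^0(\ms{O}_S(-2K_S))=2$, so $|-2K_S|$ is a pencil; since $2L_i\cong\ms{O}_C$ it contains $2\Gamma_1,2\Gamma_2,2\Gamma_3$. The $\Gamma_i$ are pairwise disjoint (distinct irreducible curves with $\Gamma_i\cdot\Gamma_j=0$), so the pencil is base-point free and defines a morphism $\phi\colon S\to{\bf P}^1$. A short numerical bookkeeping identifies the reducible or non-reduced members: writing such a member $D=\sum a_jD_j$, one checks first that $D$ has no vertical component, and then that the inequality $q_j\ge -p_j/2$ for an effective irreducible $D_j\equiv p_jC+q_jf$ (semistability of $\op{Sym}^{p_j}\ms{E}$) together with $\sum a_jp_j=4$ and $\sum a_jq_j=-2$ forces $q_j=-p_j/2$ for every $j$, hence $D_j\equiv -K_S$ and so $D_j=\Gamma_i$ by (b), hence (as $\Gamma_i+\Gamma_{i'}\not\sim-2K_S$ for $i\neq i'$) $D=2\Gamma_i$. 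Thus the only reducible or non-reduced members are $2\Gamma_1,2\Gamma_2,2\Gamma_3$, and every other member $\Gamma$ is reduced and irreducible with $p_a(\Gamma)=1$, hence smooth for the same reason as in (b); finally $\pi|_\Gamma\colon\Gamma\to C$ is an unramified degree-$4$ map of elliptic curves by Riemann--Hurwitz.

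I expect the main obstacle to be the last assertion, that $\Gamma\cong C$ with $\pi|_\Gamma$ equal to the multiplication-by-$2$ map rather than merely some elliptic curve $4$-isogenous to $C$. The plan there is to base-change the whole configuration along $[2]\colon C\to C$: using the Atiyah relations $\ms{E}\otimes L_i\cong\ms{E}$ and $[2]_*\ms{O}_C\cong\ms{O}_C\oplus L_1\oplus L_2\oplus L_3$ one finds $[2]_*[2]^*\ms{E}\cong\ms{E}^{\oplus4}$; this pins down $[2]^*\ms{E}$ well enough to understand ${\bf P}([2]^*\ms{E})$ and the pulled-back pencil, whose generic member acquires a section over $C$, and descending that section back along the degree-$4$ cover identifies $\Gamma$ with the graph of $[2]$. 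The delicate point is keeping track of which of the several degree-$4$ étale covers of $C$ actually occurs, and it is precisely here -- through those Atiyah relations -- that the indecomposability of $\ms{E}$ is essential; alternatively one could invoke Atiyah's description of the relevant pullbacks directly.
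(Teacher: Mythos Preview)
Your proof is correct. For parts (a), (b), and the bulk of (c)---computing $h^0$ via pushforward and the $\op{Sym}^m\ms{E}$ decompositions, then checking irreducibility and smoothness---you follow essentially the paper's line (which is quite terse), filling in considerably more detail than the paper supplies.

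The genuine divergence is in the final assertion that $\Gamma\cong C$ with $\pi|_\Gamma=[2]$. The paper argues via the three multiple fibres: first, every irreducible member of $|-2K_S|$ is isomorphic to a fixed $\Gamma$ (the $j$-invariant is constant since the pencil has no singular fibres and ${\bf P}^1$ admits no nonconstant map to $\mathbb{A}^1_j$); second, $\Gamma$ admits a $2:1$ cover of each $\Gamma_i$, and one identifies this map as the isogeny dual to $\Gamma_i\to C$, whence $\Gamma\cong C$. Unwinding this, the three factorizations $\Gamma\to\Gamma_i\to C$ of $\pi|_\Gamma$ give three distinct order-$2$ subgroups of $\ker(\pi|_\Gamma)$, forcing that kernel to be $(\mathbb{Z}/2)^2=\Gamma[2]$. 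Your route via base-change along $[2]$ is different and arguably cleaner: the key point is that pulling back $\op{Sym}^2\ms{E}\cong A\otimes(L_1\oplus L_2\oplus L_3)$ and using $[2]^*L_i\cong\ms{O}_C$ gives $\op{Sym}^2[2]^*\ms{E}\cong([2]^*A)^{\oplus3}$, which (by Krull--Schmidt) forces $[2]^*\ms{E}\cong M\oplus M$ and hence $S\times_C C\cong C\times{\bf P}^1$; then every member of the pulled-back pencil lies in $|\ms{O}(0,4)|$ and is a sum of four horizontal sections, so $p^*\Gamma$ splits into four copies of $C$, each mapping isomorphically to $\Gamma$ with $\pi\circ p=[2]\circ\pi'$. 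Your approach trades the local analysis at multiple fibres for a single global splitting, and makes the identification of $\pi|_\Gamma$ with $[2]$ immediate rather than a consequence of counting factorizations. One small remark: your intermediate relation $[2]_*[2]^*\ms{E}\cong\ms{E}^{\oplus4}$ is correct but does not by itself determine $[2]^*\ms{E}$; the $\op{Sym}^2$ pullback is the sharper tool here, and is presumably what you have in mind under ``Atiyah's description of the relevant pullbacks.''
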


\begin{proof} 

a) This follows from the fact that $\ms{E} \otimes \pi^{*}(x)$ is the unique indecomposable rank 2 vector bundle with determinant $A + 2x$.

b) This follows from $\op{Sym}^2 \ms{E} \cong A \otimes (L_1 \oplus L_2 \oplus L_3).$

c) We have $\op{Sym}^4 \ms{E} \cong A^{\otimes 2} \otimes (\ms{O}_C^{\oplus 2} \oplus L_1 \oplus L_2 \oplus L_3).$ Therefore $h^0(-2K_S) = h^0(\op{Sym}^4 \ms{E} \otimes A^{-2}) = 2$, i.e. $|-2K_S|$ sweeps a pencil on $S$. Next, let $\Gamma$ any section of $|-2K_S|$ other than $2\Gamma_i$, $i=1,2,3$. From part b), $\Gamma$ is irreducible. Since $\Gamma$ is of arithmetic genus 1, it follows that $\Gamma$ is a smooth elliptic curve. One can show that $\Gamma$ is isomorphic to $C$ as follows. First, one shows that every irreducible section of $|-2K_S|$ is isomorphic to the fixed section $\Gamma$. Next, one checks that, for any $i=1,2,3$, $\Gamma$ admits a 2:1 cover to $\Gamma_i$. It follows that $\Gamma\rightarrow \Gamma_i$ is the isogeny dual to $\Gamma_i \rightarrow C$.
\end{proof}

\bigskip

\section{Some Continued Fractions}\label{appendix_B}
Let $n = k^2 + \alpha$ where $k>0$ and $\alpha>0$. Consider the matrix
$$M_1 =  \left[
\begin{array}{cc}
k & n \\
1 & k \\
\end{array}
\right].
$$
For any positive integer $i$, define
$$M_i = \left[
\begin{array}{cc}
p_i & q_i \\
r_i & p_i \\
\end{array}
\right] = \alpha^{-\lfloor i/2 \rfloor}(M_1)^i.$$
Note that $\det (M_{2i-1}) = -\alpha$ and $\det (M_{2i}) = 1$.

For example,
\begin{align*}
p_1 = k;\ \ \ q_1 = n;\ \ \ r_1 = 1;
\end{align*}
and
\begin{align*}
p_2 = \frac{n+k^2}\alpha;\ \ \ q_2 = \frac{2nk} \alpha;\ \ \ r_2 = \frac{2k}\alpha.
\end{align*}

The ratios $p_i/r_i$ and $q_i/p_i$ have natural expansions as continued fractions approximating $\sqrt{n}$ (the later are palindromic). In particular,
$$\frac{q_2}{p_2} = c^{(1)}_n \ \ \ \ \mbox{and} \ \ \ \ \ \frac{q_4}{p_4} = c^{(2)}_n$$
are precisely the constants in Theorem \ref{thm1} and the Main Theorem.

\bigskip

\bigskip

\begin{lemma}\label{lemma_B} Let $d$ and $m$ be any real numbers.

\begin{enumerate}
\item The following are equivalent:
$$\frac \alpha 2 \frac d {p_1} \geq q_1 m - p_1 d\ \ \ \Longleftrightarrow\ \ \  p_2 d - q_2 m \geq 0.$$

\item The following are equivalent:
$$\frac 1 2 \frac d { q_2 }  \geq p_2 m - r_2 d\ \ \ \Longleftrightarrow\ \ \ p_4 d - q_4 m \geq 0.$$
\end{enumerate}

\end{lemma}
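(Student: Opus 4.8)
The plan is to verify both equivalences by direct computation, using only the explicit entries of $M_1$, $M_2$, $M_4$ recorded in the appendix together with the defining relation $n=k^2+\alpha$ (equivalently $\det M_1 = k^2-n = -\alpha$). The one structural fact I will invoke is that $M_4 = \alpha^{-2}M_1^4 = (\alpha^{-1}M_1^2)^2 = M_2^2$, so that squaring $\left[\begin{smallmatrix} p_2 & q_2 \\ r_2 & p_2\end{smallmatrix}\right]$ gives $p_4 = p_2^2 + q_2 r_2$ and $q_4 = 2p_2 q_2$; the one numerical fact is $\det M_2 = p_2^2 - q_2 r_2 = \alpha^{-2}(\det M_1)^2 = \alpha^{-2}(k^2-n)^2 = 1$, which is exactly the remark $\det M_{2i}=1$ for $i=1$. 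Since $k>0$ and $\alpha>0$, the multipliers $p_1 = k$ and $q_2 = 2nk/\alpha$ used below are strictly positive, so clearing denominators never reverses an inequality; this is the only sign bookkeeping needed.

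For part (a), I would clear denominators in $\tfrac{\alpha}{2}\tfrac{d}{k}\ge nm-kd$ by multiplying through by $2k$, obtaining $\alpha d \ge 2knm - 2k^2 d$, hence $(\alpha + 2k^2)d \ge 2knm$. Now $\alpha + 2k^2 = (k^2+\alpha)+k^2 = n+k^2 = \alpha p_2$ and $2knm = \alpha q_2 m$, so dividing by $\alpha$ gives precisely $p_2 d - q_2 m \ge 0$. All steps are reversible, so this is the asserted equivalence.

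For part (b), I would clear denominators in $\tfrac12\tfrac{d}{q_2}\ge p_2 m - r_2 d$ by multiplying by $2q_2$, obtaining $d \ge 2p_2 q_2 m - 2q_2 r_2 d$, hence $(1 + 2q_2 r_2)d \ge 2p_2 q_2 m$. The right-hand coefficient is $q_4 = 2p_2 q_2$, while the left-hand coefficient, using $\det M_2 = 1$ in the form $p_2^2 - q_2 r_2 = 1$, equals $1 + 2q_2 r_2 = (p_2^2 - q_2 r_2) + 2q_2 r_2 = p_2^2 + q_2 r_2 = p_4$. Thus the inequality becomes $p_4 d - q_4 m \ge 0$, again through reversible steps.

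I expect essentially no obstacle here: once the identity $M_4 = M_2^2$ and the determinant computation $\det M_2 = 1$ are in hand, each part reduces to a single line of rearrangement, and the only thing to be careful about is positivity of the multipliers. If one wanted a uniform statement, the same two facts (plus the elementary observation that $q_i/r_i = n$ for all $i$, which follows because $M_1^i$ lies in the span of the identity and $\left[\begin{smallmatrix}0 & n\\ 1 & 0\end{smallmatrix}\right]$) show more generally that $\tfrac12\tfrac{d}{q_{2i}}\ge p_{2i}m - r_{2i}d$ is equivalent to $p_{2i+2}d - q_{2i+2}m\ge 0$; but since the lemma needs only $i=0$ and $i=1$, I would mention this only as a remark rather than proving it.
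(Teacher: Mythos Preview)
Your proof is correct and follows essentially the same route as the paper: multiply through by $2p_1$ (resp.\ $2q_2$), then use $\alpha = q_1 r_1 - p_1^2$ (resp.\ $1 = p_2^2 - q_2 r_2$) together with the squaring relation $M_4 = M_2^2$ to identify the resulting coefficients with $p_2,q_2$ (resp.\ $p_4,q_4$). Your explicit remark that $p_1,q_2>0$ so the inequality direction is preserved is a detail the paper leaves implicit.
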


\begin{proof}
(a) Multiply both sides by $2p_1$ and substitute $\alpha = -p_1^2 + q_1 r_1$:
$$(\alpha + 2p_1^2) d \geq 2p_1q_1 m \Longrightarrow \underbrace{(p_1^2 + q_1r_1)}_{\alpha p_2} d \geq \underbrace{2p_1q_1}_{\alpha q_2} m.$$

(b) Multiply both sides by $2q_2$ and substitute $1 = p_2^2 - q_2 r_2$:
$$(1 + 2q_2 r_2) d \geq 2p_2 q_2 m \Longrightarrow \underbrace{(p_2^2 + q_2 r_2)}_{p_4} d \geq \underbrace{2p_2 q_2}_{q_4} m.$$

\end{proof}

\bigskip

\bibliographystyle{amsplain}

\begin{thebibliography}{10}

\addcontentsline{toc}{section}{References}

\bibitem{A} {M. Atiyah}, {\it Vector bundles over an elliptic curve,} Proc. Lond. Math. Soc. (3) 7 (1957), 414--452.

\bibitem{CD}{C. Ciliberto, O. Dumitrescu, R. Miranda, J. Ro\'e}, {\it Emptiness of homogeneous linear systems with ten general base points}, ``Classification of Algebraic Varieties,'' EMS Series of Congress Reports (2011), 189--195.

\bibitem{CM1}{C. Ciliberto, R. Miranda,} {\it Homogeneous interpolation on ten points,} J. Algebraic Geom. 20 (2011), no. 4, 685--726.

\bibitem{CM2}{C. Ciliberto, R. Miranda,} {\it Linear systems of plane curves with base points of equal multiplicity,} Trans. Amer. Math. Soc. 352 (2000), no. 9, 4037--4050.

\bibitem{E}{T. Eckl,} {\it Ciliberto-Miranda degenerations of ${\mathbb {CP}}^2$ blown up in 10 points,} J. Pure Appl. Algebra 215 (2011), no. 4, 672--696.

\bibitem{F}{R. Friedman}, ``Algebraic Surfaces and Holomorphic Vector bundles,'' Springer, 1998.

\bibitem{H} {R. Hartshorne}, ``Algebraic Geometry,'' Springer, 1977.

%\bibitem{M1}{M. Maruyama}, ``On classification of ruled surfaces,'' Kyoto Univ., Lectures in Math. 3, Tokyo (1970).

\bibitem{M2}{M. Maruyama}, {\it Openness of a family of torsion free sheaves,} J. Math. Kyoto Univ. 16 (1976), no. 3, 627--637.

\bibitem{N}{M. Nagata}, {\it On the fourteenth problem of Hilbert,} Amer. J. Math. 81 (1959), 766--772.

%\bibitem{N2}{M. Nagata}, {\it On rational surfaces II,} Memoirs of the College of Science, Univesity of Kyoto, Series A, Vol. 33, Mathematics no. 2, (1960), 271--293.

\bibitem{PE}{I. Petrakiev}, {\it Multiple points in ${\bf P}^2$ and degenerations to elliptic curves,} Proc. Amer. Math. Soc. 137 (2009), \mbox{no. 1}, 65--71.

%\bibitem{POO}{A.J. van der Poorten}, {\it Symmetry and folding of continued fractions,} J. Th\'eor. Nombres Bordeaux 14 (2002), \mbox{no. 2}, 603--611.

\bibitem{PO}{J. Le Potier}, ``Lectures on Vector Bundles,'' Cambridge Univ. Press (1997).

\end{thebibliography}

\end{document}